\newtheorem{theorem}{Theorem}[section]
\newtheorem{lemma}[theorem]{Lemma}
\newtheorem{corollary}[theorem]{Corollary}
\theoremstyle{definition}
\theoremstyle{remark}
\newtheorem{remark}[theorem]{\bf{Remark}}
\numberwithin{equation}{section}
\newcommand{\vertiii}[1]{{\left\vert\kern-0.25ex\left\vert\kern-0.25ex\left\vert #1 
    \right\vert\kern-0.25ex\right\vert\kern-0.25ex\right\vert}}
\newcommand{\mfn}{\mathbf n}
\newcommand{\D}{\mathbb{D}}
\newcommand{\g}{\gamma}
\newcommand{\p}{\phi}
\renewcommand{\a}{\alpha}
\renewcommand{\o}{\omega}
\begin{document}

\title[Generalized composition operator] {{ Generalized complex symmetric composition operators with applications }}
\author[V. Allu] {Vasudevarao Allu}
\author[S. Sahoo] {Satyajit Sahoo}

\address{Vasudevarao Allu, Department of Mathematics, 
	School of Basic Sciences, Indian Institute of Technology Bhubaneswar, Odisha 752050, India.}
\email{avrao@iitbbs.ac.in}

\address{Satyajit Sahoo, Department of Mathematics, 
	School of Basic Sciences, Indian Institute of Technology Bhubaneswar, Odisha 752050, India.}
\email{satyajitsahoo2010@gmail.com, ssahoomath@gmail.com}

\thanks{
Some part of this work was originally conceived when the second author was a postdoctoral fellow at NISER Bhubaneswar, Odisha, India during 2024. The second author is thankful to NISER Bhubaneswar for providing the necessary facilities to carry out this work and also expresses his gratitude to Prof. Anil Kumar Karn.
}

\subjclass[2020]{ 47B33, 47B15, 47A12, 47B32}
\keywords{Composition operator, differentiation  operator, complex symmetric operators, conjugations, reproducing kernel}

\date{}
\maketitle
\begin{abstract}
	
We characterize the weighted composition-differentiation operators $D_{\mfn,\psi,\varphi}$ acting on $\mathcal{H}_\gamma(\mathbb{D}^d)$ over the polydisk $\mathbb{D}^d$ which are complex symmetric with respect to the conjugation $\mathcal{J}$.
	We obtain necessary and sufficient conditions for $D_{\mfn,\psi,\varphi}$ to be self-adjoint. We also investigate complex symmetry of generalized weighted composition differentiation  operators $M_{n, \psi, \varphi}=\displaystyle\sum_{j=1}^{n}a_jD_{j,\psi_j, \varphi},$
(where $a_j\in \mathbb{C}$ for $j=1, 2, \dots, n$) on the reproducing kernel Hilbert space $\mathcal{H}_\gamma(\mathbb{D})$ of analytic functions on the unit disk $\mathbb{D}$ with respect to a weighted composition conjugation $C_{\mu, \xi}$. Further, we discuss the structure of self-adjoint linear composition differentiation operators. Finally, the convexity of the Berezin range of composition operator on $\mathcal{H}_\gamma(\mathbb{D})$ are investigated. Additionally, geometrical interpretations have also been employed.
\end{abstract}

%

\tableofcontents

\section{Introduction and motivation}
The general study of complex symmetry has origins that can be traced back to both operator theory and complex analysis, and was initiated by Garcia and Putinar \cite{Garcia_Putinar_2006, Garcia_Putinar_2007}. Thereafter, there has been an extensive research carried out on complex symmetry (see \cite{Garcia_Prodan_Putinar_2014} and the references therein). In the past decade, complex symmetric operators have become particularly important
in both theoretical aspects and applications point of view. Renewed interest in non-Hermitian quantum mechanics and spectral analysis of certain complex symmetric operators has brought forth more research into complex symmetry. The present results show that bounded complex symmetric operators are very diverse, which include the Volterra integration operator, normal operator, compressed Toeplitz operators, etc. Recently, in \cite{Mleczko_Szwedek_2018}, some interpolation properties of complex symmetric operators on Hilbert spaces, with an application to complex symmetric Toeplitz operators, have been studied. \\
\par
A systematic study, at the abstract level, of complex symmetric operators was undertaken in \cite{Garcia_Putinar_2006, Garcia_Putinar_2007, Garcia_Hammond, Hai_PA_2020, Hai_Putinar_JDE_2018}. For applications to mathematical physics	and connections to the theory of operators in an indefinite metric space, we refer to the survey \cite{Garcia_Prodan_Putinar_2014}.
	 Throughout this article $\mathcal{H}$ is a separable complex Hilbert space. An anti-linear operator $\mathcal{C}: \mathcal{H}\rightarrow \mathcal{H}$ is called a {\it conjugation} if it is both involutive ({\it i.e.,} $\mathcal{C}^2=I$ ) and isometric ({\it i.e.,} $\|\mathcal{C}x\|=\|x\|,~ x\in \mathcal{H}$). In the presence of a conjugation $\mathcal{C}$, the inner product of the underlying Hilbert space induces a bounded, complex bilinear symmetric form $\langle x,y \rangle=\langle x, \mathcal{C}y\rangle,~ x, y\in \mathcal{H}$. Symmetry of linear transforms with respect to this bilinear form is the main subject of our study. For any conjugation $\mathcal{C}$, there is an orthonormal basis $\{e_n\}_{n=0}^\infty$ for $\mathcal{H}$ such that $\mathcal{C}e_n=e_n$ for all $n$ (see \cite{Garcia_Putinar_2006} for more details). Let ${\mathcal{L}}(\mathcal{H})$ be the $C^*$-algebra of all bounded linear operators on a complex Hilbert space $\mathcal{H}$ with inner product $\langle \cdot,\cdot\rangle$ and the corresponding norm $\|.\|$.      
     An operator $T\in \mathcal{L(H)}$ is said to be {\it complex symmetric} if there exists a conjugation $\mathcal{C}$ on $\mathcal{H}$ such that $T=\mathcal{C}T^*\mathcal{C}$. In this case, we say that $T$ is {\it complex symmetric} with conjugation $\mathcal{C}$. This concept is due to the fact that $T$ is a complex symmetric operator if, and only if, it is unitarily equivalent to a symmetric matrix with complex entries, regarded as an operator acting on $l^2$-space of the appropriate dimension (see \cite{Garcia_CM_2006}). The class of complex symmetric operators is unexpectedly large. Complex symmetric weighted composition operators on several Hilbert spaces of analytic functions have been studied in \cite{Jung_Kim_Ko_Lee_JFAA_2014, Lim_Khoi_JMAA_2018}. Garcia and Hammond \cite{Garcia_Hammond} have identified new classes of complex symmetric (weighted) composition operators on weighted Hardy spaces. Recently, Han and Wang have considered complex symmetric generalized weighted composition operator on Hardy space $\mathcal{H}^2(\mathbb{D})$ and Bergman space $\mathcal{L}_a^2(\mathbb{D})$ in \cite{Han_Wang_JMAA_2021,Han_Wang_CAOT_2021} respectively. Very recently, Bhuia {\it et al.} \cite{Bhuia_Pradhan_Sarkar_2025} have characterized complex symmetry of a class of composition operators. Further, they constructed a conjugation via a unitary weighted composition operator
and discussed the symmetricity of Toeplitz operators corresponding to composition-based conjugations.

\subsection{One-variable setting}
Let $\mathbb{D}$ be the open unit disk in the complex plane $\mathbb{C}$. A reproducing kernel Hilbert space (RKHS) on $\mathbb{D}$ over $\mathbb{C}$ is a space for which all linear evaluation functionals on $\mathbb{D}$ are bounded. Given any $w\in \mathbb{D}$, it follows from Riesz representation theorem that there exists a unique vector $K_w$ in the RKHS such that $f(w)=\langle f, K_w \rangle$ for every $f$ in the RKHS. We recall $K_w$ the reproducing kernel for the point $w$. For a detailed discussion of a RKHS we refer to \cite{Paulsen_Raghupathi_2016}. Let $\mathcal{H}_\gamma (\gamma\in \mathbb{N})$ be the Hilbert space of holomorphic functions on $\mathbb{D}$ with the reproducing kernel $$ K_w(z)=(1-\overline{w}z)^{-\gamma}.$$ Note that for $\gamma=1~ (\mbox{resp}. ~ \gamma=2)$, $\mathcal{H}_\gamma$ reduces to the classical  Hardy space (resp. Bergman space). In fact, such a space was studied in \cite{Le_JMAA_2012} for several complex variables and for $\gamma>0$. However, we will restrict our study to just $\gamma \in \mathbb{N}$. This restriction $\gamma \in \mathbb{N}$ is essential, as in general, for any $a, b \in \mathbb{C}$ and $\gamma>0$, $(ab)^\gamma\neq a^\gamma b^\gamma$, while the equality holds when $\gamma \in \mathbb{N}$. From the theory of reproducing kernel Hilbert space, $\mathcal{H}_\gamma$ is the completion of the linear span of $\{ K_w: w\in \mathbb{D}\}$ with respect to the inner product defined by $\langle K_w, K_z \rangle=K_w(z)$. For any $z, w\in \mathbb{D}$, the norm of $K_w(z)$ is given by $$\|K_w\|=\langle K_w, K_w \rangle=(1-|w|^2)^{-\frac{\gamma}{2}},$$   
the normalized reproducing kernel  for the point $w$, denoted by $k_w$, is given by $$\widehat{k}_w(z)=\frac{K_w(z)}{\|K_w\|}=\frac{(1-|w|^2)^{\frac{\gamma}{2}}}{(1-\overline{w}z)^{\gamma}}.$$      
Moreover, if we let $$K_{w}^{[n]}(z)=\frac{\Gamma(\gamma+n)}{\Gamma(\gamma)}\frac{z^n}{(1-\overline{w}z)^{\gamma+n}},$$
where $n$ is a non-negative integer and $\Gamma$ is the usual gamma function, then $K_{w}^{[n]}$ is the reproducing kernel for the point evaluation of the $n^{th}$ derivative at $w$; {\it i.e.,}
$$f^{(n)}(w)=\langle f, K_{w}^{[n]} \rangle$$  
for all $f\in \mathcal{H}_\gamma$ (here $f^{(0)}=f$).\\[2mm]
\par
Let $\psi: \mathbb{D}\longrightarrow \mathbb{C}$ be a non-zero analytic function and $\varphi: \mathbb{D}\longrightarrow \mathbb{D}$ be holomorphic functions and $f\in \mathcal{H}_\gamma(\mathbb{D})$. These two functions induce the {\it generalized weighted composition operator} $D_{n, \psi, \varphi}$ on $\mathcal{H}_\gamma$, defined by $$ D_{n, \psi, \varphi}f(z)=\psi(z)f^{(n)}(\varphi(z)),$$
for every $z\in \mathbb{D}$ and $f\in\mathcal{H}_\gamma$ ($D_{n, \psi, \varphi}$ is also called a {\it weighted composition-differentiation  operator}) by other authors in \cite{Fatehi_Hammond_CAOT_2021, Han_Wang_JMAA_2021,Han_Wang_CAOT_2021}. This map generalizes many known operators. For example, if $n=0$ and $\psi \equiv 1$, $D_{n, \psi, \varphi}$ is just the composition operator $C_\varphi$ with symbol $\varphi$. If $n=0$, we obtain the weighted composition operator $\psi C_\varphi$ with the weight $\psi$ and symbol $\varphi$. If $\psi \equiv 1$ and $\varphi(z)=z$,  $ D_{n, \psi, \varphi}$ reduces to $D^n$, the differential operator of order $n$. We observe that $ D_{n, \psi, \varphi}=\psi C_\varphi D^n$. Thus, investigating generalized weighted composition maps provides a unified approach for treating these classes of operators. Operator-theoretic properties of  $D_{n, \psi, \varphi}$ and some of the special cases such as $C_\varphi D^1$ have been studied by several authors (see \cite{Datt_Jain_Ohri_Filomat_2020, Fatehi_Hammond_PAMS_2020, Hibschweiler_Portnoy_Rocky_2005, Ohno_BAMS_2006, Stević_BBMS_2009, Zhu_NFAO_2009}). To avoid triviality, we assume $\psi$ is not identically zero. A simple application of the closed graph theorem shows that $ D_{n, \psi, \varphi}$ maps $\mathcal{H}_\gamma$ into itself if, and only if,  $D_{n, \psi, \varphi}$ is bounded.

\subsection{$n$-variable setting}
Let $\mathbb{C}$ be the complex plane and let $\mathbb{C}^d$ be the $d$-dimensional complex Euclidean space. Let $\mathbb{T}$ denote the boundary of the unit disk $\mathbb{D}=\{z\in \mathbb{C}: |z|<1\}$ in the complex plane $\mathbb{C}$. For a fixed integer $d$, the polydisk $\mathbb{D}^d=\{z=(z_1, \dots, z_d)\in \mathbb{C}^d: |z_j|<1,~1\leq j\leq d\}$ and the torus $\mathbb{T}^d$ can be represented as the Cartesian products of $d$ copies of $\mathbb{D}$ and $\mathbb{T}$, respectively. Although we use the same symbol $z$ to denote a point in $\mathbb{C}$ or a point in $\mathbb{C}^d$, the reader should have no trouble recognizing the meaning of $z$ at every occurrence. Let $L^2=L^2(\mathbb{T}^d, \sigma)$ denote the standard Lebesgue space over $\mathbb{T}^d$, where $\sigma$ stands for the normalized Haar measure on the torus $\mathbb{T}^d$. It has been established that the set $\{e_{m_1, \dots, m_d}(z_1, \dots, z_d)=\prod_{i=1}^{n}z_i^{m_i}: m_1, \dots, m_d\in \mathbb{Z}\}$ forms an orthonormal basis for $L^2$. If a function $f$ belongs to $L^2$, it can be expressed by using its Fourier coefficients $\hat{f}(m_1, \dots, m_d)$ as 
$$f(z_1, \dots, z_d)=\sum_{m_1, \dots, m_d=-\infty}^{\infty}\widehat{f}(m_1, \dots, m_d)\prod_{i=1}^{d}z_i^{m_i}.$$
Furthermore, $L^\infty=L^\infty(\mathbb{T}^d)$ as the Banach space containing all essentially bounded functions defined on $\mathbb{T}^d$. 
\par
The Hardy space $\mathcal{H}^2(\mathbb{D}^d)$
 on the polydisk $\mathbb{D}^d$ refers to the space of analytic functions $f$
 defined on the polydisk $\mathbb{D}^d$ (a generalization of the disk to higher dimensions) that satisfies a specific condition related to their norm.

$$\|f\|^2=\sup_{0<r<1}\int_{\mathbb{T}^d}|f(r\xi)|^2d\sigma(\xi)<\infty.$$

It is well-known that for a function $f$ belonging to Hardy space $\mathcal{H}^2(\mathbb{D}^d)$, there exists a boundary function $\lim\limits_{r\rightarrow 1}f(r\xi)$, which is an element of $L^2$, for almost every $\xi\mathbb{T}^d$. The poisson extension of this boundary function coincides with $f$ on the polydisk $\mathbb{D}^d$. By identifying a function in $\mathcal{H}^2(\mathbb{D}^d)$ with its boundary function, the space $\mathcal{H}^2(\mathbb{D}^d)$ is a subset of $L^2$. So, the norm of $f\in \mathcal{H}^2$ can be denoted as $$\|f\|^2=\int_{\mathbb{T}^d}|f(\xi)|^2d\sigma(\xi).$$ 
The reproducing kernel function for Hardy space $\mathcal{H}^2(\mathbb{D}^d)$ is defined as 
$$K_w(z)=K_{w_1, \dots, w_d}(z_1, \dots, z_d)=\prod_{j=1}^{d}K_{w_j}(z_j)=\prod_{j=1}^{d}\frac{1}{(1-\overline{w_j}z_j)},$$
where $z=(z_1, \dots, z_d), ~w=(w_1, \dots, w_d)\in \mathbb{D}^d$ and $K_{w_j}(z_j)$'s are reproducing kernel functions for $\mathcal{H}^2(\mathbb{D})$. The normalized reproducing kernel for $\mathcal{H}^2(\mathbb{D}^d)$ is denoted by $\widehat{k}_w(z)=\prod_{j=1}^{d}e_{w_j}(z_j)$, with $e_{w_1}(z_1), \dots, e_{w_d}(z_d)$ being the normalized reproducing kernel functions for $\mathcal{H}^2(\mathbb{D})$. In a similar way, the reproducing kernel on the Hilbert space $\mathcal{H}_\gamma (\gamma\in \mathbb{N})$ over the polydisk is 
   
 $$K_w(z)=K_{w_1, \dots, w_d}(z_1, \dots, z_d)=\prod_{j=1}^{d}K_{w_j}(z_j)=\prod_{j=1}^{d}\frac{1}{(1-\overline{w_j}z_j)^\gamma},$$
 where $z=(z_1, \dots, z_d), ~w=(w_1, \dots, w_d)\in \mathbb{D}^d$ and $K_{w_j}(z_j)$'s are reproducing kernel functions for $\mathcal{H}_\gamma(\mathbb{D})$. The normalized reproducing kernel for $\mathcal{H}_\gamma(\mathbb{D}^d)$ is denoted by $\widehat{k}_w(z)=\prod_{j=1}^{d}e_{w_j}(z_j)$, with $e_{w_1}(z_1),  \dots, e_{w_d}(z_d)$ being the normalized reproducing kernel functions for $\mathcal{H}_\gamma(\mathbb{D})$.
 So,
  $$\widehat{k}_w(z)=\prod_{j=1}^{d}\frac{(1-|w_j|^2)^{\frac{\gamma}{2}}}{(1-\overline{w_j}z_j)^\gamma}.$$

 Let $\psi: \mathbb{D}^d\longrightarrow \mathbb{C}$ be an  analytic function and $\varphi: \mathbb{D}^d\longrightarrow \mathbb{D}^d$ be an analytic self-map of the polydisk $\mathbb{D}^d$ given by $\psi(z_1, \dots, z_d)=(\psi_1(z_1), \dots, \psi_d(z_d))$, where $\psi_1, \dots, \psi_d$ are analytic self-maps on the unit disk $\mathbb{D}$. The weighted composition operator $W_{\psi, \varphi}$ on $\mathcal{H}_\gamma(\mathbb{D}^d)$ defined as follows:
 $$ W_{\psi, \varphi} f(z)=\psi(z)f(\varphi(z)), ~f\in \mathcal{H}_\gamma(\mathbb{D}^d),$$
 that is, 
 $$W_{\psi, \varphi} f(z_1,\dots, z_d)=\psi(z_1,\dots, z_d)f(\varphi_1(z_1), \dots, \varphi_d(z_d)),~\mbox{for}~f\in \mathcal{H}_\gamma(\mathbb{D}^d).$$

We set the generalized weighted composition operator as   $D_{\mfn,\psi,\varphi}f(z),$ is defined by

\begin{gather*}
D_{\mfn,\psi,\varphi}f(z)=\psi(z)\partial^{\mfn}f(\varphi(z))=\psi(z)\frac{\partial^{n_1+\cdots+n_d}}{\partial z_1^{n_1},\dots,\partial z_d^{n_d}}f(\varphi(z)),
\end{gather*}
where $f$ is an element of the Hilbert space $\mathcal{H}_\gamma(\mathbb{D}^d)$. If $\mfn =0$, we obtain the weighted composition operator $W_{\psi, \varphi}$.

 We denote $K_w^{[\mfn]}(z)$ as the reproducing kernel for evaluating the $\sum_{j=1}^{d}n_j$-th partial derivative at a specific point $w$, where
\begin{align*}
K_w^{[\mfn]}(z)&=\prod_{j=1}^d\frac{\gamma(\gamma+1)\dots(\gamma+n_j-1)z^\mfn}{(1-\overline{w_j}z_j)^{\gamma+n_j}}\\
& =\prod_{j=1}^d\frac{\gamma(\gamma+1)\dots(\gamma+n_j-1)z_j^{n_j}}{(1-\overline{w_j}z_j)^{\gamma+n_j}},
\end{align*}
where  $\mfn=(n_1, \dots, n_d)$, are non-negative integers, $z, w\in \mathbb{D}^d$, and $z^{\mfn}=z_1^{n_1},\dots, z_d^{n_d}.$

\subsection{Problems for investigation}
  Composition operators on various function spaces have been studied extensively during the past few decades (see \cite{Cowen_MacCluer_1995,Shapiro_1993} and the references therein). The traditional choice of topics studied with respect to composition operators includes: boundedness, compactness, essential norm, closed range, etc. With the recent advances in the study of complex symmetry, it makes sense to ask the following question: {\bf are there any complex symmetric weighted composition operators?}\\
\par 
For the last decade, the above question has been studied by Jung {\it et al.} \cite{Jung_Kim_Ko_Lee_JFAA_2014} as well as  Garcia and Hammond \cite{Garcia_Hammond}. Independently, they have obtainded a criterion for complex symmetric structure of $W_{\psi, \varphi}$ on Hardy spaces with respect to the what we know as the standard conjugation, {\it i.e.,} those of the form $$ \mathcal{J}f(z)=\overline{f(\overline{z})},$$
where $f$ belongs to the Hardy space. Also, a criterion for the complex symmetric structure of $W_{\psi, \varphi}$ was discovered on Fock space $\mathcal{F}^2(\mathbb{C})$ (see \cite{Hai_Khoi_JMAA_2016}) and $\mathcal{F}^2(\mathbb{C}^n)$ (see \cite{Hai_Khoi_CVE_2018}) with respect to more general conjugation of the form 
$$\mathcal{A}_{u, v}f(z)=u(z)\overline{f(\overline{v(z)})},$$
where $u$ and $v$ are entire functions and $f$ belongs to the Fock space. Inspired by these recent results, Lim and Khoi \cite{Lim_Khoi_JMAA_2018} have attempted to study the complex symmetric structure of $W_{\psi, \varphi}$ on $\mathcal{H}_\gamma(\mathbb{D})$ with respect to conjuations of the form 
$$\mathcal{A}_{u, v}f(z)=u(z)\overline{f(\overline{v(z)})},$$
where $u: \mathbb{D}\longrightarrow \mathbb{C}$ and $v: \mathbb{D}\longrightarrow \mathbb{D}$ are holomorphic functions and $f\in \mathcal{H}_\gamma(\mathbb{D})$.\\

Now, the weighted composition conjuation on $\mathcal{H}_\gamma(\mathbb{D}^d)$ is of the form 
$$\mathcal{A}_{u, v}f(z)=u(z)\overline{f(\overline{v(z)})},~f\in \mathcal{H}_\gamma(\mathbb{D}^d),$$
that is, 
$$\mathcal{A}_{u, v} f(z_1,\dots, z_d)=u(z_1,\dots, z_d)\overline{f(\overline{v_1(z_1)}, \dots, \overline{v_d(z_d)})},~\mbox{for}~f\in \mathcal{H}_\gamma(\mathbb{D}^d).$$
If the function $u$ is identically equal to $1$, and $v$ is the identity map of the polydisk $\mathbb{D}^d$, the weighted composition conjugaton $\mathcal{A}_{u, v}$ simplifies to the standard conjugation $\mathcal{J}$, defined as $\mathcal{J}f(z)=\overline{f(\overline{z})}$. One can easily see that $\mathcal{A}_{u, v}=\mathcal{W}_{u, v}\mathcal{J}$. For a more comprehensive understanding of weighted composition conjugations, we refer to \cite{Lim_Khoi_JMAA_2018} and references therein. Glancing through the literature, the following natural question aries in our mind:\\ \\
{\bf Question-$1$: What is the complex symmetric structure of  weighted composition differentiation operators $D_{\mfn,\psi,\varphi}$ with respect to conjugations on $\mathcal{H}_\gamma(\mathbb{D}^d)$?}  \\

The present article narrows down to the precise question of classifying
weighted composition-differentiation operators $D_{\mfn,\psi,\varphi}$ acting on $\mathcal{H}_\gamma(\mathbb{D}^d)$, which are complex symmetric with respect to conjugations. The main focus of this paper is to provide all possible answers of the queries Question-1 to Question-4
raised above and below.\\ \\
The organization of the paper is as follows. In section 2, we obtain explicit conditions for weighted composition differentiation operators $D_{\mfn,\psi,\varphi}$ to be complex symmetry with respect to standard conjugation $\mathcal{J}$ on $\mathcal{H}_\gamma(\mathbb{D}^d)$. Furthermore, we establish both necessary and sufficient conditions for $D_{\mfn,\psi,\varphi}$ to be self-adjoint. We also characterize generalized weighted composition differentiation  operators $M_{n, \psi, \varphi}=\sum_{j=1}^{n}a_jD_{j,\psi_j, \varphi},$
(where $a_j\in \mathbb{C}$ for $j=1, 2, \dots, n$)  which are complex symmetry with respect to a weighted composition conjugation $C_{\mu, \xi}$ (Theorem \ref{Thm_3}) on  $\mathcal{H}_\gamma(\mathbb{D})$. We also explore the self-adjoint property for generalized weighted composition differentiation  operators $M_{n, \psi, \varphi}$ (Theorem \ref{Thm_4}) on $\mathcal{H}_\gamma(\mathbb{D})$. 
Section 3 is devoted to applications, and is of two folds. The first part presents linear composition operators not the  identities, the origin lies in the closure of the numerical range while the next part focuses on characterizing convexity of the Berezin range for composition operators acting on $\mathcal{H}_\gamma(\mathbb{D})$. Further, we geometrically illustrate the region of convexity for several sets of parameters using Python.

\section{Results and discussions}

\subsection{Generalized complex symmetric composition differentiation operators on $\mathcal{H}_\gamma(\mathbb{D}^{d})$}

In this subsection, we characterize complex symmetric composition differentiation operators on $\mathcal{H}_\gamma(\mathbb{D}^{d})$ over the unit polydisc with respect to conjugation $\mathcal{J}$.
\begin{lemma}
    Let $\mfn$ be a non-negative integer and $w\in \mathbb{D}^d$. Let $\psi: \mathbb{D}^d\longrightarrow \mathbb{C}$ be a non-zero analytic function and $\varphi: \mathbb{D}^d\longrightarrow \mathbb{D}^d$ with $\varphi(z_1, z_2, \dots, z_d)=(\varphi_1(z_1), \varphi_2(z_2), \dots, \varphi_d(z_d))$ be an analytic self map of $\mathbb{D}^d$ such that $D_{\mfn,\psi,\varphi}$ is bounded on $\mathcal{H}_\gamma(\mathbb{D}^d)$.
    Then $$  D_{\mfn,\psi, \varphi}^*K_w(z)=\overline{\psi(w)}K_{\varphi(w)}^{[\mfn]}(z).$$
\end{lemma}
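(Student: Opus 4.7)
The plan is to prove the identity by testing against an arbitrary $f \in \mathcal{H}_\gamma(\mathbb{D}^d)$ and using the defining property of the adjoint together with the reproducing kernel property for partial derivatives. Since $\{K_w : w \in \mathbb{D}^d\}$ has dense linear span in $\mathcal{H}_\gamma(\mathbb{D}^d)$, it suffices to verify that the two sides have the same inner product against every $f$ in the space.

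First, I would write, for arbitrary $f \in \mathcal{H}_\gamma(\mathbb{D}^d)$,
\begin{align*}
\langle f, D_{\mfn,\psi,\varphi}^* K_w \rangle
&= \langle D_{\mfn,\psi,\varphi} f, K_w \rangle \\
&= (D_{\mfn,\psi,\varphi} f)(w) \\
&= \psi(w)\, \partial^{\mfn} f(\varphi(w)),
\end{align*}
using in the second line the reproducing property $g(w) = \langle g, K_w\rangle$ and in the third line the definition of $D_{\mfn,\psi,\varphi}$. The boundedness assumption guarantees that $D_{\mfn,\psi,\varphi}^* K_w$ makes sense as an element of $\mathcal{H}_\gamma(\mathbb{D}^d)$.

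Next, I would invoke the higher-order reproducing property recorded in the introduction, extended to the polydisk setting: the kernel $K_{\varphi(w)}^{[\mfn]}$ was constructed precisely so that
$$
\partial^{\mfn} f(\varphi(w)) = \langle f, K_{\varphi(w)}^{[\mfn]} \rangle
$$
for every $f \in \mathcal{H}_\gamma(\mathbb{D}^d)$. (In the one-variable case this is the formula $f^{(n)}(w) = \langle f, K_w^{[n]}\rangle$ from the introduction; in the polydisk case it follows by taking mixed partial derivatives under the inner product and using the product structure of the kernel $K_w(z) = \prod_j (1-\overline{w_j}z_j)^{-\gamma}$.) Combining this with the previous computation yields
$$
\langle f, D_{\mfn,\psi,\varphi}^* K_w \rangle
= \psi(w)\, \langle f, K_{\varphi(w)}^{[\mfn]}\rangle
= \bigl\langle f,\ \overline{\psi(w)}\, K_{\varphi(w)}^{[\mfn]}\bigr\rangle,
$$
and since $f$ was arbitrary the claimed identity $D_{\mfn,\psi,\varphi}^* K_w = \overline{\psi(w)}\, K_{\varphi(w)}^{[\mfn]}$ follows.

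The only nontrivial point — and the main thing a reader might want spelled out — is the polydisk version of the mixed-partial reproducing identity $\partial^{\mfn} f(\varphi(w)) = \langle f, K_{\varphi(w)}^{[\mfn]} \rangle$. This can be justified by repeatedly differentiating the scalar identity $f(w) = \langle f, K_w\rangle$ in each variable $\overline{w_j}$ a total of $n_j$ times; the differentiations can be moved inside the inner product because the kernels depend holomorphically/antiholomorphically on their parameter and the partials of $K_w$ with respect to $\overline{w_j}$ produce exactly the factors $\gamma(\gamma+1)\cdots(\gamma+n_j-1)\, z_j^{n_j}/(1-\overline{w_j}z_j)^{\gamma+n_j}$ appearing in the definition of $K_w^{[\mfn]}$. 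Once this is in place the proof reduces to the two-line inner-product computation above.
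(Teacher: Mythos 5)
Your argument is correct and is essentially identical to the paper's proof: both test $D_{\mfn,\psi,\varphi}^*K_w$ against an arbitrary $f$, use the reproducing property and the definition of the operator to get $\psi(w)\,\partial^{\mfn}f(\varphi(w))$, and then rewrite this via the higher-order kernel $K_{\varphi(w)}^{[\mfn]}$. Your additional justification of the polydisk mixed-partial reproducing identity is a welcome extra detail that the paper takes for granted.
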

\begin{proof}
    Let $f\in \mathcal{H}_\gamma(\mathbb{D}^d)$, then
    \begin{align*}
        \langle f,  D_{\mfn,\psi, \varphi}^*K_w \rangle_{\mathcal{H}_\gamma(\mathbb{D}^d)}&=  \langle  D_{\mfn,\psi, \varphi}f, K_w \rangle_{\mathcal{H}_\gamma(\mathbb{D}^d)}\\
        &=\langle \psi \partial^n f(\varphi) , K_w \rangle_{\mathcal{H}_\gamma(\mathbb{D}^d)}\\
        &=\psi(w) \partial^n f(\varphi(w)) \\
        &=\langle  f, \overline{\psi(w)}K_{\varphi(w)}^{[\mfn]} \rangle_{\mathcal{H}_\gamma(\mathbb{D}^d)}.
    \end{align*}
    So, we have
    $$\langle f,  D_{\mfn,\psi, \varphi}^*K_w \rangle_{\mathcal{H}_\gamma(\mathbb{D}^d)}=\langle  f, \overline{\psi(w)}K_{\varphi(w)}^{[\mfn]} \rangle_{\mathcal{H}_\gamma(\mathbb{D}^d)}~~\mbox{for~ all} f\in \mathcal{H}_\gamma(\mathbb{D}^d)    .$$
    Then
     $$  D_{\mfn,\psi, \varphi}^*K_w(z)=\overline{\psi(w)}K_{\varphi(w)}^{[\mfn]}(z).$$
 \end{proof}

The following result investigates the complex symmetric structure of weighted composition differentiation operators $D_{\mfn,\psi,\varphi}$ with respect to the conjugation $\mathcal{J}$ on $\mathcal{H}_\gamma(\mathbb{D}^d)$.
\begin{theorem}\label{Thm_1}
Let $\gamma\in \mathbb{N}^+$ and $\mfn$ be a non-negative integer. Let $\psi: \mathbb{D}^d\longrightarrow \mathbb{C}$ be a non-zero analytic function and $\varphi: \mathbb{D}^d\longrightarrow \mathbb{D}^d$ with $\varphi(z_1,  \dots, z_d)=(\varphi_1(z_1), \dots, \varphi_d(z_d))$ be an analytic self map of $\mathbb{D}^d$ such that $D_{\mfn,\psi,\varphi}$ is bounded on $\mathcal{H}_\gamma(\mathbb{D}^d)$.
    Then $D_{\mfn,\psi,\varphi}$ is complex symmetric with respect to the conjugation $\mathcal{J}$ if, and only if,
\begin{gather*}
    \psi(z_1, \dots, z_d)=a\prod_{j=1}^dz_j^{n_j}\big(1-z_j\varphi_j(0)\big)^{-\gamma-n_j},
\end{gather*}
and 
\begin{gather*}
        \varphi_t(z_t)=\varphi_t(0)+\big(1-z_t\varphi_t(0)\big)^{-1}z_t\varphi_t^{\prime}(0),~ \mbox{for}~ t=1, \dots, d,
\end{gather*}
where $a=\partial^n\psi(0, \dots, 0)$.
\end{theorem}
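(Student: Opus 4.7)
The plan is to convert complex symmetry into a symmetric functional equation on the reproducing kernels, verify the sufficient direction by direct substitution, and extract the two claimed closed forms from this equation in the necessary direction.

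Since $\{K_w:w\in\mathbb{D}^d\}$ has dense linear span in $\mathcal{H}_\gamma(\mathbb{D}^d)$ and $\mathcal{J}$ is an isometric involution, the identity $D_{\mfn,\psi,\varphi}\mathcal{J}=\mathcal{J}D_{\mfn,\psi,\varphi}^*$ is equivalent to $D_{\mfn,\psi,\varphi}\mathcal{J}K_w=\mathcal{J}D_{\mfn,\psi,\varphi}^*K_w$ for every $w\in\mathbb{D}^d$. Using $\mathcal{J}K_w=K_{\bar w}$, the preceding lemma, and the derivative formula for $K_w^{[\mfn]}$, after cancelling the common Pochhammer constants $\gamma(\gamma+1)\cdots(\gamma+n_j-1)$ this reduces to the identity
\begin{equation*}
\psi(z)\prod_{j=1}^{d}w_j^{n_j}(1-w_j\varphi_j(z_j))^{-\gamma-n_j}=\psi(w)\prod_{j=1}^{d}z_j^{n_j}(1-\varphi_j(w_j)z_j)^{-\gamma-n_j},
\end{equation*}
which is precisely the symmetry $F(z,w)=F(w,z)$ for the function $F$ defined by either side.

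For the sufficient direction, I would plug the stated formulas for $\psi$ and each $\varphi_j$ into this identity; the factors $\prod_j z_j^{n_j}w_j^{n_j}$ and the constant $a$ then match on both sides, and what remains reduces coordinatewise to the polynomial identity
\begin{equation*}
(1-z_j\varphi_j(0))(1-w_j\varphi_j(z_j))=(1-w_j\varphi_j(0))(1-z_j\varphi_j(w_j)),
\end{equation*}
which is a one-line expansion after rewriting the claimed expression in the equivalent linear-fractional form $\varphi_j(z_j)=[\varphi_j(0)+(\varphi_j'(0)-\varphi_j(0)^2)z_j]/(1-z_j\varphi_j(0))$.

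For the necessary direction, observe that the factor $\prod_j w_j^{n_j}$ on the left side combined with the symmetry $F(z,w)=F(w,z)$ forces every Taylor coefficient $c_{\alpha,\beta}$ of $F$ to vanish unless $\alpha_j\ge n_j$ and $\beta_j\ge n_j$ for every $j$; hence $\psi(z)=\prod_j z_j^{n_j}\,g(z)$ for some analytic $g$, and dividing the kernel identity by $\prod_j z_j^{n_j}w_j^{n_j}$ produces
\begin{equation*}
g(z)\prod_{j=1}^{d}(1-w_j\varphi_j(z_j))^{-\gamma-n_j}=g(w)\prod_{j=1}^{d}(1-z_j\varphi_j(w_j))^{-\gamma-n_j}.
\end{equation*}
Setting $w=0$ yields $g(z)=g(0)\prod_j(1-z_j\varphi_j(0))^{-\gamma-n_j}$, the asserted form of $\psi$ with $a=g(0)\ne 0$ (non-triviality of $\psi$). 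Substituting this $g$ back cancels $g(0)$ and leaves $\prod_j A_j^{-\gamma-n_j}=\prod_j B_j^{-\gamma-n_j}$ with $A_j=(1-z_j\varphi_j(0))(1-w_j\varphi_j(z_j))$ and $B_j=(1-w_j\varphi_j(0))(1-z_j\varphi_j(w_j))$. Because $\varphi$ is diagonal, each pair $A_j,B_j$ depends only on $(z_j,w_j)$, so setting $z_k=w_k=0$ for $k\ne j$ isolates the $j$-th factor; since $\gamma+n_j\in\mathbb{N}$ and both sides equal $1$ at the origin, a single-valued branch argument on a connected neighbourhood forces $A_j=B_j$, and differentiating this identity in $w_j$ at $w_j=0$ solves for $\varphi_j(z_j)$ and gives precisely the claimed expression. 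The main obstacle is this last extraction step --- justifying the Taylor-coefficient factorization, the separation of the diagonal components $\varphi_j$, and the unique $(\gamma+n_j)$-th root extraction --- all of which rely essentially on $\gamma+n_j$ being a positive integer.
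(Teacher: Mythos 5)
Your proposal is correct and follows essentially the same route as the paper: reduce complex symmetry to the symmetric kernel identity, factor $\psi(z)=\prod_{j=1}^d z_j^{n_j}g(z)$, set $w=0$ to obtain the form of $\psi$, differentiate the remaining identity at the origin to recover each $\varphi_t$, and verify the converse through the same bilinear identity $(1-z_j\varphi_j(0))(1-w_j\varphi_j(z_j))=(1-w_j\varphi_j(0))(1-z_j\varphi_j(w_j))$. The only deviations are local and harmless (arguably cleaner): you justify the monomial factorization of $\psi$ via symmetry of the Taylor coefficients of $F$ rather than the paper's two-case comparison of exponents $b_j$ versus $n_j$, and you first isolate coordinates and extract the $(\gamma+n_j)$-th root by a connectedness argument before differentiating, whereas the paper differentiates the powered product identity directly.
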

\begin{proof}
        Suppose that $D_{\mfn,\psi,\varphi}$ is complex symmetric with respect to the conjugation $\mathcal{J}$, then we have
    \begin{align}\label{Them_Eqn_1}
         D_{\mfn,\psi, \varphi}\mathcal{J}K_w(z)= \mathcal{J}D_{\mfn,\psi, \varphi}^*K_w(z)
    \end{align}
    for all $z=(z_1, \dots, z_d)$, $w=(w_1, \dots, w_d)\in \mathbb{D}^d$.
    The left hand side of \eqref{Them_Eqn_1} gives us 
    \begin{align}\label{Them_Eqn_2}
         D_{\mfn,\psi, \varphi}\mathcal{J}K_w(z)&=\psi(z)\partial^n\mathcal{J}K_w(\varphi(z))\nonumber\\
         &=\psi(z_1, \dots, z_d)\partial^n\mathcal{J}K_{(w_1, \dots, w_d)}(\varphi_1(z_1), \dots, \varphi_d(z_d))\nonumber\\
         &=\psi(z_1, \dots, z_d)\partial^n\overline{K_{(w_1, \dots, w_d)}(\overline{\varphi_1(z_1)}, \dots, \overline{\varphi_d(z_d))}}\nonumber\\
          &=\psi(z_1, \dots, z_d)\partial^n\overline{K_{w_1}(\overline{\varphi_1(z_1)})},\dots, \overline{K_{w_d}(\overline{\varphi_d(z_d)})}\nonumber\\
           &=\psi(z_1, \dots, z_d)\partial^n\prod_{j=1}^d(1-w_j\varphi_j(z_j))^{-\gamma}\nonumber\\
            &=\psi(z_1, \dots, z_d)\prod_{j=1}^d(\gamma+n_j-1)!w_j^{n_j}(1-w_j\varphi_j(z_j))^{-\gamma-n_j}.
    \end{align}
    On the other hand, we have 
    \begin{align}\label{Them_Eqn_3}
    	\mathcal{J} D_{\mfn,\psi, \varphi}^*K_w(z)&=\mathcal{J}\overline{\psi(w)}K_{\varphi(w)}^{[\mfn]}(z)\nonumber\\
    	&=\mathcal{J}\overline{\psi(w)}\prod_{j=1}^d(\gamma+n_j-1)!z_j^{n_j}(1-z_j\overline{\varphi_j(w_j)})^{-\gamma-n_j}\nonumber\\
    	&=\psi(w_1, \dots, w_d)\prod_{j=1}^d(\gamma+n_j-1)!z_j^{n_j}(1-z_j\varphi_j(w_j))^{-\gamma-n_j}.
    \end{align}
    Hence from \eqref{Them_Eqn_1}, we have 
    \begin{align}\label{Them_Eqn_4}
    \psi(z_1, \dots, z_d)&\prod_{j=1}^d(\gamma+n_j-1)!w_j^{n_j}(1-w_j\varphi_j(z_j))^{-\gamma-n_j}\nonumber\\
    &=\psi(w_1, \dots, w_d)\prod_{j=1}^d(\gamma+n_j-1)!z_j^{n_j}(1-z_j\varphi_j(w_j))^{-\gamma-n_j}.
    \end{align}
    Letting $z=0, i.e., z_1=0,\dots,z_d=0$ in \eqref{Them_Eqn_4}, we get $\psi(0)=0$, that means $\psi(0, z_2, \dots,z_d)=0, \psi(z_1, 0 \dots,z_d)=0,\dots,\psi(z_1, z_2 \dots,z_{d-1}, 0)=0 $,  $\psi(0, \dots, 0)=0$ and so we can write $\psi(z)=g(z_1, \dots, z_d)\prod_{j=1}^dz_j^{b_j}$ and $g$ is analytic with $g(0, z_2, \dots, z_d)\neq 0, g(z_1, 0, \dots, z_d)\neq 0,\dots, g(z_1, \dots, z_{d-1},0)\neq 0$, $g(0,  \dots, 0)\neq 0$ and $b_j\in \mathbb{N}, j\in \{ 
    1, \dots, d\}$.
    Then, we get from \eqref{Them_Eqn_4} that
    \begin{align}\label{Them_Eqn_5}
   g(z_1, \dots, z_d)&\prod_{j=1}^d(\gamma+n_j-1)!w_j^{n_j}z_j^{b_j}(1-w_j\varphi_j(z_j))^{-\gamma-n_j}\nonumber\\
    &=g(w_1, \dots, w_d)\prod_{j=1}^d(\gamma+n_j-1)!w_j^{b_j}z_j^{n_j}(1-z_j\varphi_j(w_j))^{-\gamma-n_j}.
    \end{align}
    We show that $n_j=b_j$ for $j\in \{ 
    1, \dots, d\}$.\\ \\
    \underline{\bf Case-I:} If $n_j>b_j$, then it follows from \eqref{Them_Eqn_5} that 
     \begin{align}\label{Them_Eqn_6}
    g(z_1, \dots, z_d)&\prod_{j=1}^dw_j^{n_j-b_j}(1-w_j\varphi_j(z_j))^{-\gamma-n_j}\nonumber\\
   & =g(w_1, \dots, w_d)\prod_{j=1}^dz_j^{n_j-b_j}(1-z_j\varphi_j(w_j))^{-\gamma-n_j},
    \end{align}
    which is impossible because by letting $z_1=0,\dots,z_d=0$ in \eqref{Them_Eqn_6}, we get    $g(0, z_2, \dots, z_d)= 0, g(z_1, 0, \dots, z_d)= 0,\dots, g(z_1, \dots, z_{d-1},0)= 0$ and $g(0,  \dots, 0)= 0$ which is a contradiction. This contradiction is due to the fact that  $g(0, z_2, \dots, z_d)\neq 0, g(z_1, 0, \dots, z_d)\neq 0,\dots, g(z_1, \dots, z_{d-1},0)\neq 0$ and $g(0,  \dots, 0)\neq 0$. \\ \\
    \underline{\bf Case-II:} If $n_j<b_j$, then \eqref{Them_Eqn_5} becomes the following
     \begin{align}\label{Them_Eqn_7}
    g(z_1, \dots, z_d)&\prod_{j=1}^dz_j^{b_j-n_j}(1-w_j\varphi_j(z_j))^{-\gamma-n_j}\nonumber\\
    &=g(w_1, \dots, w_d)\prod_{j=1}^dw_j^{b_j-n_j}(1-z_j\varphi_j(w_j))^{-\gamma-n_j},
    \end{align}
    which is not possible because by letting $w_1=0,\dots,w_d=0$ in \eqref{Them_Eqn_7}, we get    $g(z,  \dots, z_d)\equiv 0$ on $\mathbb{D}^d$ which is a contradiction. Thus, we must have $n_j=b_j$ for all  $j\in \{ 
    1, \dots, d\}$ and hence \eqref{Them_Eqn_5} reduces to the following   
     \begin{align}\label{Them_Eqn_8}
    g(z_1, \dots, z_d)\prod_{j=1}^d(1-w_j\varphi_j(z_j))^{-\gamma-n_j}
    =g(w_1, \dots, w_d)\prod_{j=1}^d(1-z_j\varphi_j(w_j))^{-\gamma-n_j}.
    \end{align}
    Letting $w_j=0$ for all $j\in \{ 
    1, \dots, d\}$ in \eqref{Them_Eqn_8}, we obtain
    \begin{gather}\label{Them_Eqn_9}
   g(z_1, \dots, z_d)=g(0,  \dots, 0)\prod_{j=1}^d\big(1-z_j\varphi_j(0)\big)^{-\gamma-n_j}.
    \end{gather}
    It follows that 
    \begin{align}\label{Them_Eqn_10}
    \psi(z_1, \dots, z_d)&=g(z_1, \dots, z_d)\prod_{j=1}^dz_j^{n_j}\nonumber\\
    &=g(0,  \dots, 0)\prod_{j=1}^dz_j^{n_j}\big(1-z_j\varphi_j(0)\big)^{-\gamma-n_j}.
    \end{align}
  Since $g(0,  \dots, 0)\neq 0$, it follows from \eqref{Them_Eqn_4} and \eqref{Them_Eqn_10} that 
  \begin{align}\label{Them_Eqn_11}
  	\prod_{j=1}^d\big(1-w_j\varphi_j(0)\big)^{\gamma+n_j}\times	&\prod_{j=1}^d\big(1-z_j\varphi_j(w_j)\big)^{\gamma+n_j}\nonumber\\
  	&=\prod_{j=1}^d\big(1-z_j\varphi_j(0)\big)^{\gamma+n_j}\times	\prod_{j=1}^d\big(1-w_j\varphi_j(z_j)\big)^{\gamma+n_j}.
  	  \end{align}
  	  Through differentiating both sides with respect to the variable $z_t$, $(t\in\{1, \dots, d\})$ in \eqref{Them_Eqn_11} we have
  	  \begin{align}\label{Them_Eqn_12}
  	& (\gamma+n_t) \prod_{j=1}^d\big(1-w_j\varphi_j(0)\big)^{\gamma+n_j}\times	\prod_{j\neq t}^d\big(1-z_j\varphi_j(w_j)\big)^{\gamma+n_j}\big(1-z_t\varphi_t(w_t)\big)^{\gamma+n_t-1}(-\varphi_t(w_t))\nonumber\\
  	 &= (\gamma+n_t)\prod_{j\neq t}^d\big(1-z_j\varphi_j(0)\big)^{\gamma+n_j}\big(1-z_t\varphi_t(0)\big)^{\gamma+n_t-1}(-\varphi_t(0))\times	\prod_{j=1}^d\big(1-w_j\varphi_j(z_j)\big)^{\gamma+n_j}\nonumber\\
  	 &\hspace{2cm}+(\gamma+n_t)\prod_{j=1}^d\big(1-z_j\varphi_j(0)\big)^{\gamma+n_j}\prod_{j\neq t}^d\big(1-w_j\varphi_j(z_j)\big)^{\gamma+n_j}\nonumber\\
     &\hspace{7cm}\times	\big(1-w_t\varphi_t(z_t)\big)^{\gamma+n_t-1}\times(-w_t\varphi_t^{\prime}(z_t)).
  	  \end{align}
  	  Evaluating at $z_j=0$ for all $j=1, \dots, d$ and $w_j=0$ for $j\neq t$ in \eqref{Them_Eqn_12}, we obtain 
  	  \begin{align}\label{Them_Eqn_13}
  	  	\big(1-w_t\varphi_t(0)\big)^{\gamma+n_t}&(\varphi_t(w_t))\nonumber\\
  	  	&=	\big(1-w_t\varphi_t(0)\big)^{\gamma+n_t}(\varphi_t(0))+\big(1-w_t\varphi_t(0)\big)^{\gamma+n_t-1}(w_t\varphi_t^{\prime}(0)),
  	  \end{align} 
  	  which implies that 
  	  \begin{gather*}
  	  \varphi_t(w_t)=\varphi_t(0)+\big(1-w_t\varphi_t(0)\big)^{-1}w_t\varphi_t^{\prime}(0),~ \mbox{for}~ t=1, \dots, d.
  	  \end{gather*}
  	  Thus,
  	  \begin{gather*}
  	  \psi(z_1, \dots, z_d)=a\prod_{j=1}^dz_j^{n_j}\big(1-z_j\varphi_j(0)\big)^{-\gamma-n_j},
  	  \end{gather*}
  	  and 
  	  \begin{gather*}
  	  \varphi_t(z_t)=\varphi_t(0)+\big(1-z_t\varphi_t(0)\big)^{-1}z_t\varphi_t^{\prime}(0),~ \mbox{for}~ t=1, \dots, d,
  	  \end{gather*}
  	  where $$a=\partial^n\psi(0, \dots, 0)=\frac{\partial^{n_1+\cdots+n_d}}{\partial z_1^{n_1},\dots, \partial z_d^{n_d}}\psi(0, \dots, 0).$$
  	  
  	  Conversly assume that 
  	  \begin{gather*}
  	  \psi(z_1, \dots, z_d)=a\prod_{j=1}^dz_j^{n_j}\big(1-z_j\varphi_j(0)\big)^{-\gamma-n_j},
  	  \end{gather*}
  	  and 
  	  \begin{gather*}
  	  \varphi_t(z_t)=\varphi_t(0)+\big(1-z_t\varphi_t(0)\big)^{-1}z_t\varphi_t^{\prime}(0),~ \mbox{for}~ t=1, \dots, d,
  	  \end{gather*}
  	  where $a=\partial^n\psi(0, \dots, 0)$.
  	  From \eqref{Them_Eqn_2} and \eqref{Them_Eqn_3}, we have  
  	  
  	   \begin{align}\label{Them_Eqn_14}
  	   D_{\mfn,\psi, \varphi}&\mathcal{J}K_w(z)
  	 = \psi(z_1, \dots, z_d)\prod_{j=1}^d(\gamma+n_j-1)!w_j^{n_j}(1-w_j\varphi_j(z_j))^{-\gamma-n_j}\nonumber\\
  	  =&a\prod_{j=1}^dz_j^{n_j}\big(1-z_j\varphi_j(0)\big)^{-\gamma-n_j}\times\prod_{j=1}^d(\gamma+n_j-1)!w_j^{n_j}\nonumber\\
  	 ~~~~~~~~~~~~~~~~~~~~&\left(1-w_j\left(\varphi_j(0)+\big(1-z_j\varphi_j(0)\big)^{-1}z_j\varphi_j^{\prime}(0)\right)\right)^{-\gamma-n_j}\nonumber\\
  	  =&a\prod_{j=1}^d(\gamma+n_j-1)!z_j^{n_j}w_j^{n_j}\left(1-z_j\varphi_j(0)-w_j\left((1-z_j\varphi_j(0))\varphi_j(0)+z_j\varphi_j^{\prime}(0)\right) \right)^{-\gamma-n_j}.
  	  \end{align} 
  	  and 
  	   \begin{align}\label{Them_Eqn_15}
  	  &  \mathcal{J}D_{\mfn,\psi, \varphi}^*K_w(z)
  	  = \psi(w_1, \dots, w_d)\prod_{j=1}^d(\gamma+n_j-1)!z_j^{n_j}(1-z_j\varphi_j(w_j))^{-\gamma-n_j}\nonumber\\
  	  =&a\prod_{j=1}^dw_j^{n_j}\big(1-w_j\varphi_j(0)\big)^{-\gamma-n_j}\times\prod_{j=1}^d(\gamma+n_j-1)!z_j^{n_j}\nonumber\\
  	  &\left(1-z_j\left(\varphi_j(0)+\big(1-w_j\varphi_j(0)\big)^{-1}w_j\varphi_j^{\prime}(0)\right)\right)^{-\gamma-n_j}\nonumber\\
  	  =&a\prod_{j=1}^d(\gamma+n_j-1)!z_j^{n_j}w_j^{n_j}\left(1-w_j\varphi_j(0)-z_j\left((1-w_j\varphi_j(0))\varphi_j(0)+w_j\varphi_j^{\prime}(0)\right) \right)^{-\gamma-n_j},
  	  \end{align} 
  	   for all $z=(z_1, \dots, z_d)$, $w=(w_1, \dots, w_d)\in \mathbb{D}^d$.
  	   Since 
  	   \begin{align}\label{Them_Eqn_16}
  	   &1-z_j\varphi_j(0)-w_j\left((1-z_j\varphi_j(0))\varphi_j(0)+z_j\varphi_j^{\prime}(0)\right)\nonumber\\
  	   &=1-z_j\varphi_j(0)-w_j\varphi_j(0)-w_jz_j\varphi_j^2(0)+w_jz_j\varphi_j^{\prime}(0)\nonumber\\
  	   &=1-w_j\varphi_j(0)-z_j\left((1-w_j\varphi_j(0))\varphi_j(0)+w_j\varphi_j^{\prime}(0)\right),
  	   \end{align}
  	   it follows from \eqref{Them_Eqn_14} and \eqref{Them_Eqn_15} that 
  	   $D_{\mfn,\psi, \varphi}\mathcal{J}K_w(z)=\mathcal{J}D_{\mfn,\psi, \varphi}^*K_w(z)$. Hence $D_{\mfn,\psi,\varphi}$ is complex symmetric with respect to the conjugation $\mathcal{J}$. This completes the proof.
\end{proof}
\begin{remark}
If we set $\gamma=1~(\mbox{or}~ 2)$, we obtain explicit forms of $\psi$ and $\varphi$ for which $D_{\mfn,\psi,\varphi}$ is complex symmetric with respect to the usual conjugation on the Hardy and Bergman space on the polydisk. More precisely, we have the following corollary, which establishes complex symmetry of $D_{\mfn,\psi,\varphi}$ on the Bergman space on the polydisk.
  \end{remark}
\begin{corollary}
Let  $\mfn$ be a non-negative integer. Let $\psi: \mathbb{D}^d\longrightarrow \mathbb{C}$ be a non-zero analytic function and $\varphi: \mathbb{D}^d\longrightarrow \mathbb{D}^d$ with $\varphi(z_1,  \dots, z_d)=(\varphi_1(z_1), \dots, \varphi_d(z_d))$ be an analytic self map of $\mathbb{D}^d$ such that $D_{\mfn,\psi,\varphi}$ is bounded on $\mathcal{L}_a^2(\mathbb{D}^d)$.
Then $D_{\mfn,\psi,\varphi}$ is complex symmetric with respect to the conjugation $\mathcal{J}$ if, and only if,
\begin{gather*}
\psi(z_1, \dots, z_d)=a\prod_{j=1}^dz_j^{n_j}\big(1-z_j\varphi_j(0)\big)^{-n_j-2},
\end{gather*}
and 
\begin{gather*}
\varphi_j(z_j)=\varphi_j(0)+\big(1-z_j\varphi_j(0)\big)^{-1}z_j\varphi_j^{\prime}(0),~ \mbox{for}~ j=1, \dots, d,
\end{gather*}
where $a=\partial^n\psi(0, \dots, 0)$.
\end{corollary}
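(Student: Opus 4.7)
The corollary is a direct specialization of Theorem \ref{Thm_1}. The plan is to observe that the Bergman space $\mathcal{L}_a^2(\mathbb{D}^d)$ on the polydisk is precisely the space $\mathcal{H}_\gamma(\mathbb{D}^d)$ at the parameter value $\gamma = 2$: its reproducing kernel factorizes as a product of one-variable Bergman kernels $(1-\overline{w_j}z_j)^{-2}$, matching the general definition given in the introduction with $\gamma=2$. Since $2 \in \mathbb{N}^+$, the hypothesis $\gamma \in \mathbb{N}^+$ of Theorem \ref{Thm_1} is satisfied.

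With this identification in hand, I would simply substitute $\gamma = 2$ into the conclusion of Theorem \ref{Thm_1}. The exponent $-\gamma - n_j$ appearing in the formula for $\psi$ becomes $-2-n_j$, which is exactly the exponent in the statement of the corollary, while the formula for each coordinate $\varphi_t$ depends on $\gamma$ only through the ambient space and therefore remains unchanged. The constant $a = \partial^{\mfn}\psi(0,\dots,0)$ is defined identically in both statements.

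Thus the proof is a one-line invocation: "Apply Theorem \ref{Thm_1} with $\gamma = 2$, noting that $\mathcal{H}_2(\mathbb{D}^d) = \mathcal{L}_a^2(\mathbb{D}^d)$." There is no genuine obstacle here because both directions (necessity and sufficiency) have already been established in full generality for arbitrary $\gamma \in \mathbb{N}^+$; the corollary merely records the $\gamma=2$ instance for emphasis. The analogous Hardy-space corollary alluded to in the preceding remark would be obtained in the same way by taking $\gamma = 1$.
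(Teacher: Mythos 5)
Your proposal is correct and is exactly the paper's approach: the corollary is recorded as the $\gamma=2$ specialization of Theorem \ref{Thm_1}, using the identification of $\mathcal{H}_2(\mathbb{D}^d)$ with the Bergman space $\mathcal{L}_a^2(\mathbb{D}^d)$, so the exponent $-\gamma-n_j$ becomes $-n_j-2$ and the formula for each $\varphi_j$ is unchanged. Nothing further is needed.
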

\begin{remark}
	\begin{enumerate}
	\item[(i)] 	As a special case of the Theorem \ref{Thm_1}, we obtain the complex symmetric structure of the composition differentiation operator on the Hardy space on the polydisk, which has been recently proved by Ahamed {\it et al}. \cite[Theorem 2.2]{Ahamed_Allu_Rahman_2023}.
		\item[(ii)] Complex symmetric structure of the weighted composition differentiation operator $D_{\mfn,\psi,\varphi}$ on $\mathcal{H}_\gamma(\mathbb{D})$ on unit disk case, (see \cite{Lo_Loh_JMAA_2023} and for weighted composition operators $W_{\psi,\varphi}$ on $\mathcal{H}_\gamma(\mathbb{D})$ (see \cite [Proposition 2.3]{Lim_Khoi_JMAA_2018}). Similarly, dealing with complex symmetry of $D_{\psi,\varphi}$ on Hardy and Bergman space on the unit disk (see \cite{Han_Wang_JMAA_2021,Han_Wang_CAOT_2021}).
	\end{enumerate}
   
	 \end{remark}

 In Theorem \ref{Thm_2}, we obtain a condition that is both necessary and sufficient for the bounded weighted composition differentiation operator $D_{\mfn,\psi,\varphi}$ to satisfy self-adjoint properties on  $\mathcal{H}_\gamma(\mathbb{D}^d)$.
 \begin{theorem}\label{Thm_2}
 	Let $\gamma\in \mathbb{N}^+$ and $\mfn$ be a non-negative integer. Let $\psi: \mathbb{D}^d\longrightarrow \mathbb{C}$ be a non-zero analytic function and $\varphi: \mathbb{D}^d\longrightarrow \mathbb{D}^d$ with $\varphi(z_1,  \dots, z_d)=(\varphi_1(z_1), \dots, \varphi_d(z_d))$ be an analytic self map of $\mathbb{D}^d$ such that $D_{\mfn,\psi,\varphi}$ is bounded on $\mathcal{H}_\gamma(\mathbb{D}^d)$.
 	Then $D_{\mfn,\psi,\varphi}$ is self adjoint if, and only if,
 	\begin{gather*}
 	\psi(z_1, \dots, z_d)=a\prod_{j=1}^dz_j^{n_j}\Big(1-z_j\overline{\varphi_j(0)}\Big)^{-\gamma-n_j},
 	\end{gather*}
 	and 
 	\begin{gather*}
 	\varphi_t(z_t)=\varphi_t(0)+\Big(1-z_t\overline{\varphi_t(0)}\Big)^{-1}z_t\overline{\varphi_t^{\prime}(0)},~ \mbox{for}~ t=1, \dots, d,
 	\end{gather*}
 	where $a=\overline{g(0, \dots, 0)}$.
 \end{theorem}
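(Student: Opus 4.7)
The plan is to parallel Theorem 2.1 with the self-adjointness condition $D_{\mfn,\psi,\varphi} = D_{\mfn,\psi,\varphi}^*$ replacing complex symmetry. Since the reproducing kernels are total in $\mathcal{H}_\gamma(\mathbb{D}^d)$, self-adjointness is equivalent to $D_{\mfn,\psi,\varphi}K_w = D_{\mfn,\psi,\varphi}^* K_w$ for every $w \in \mathbb{D}^d$. Computing both sides directly---differentiating $K_w(u) = \prod_j(1 - \overline{w_j}u_j)^{-\gamma}$ in $u$, evaluating at $u = \varphi(z)$, and multiplying by $\psi(z)$ on the left, while the right is $\overline{\psi(w)}K_{\varphi(w)}^{[\mfn]}(z)$ by the preceding lemma---and cancelling the common factor $\prod_j\gamma(\gamma+1)\cdots(\gamma+n_j-1)$, one arrives at the key identity
\begin{gather*}
\psi(z)\prod_{j=1}^d \overline{w_j}^{n_j}\bigl(1 - \overline{w_j}\varphi_j(z_j)\bigr)^{-\gamma-n_j} = \overline{\psi(w)}\prod_{j=1}^d z_j^{n_j}\bigl(1 - z_j\overline{\varphi_j(w_j)}\bigr)^{-\gamma-n_j}.
\end{gather*}
Structurally this differs from the analogous identity in the proof of Theorem 2.1 only by conjugates on $w_j$ and on $\varphi_j(w_j)$.

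From here the derivation mirrors Theorem 2.1 almost verbatim. Setting $z = 0$ shows $\psi$ vanishes on each coordinate hyperplane through the origin, so $\psi(z) = g(z)\prod_j z_j^{b_j}$ with $g$ nonvanishing on those hyperplanes, and the same two-case analysis (comparing $n_j > b_j$ versus $n_j < b_j$) forces $b_j = n_j$ for every $j$. Substituting $w = 0$ in the reduced identity yields
\begin{gather*}
g(z) = \overline{g(0,\dots,0)}\prod_{j=1}^d \bigl(1 - z_j\overline{\varphi_j(0)}\bigr)^{-\gamma-n_j},
\end{gather*}
producing the stated form of $\psi$ with $a = \overline{g(0,\dots,0)}$; evaluating at $z = 0$ also confirms $g(0,\dots,0) \in \mathbb{R}$. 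Substituting this $\psi$ back and reducing to a single coordinate $t$ by setting $w_j = 0$ for $j\neq t$ produces the polynomial identity
\begin{gather*}
(1 - \overline{w_t}\varphi_t(0))(1 - z_t\overline{\varphi_t(w_t)}) = (1 - z_t\overline{\varphi_t(0)})(1 - \overline{w_t}\varphi_t(z_t)),
\end{gather*}
and differentiating in $z_t$ at $z_t = 0$ then solving for $\overline{\varphi_t(w_t)}$ gives $\overline{\varphi_t(w_t)} = \overline{\varphi_t(0)} + (1 - \overline{w_t}\varphi_t(0))^{-1}\overline{w_t}\varphi_t'(0)$; taking conjugates recovers the stated formula for $\varphi_t$. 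The converse then reduces to verifying the displayed algebraic identity directly from the closed-form expressions for $\psi$ and $\varphi$, exactly as in the converse of Theorem 2.1.

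The main obstacle is bookkeeping rather than conceptual: every step carries conjugates that must land in the correct position, and the manipulation repeatedly relies on $\overline{(1 - \zeta\eta)^{-\gamma-n_j}} = (1 - \overline{\zeta}\,\overline{\eta})^{-\gamma-n_j}$, which is legitimate precisely because $\gamma + n_j$ is a positive integer---the very reason the paper restricts to $\gamma \in \mathbb{N}$. A secondary subtlety worth flagging is that the derived formula for $\varphi_t$ itself forces $\varphi_t'(0) \in \mathbb{R}$ (read off by differentiating at zero), a tacit consistency condition which the derivation recovers for free and which the converse needs to close.
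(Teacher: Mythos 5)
Your proposal follows essentially the same route as the paper's proof: the same kernel identity $\psi(z)\prod_j\overline{w_j}^{n_j}(1-\overline{w_j}\varphi_j(z_j))^{-\gamma-n_j}=\overline{\psi(w)}\prod_j z_j^{n_j}(1-z_j\overline{\varphi_j(w_j)})^{-\gamma-n_j}$, the same factorization $\psi=g\prod_j z_j^{b_j}$ with the two-case argument forcing $b_j=n_j$, the same $w=0$ substitution for $g$, and the same differentiation in $z_t$ to extract $\varphi_t$, including the reality of $g(0,\dots,0)$ and $\varphi_t'(0)$ that the converse requires. The only cosmetic deviation is that you strip the exponent $\gamma+n_t$ by a root-of-unity argument before differentiating, whereas the paper differentiates the powered identity directly; both are fine.
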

 \begin{proof}
  	Suppose that $D_{\mfn,\psi,\varphi}$ is self adjoint. Then we have
 	\begin{align}\label{Them_2_Eqn_1}
 	D_{\mfn,\psi, \varphi}^*K_w(z)= D_{\mfn,\psi, \varphi}K_w(z)
 	\end{align}
 	for all $z=(z_1, \dots, z_d)$, $w=(w_1, \dots, w_d)\in \mathbb{D}^d$.
 	The left hand side of \eqref{Them_2_Eqn_1} gives us 
 	\begin{align}\label{Them_2_Eqn_2}
 	D_{\mfn,\psi, \varphi}^*K_w(z)&=\overline{\psi(w)}K_{\varphi(w)}^{[n]}(z)\nonumber\\
 	&=\overline{\psi(w_1, \dots, w_d)}\prod_{j=1}^d(\gamma+n_j-1)!z_j^{n_j}\Big(1-z_j\overline{\varphi_j(w_j)}\Big)^{-\gamma-n_j}.
 	\end{align}
 	On the other hand, we have 
 	\begin{align}\label{Them_2_Eqn_3}
 	 D_{\mfn,\psi, \varphi}K_w(z)=\psi(z_1, \dots, z_d)\prod_{j=1}^d(\gamma+n_j-1)!\overline{w_j}^{n_j}\Big(1-\overline{w_j}\varphi_j(z_j)\Big)^{-\gamma-n_j}.
 	\end{align}
 Therefore, from \eqref{Them_2_Eqn_2} and \eqref{Them_2_Eqn_3}, we have 
 	\begin{align}\label{Them_2_Eqn_4}
 	\psi(z_1, \dots, z_d)&\prod_{j=1}^d(\gamma+n_j-1)!\overline{w_j}^{n_j}(1-\overline{w_j}\varphi_j(z_j))^{-\gamma-n_j}\nonumber\\
 	&=\overline{\psi(w_1, \dots, w_d)}\prod_{j=1}^d(\gamma+n_j-1)!z_j^{n_j}(1-z_j\overline{\varphi_j(w_j)})^{-\gamma-n_j}.
 	\end{align}
 
 By applying similar method as used in Theorem \ref{Thm_1}, we assume that   
  $\psi(z_1, \dots, z_d)=g(z_1, \dots, z_d)\prod_{j=1}^dz_j^{b_j}$ and $g$ is analytic with $g(0, z_2, \dots, z_d)\neq 0$, $g(z_1, 0, \dots, z_d)\neq 0$, $\dots, g(z_1, \dots, z_{d-1},0)\neq 0$, $g(0,  \dots, 0)\neq 0$ and $b_j\in \mathbb{N}, j\in \{ 
 	1, \dots, d\}$.
 	Now, it follows from \eqref{Them_2_Eqn_4} and the method which we used in Theorem \ref{Thm_1} that
 	\begin{align}\label{Them_2_Eqn_5}
 	g(z_1, \dots, z_d)&\prod_{j=1}^d(\gamma+n_j-1)!(1-\overline{w_j}\varphi_j(z_j))^{-\gamma-n_j}\nonumber\\
 	&=g\overline{(w_1, \dots, w_d)}\prod_{j=1}^d(\gamma+n_j-1)!(1-z_j\overline{\varphi_j(w_j)})^{-\gamma-n_j}.
 	\end{align}
 	 	Letting $w_j=0$ for all $j=1, 2, \dots, d$ in \eqref{Them_2_Eqn_5}, we obtain
 	\begin{align*}
 	g(z_1, \dots, z_d)=g\overline{(0, \dots, 0)}\prod_{j=1}^d(1-z_j\overline{\varphi_j(0)})^{-\gamma-n_j}.
 	\end{align*}
 	It follows that  
 	\begin{align}\label{Eqn_2.22}
 		\psi(z_1, \dots, z_d)&=g(z_1, \dots, z_d)\prod_{j=1}^dz_j^{b_j}\nonumber\\
 		&=g\overline{(0, \dots, 0)}\prod_{j=1}^dz_j^{n_j}(1-z_j\overline{\varphi_j(0)})^{-\gamma-n_j}\nonumber\\
 			&=a\prod_{j=1}^dz_j^{n_j}(1-z_j\overline{\varphi_j(0)})^{-\gamma-n_j},
 	 	\end{align}
 	where $a=g\overline{(0, \dots, 0)}$. 	
 	Substituting \eqref{Eqn_2.22} in \eqref{Them_2_Eqn_4}, we obtain 	
 	 	\begin{align}\label{Them_2_Eqn_11}
 	a \prod_{j=1}^d \big(1-\overline{w_j}\varphi_j(0)\big)^{\gamma+n_j}\times	&\prod_{j=1}^d\big(1-z_j\overline{\varphi_j(w_j)}\big)^{\gamma+n_j}\nonumber\\
 	&=\overline{a}\prod_{j=1}^d\big(1-z_j\overline{\varphi_j(0)}\big)^{\gamma+n_j}\times	\prod_{j=1}^d\big(1-\overline{w_j}\varphi_j(z_j)\big)^{\gamma+n_j},
 	\end{align}
 for all $z=(z_1, \dots, z_d)$, $w=(w_1, \dots, w_d)\in \mathbb{D}^d$.
 
 If we put $w_1, \dots, w_d=0$, then we obtain $a=\overline{a}$, that is  $g\overline{(0, \dots, 0)}=g(0, \dots, 0) $, that is $g(0, \dots, 0)\in \mathbb{R}$.

 	By differentiating \eqref{Them_2_Eqn_11} both sides with respect to the variable $z_t$, $(t\in\{1, \dots, d\})$, we obtain 	
 	 	\begin{align}\label{Them_2_Eqn_12}
 	& (\gamma+n_t) \prod_{j=1}^d\big(1-\overline{w_j}\varphi_j(0)\big)^{\gamma+n_j}\times	\prod_{j\neq t}^d\big(1-z_j\overline{\varphi_j(w_j)}\big)^{\gamma+n_j}\big(1-z_t\overline{\varphi_t(w_t)}\big)^{\gamma+n_t-1}(-\overline{\varphi_t(w_t)})\nonumber\\
 	&= (\gamma+n_t)\prod_{j\neq t}^d\big(1-z_j\overline{\varphi_j(0)}\big)^{\gamma+n_j}\big(1-z_t\overline{\varphi_t(0)}\big)^{\gamma+n_t-1}(-\overline{\varphi_t(0)})\times	\prod_{j=1}^d\big(1-\overline{w_t}\varphi_j(z_j)\big)^{\gamma+n_j}\nonumber\\
 	&+(\gamma+n_t)\prod_{j=1}^d\big(1-z_j\overline{\varphi_j(0)}\big)^{\gamma+n_j}\prod_{j\neq t}^d\big(1-\overline{w_j}\varphi_j(z_j)\big)^{\gamma+n_j}\times	\big(1-\overline{w_t}\varphi_t(z_t)\big)^{\gamma+n_t-1}\times(-\overline{w_t}\varphi_t^{\prime}(z_t)),
 	\end{align}
 	 for all $z=(z_1, \dots, z_d)$, $w=(w_1, \dots, w_d)\in \mathbb{D}^d$.\\
 	 
 	Evaluating \eqref{Them_2_Eqn_12} at $z_j=0$ for all $j=1, \dots, d$ and $w_j=0$ for $j\neq t$, we get 
 	\begin{align}\label{Them_2_Eqn_13}
 	\big(1-\overline{w_t}\varphi_t(0)\big)^{\gamma+n_t}(\overline{\varphi_t(w_t)})=	\big(1-\overline{w_t}\varphi_t(0)\big)^{\gamma+n_t}(\overline{\varphi_t(0)})+\big(1-\overline{w_t}\varphi_t(0)\big)^{\gamma+n_t-1}(\overline{w_t}\varphi_t^{\prime}(0)),
 	\end{align} 
 	which implies that 
 	\begin{gather*}
 	\overline{\varphi_t(w_t)}=\overline{\varphi_t(0)}+\big(1-\overline{w_t}\varphi_t(0)\big)^{-1}\overline{w_t}\varphi_t^{\prime}(0),~ \mbox{for}~ t=1, \dots, d.
 	\end{gather*}
 	Thus,
 	 	\begin{align}\label{Them_2_Eqn_13_}
 	\varphi_t(w_t)=\varphi_t(0)+\big(1-w_t\overline{\varphi_t(0)}\big)^{-1}w_t\overline{\varphi_t^{\prime}(0)},~ \mbox{for}~ t=1, \dots, d.
 	\end{align}
 Differentiating \eqref{Them_2_Eqn_13_} partially with respect to $w_t$, we obtain 
 	\begin{align}\label{Them_2_Eqn_13.}
 \varphi_t^{\prime}(w_t)=\overline{\varphi_t^{\prime}(0)}\big(1-w_t\overline{\varphi_t(0)}\big)^{-2}.
 \end{align}
 Setting $w_t=0, (t=1, 2, \dots, d)$ in \eqref{Them_2_Eqn_13.} we obtain $\varphi_t^{\prime}(0)=\overline{\varphi_t^{\prime}(0)}$ which implies that $\varphi_t^{\prime}(0)\in \mathbb{R}$ for all $t=1, 2, \dots, d.$\\
 
 	Conversly let us assume that 
 	\begin{gather*}
 	\psi(z_1, \dots, z_d)=a\prod_{j=1}^dz_j^{n_j}\big(1-z_j\overline{\varphi_j(0)}\big)^{-\gamma-n_j},
 	\end{gather*}
 	and 
 	\begin{gather*}
 	\varphi_t(z_t)=\varphi_t(0)+\big(1-z_t\overline{\varphi_t(0)}\big)^{-1}z_t\overline{\varphi_t^{\prime}(0)},~ \mbox{for}~ t=1, \dots, d,
 	\end{gather*}
 	where $a=\overline{g(0, \dots, 0)}$.
 	From \eqref{Them_2_Eqn_2} and \eqref{Them_2_Eqn_3}, we have  
 	
 	\begin{align}\label{Them_2_Eqn_14}
 	  &D_{\mfn,\psi, \varphi}K_w(z)
 	= \psi(z_1, \dots, z_d)\prod_{j=1}^d(\gamma+n_j-1)!\overline{w_j}^{n_j}(1-\overline{w_j}\varphi_j(z_j))^{-\gamma-n_j}\nonumber\\
 	=&a\prod_{j=1}^dz_j^{n_j}\big(1-z_j\overline{\varphi_j(0)}\big)^{-\gamma-n_j}\times\prod_{j=1}^d(\gamma+n_j-1)!\overline{w_j}^{n_j}\nonumber\\
 	&\left(1-\overline{w_j}\left(\varphi_j(0)+\big(1-z_j\overline{\varphi_j(0)}\big)^{-1}z_j\overline{\varphi_j^{\prime}(0)}\right)\right)^{-\gamma-n_j}\nonumber\\
 	=&a\prod_{j=1}^d(\gamma+n_j-1)!z_j^{n_j}\overline{w_j}^{n_j}\left(1-z_j\overline{\varphi_j(0)}-\overline{w_j}\left((1-z_j\overline{\varphi_j(0)})\varphi_j(0)+z_j\overline{\varphi_j^{\prime}(0)}\right) \right)^{-\gamma-n_j}.
 	\end{align} 
 	and 
 	\begin{align}\label{Them_2_Eqn_15}
 	& D_{\mfn,\psi, \varphi}^*K_w(z)
 	= \overline{\psi(w_1, \dots, w_d)}\prod_{j=1}^d(\gamma+n_j-1)!z_j^{n_j}(1-z_j\overline{\varphi_j(w_j)})^{-\gamma-n_j}\nonumber\\
  	&=\overline{a}\prod_{j=1}^d(\gamma+n_j-1)!z_j^{n_j}\overline{w_j}^{n_j}\left(1-\overline{w_j}\varphi_j(0)-z_j\left((1-\overline{w_j}\varphi_j(0))\overline{\varphi_j(0)}+\overline{w_j}\overline{\varphi_j^{\prime}(0)}\right) \right)^{-\gamma-n_j},
 	\end{align} 
 	for all $z=(z_1, \dots, z_d)$, $w=(w_1, \dots, w_d)\in \mathbb{D}^d$.\\
 	 	Since 
 	\begin{align}\label{Them_2_Eqn_16}
 	1-z_j\overline{\varphi_j(0)}-&\overline{w_j}\left((1-z_j\overline{\varphi_j(0)})\varphi_j(0)+z_j\overline{\varphi_j^{\prime}(0)}\right)\nonumber\\
 	&=1-z_j\overline{\varphi_j(0)}-\overline{w_j}\varphi_j(0)-\overline{w_j}z_j\overline{\varphi_j(0)}\varphi_j(0)+\overline{w_j}z_j\overline{\varphi_j^{\prime}(0)}\nonumber\\
 	&=1-\overline{w_j}\varphi_j(0)-z_j\left((1-\overline{w_j}\varphi_j(0))\overline{\varphi_j(0)}+\overline{w_j}\overline{\varphi_j^{\prime}(0)}\right),
 	\end{align}
 	it follows from \eqref{Them_2_Eqn_14} and \eqref{Them_2_Eqn_15} that 
 	$D_{\mfn,\psi, \varphi}^*K_w(z)=D_{\mfn,\psi, \varphi}K_w(z)$. Hence $D_{\mfn,\psi,\varphi}$ is self-adjoint on $\mathcal{H}_\gamma(\mathbb{D}^d)$. This completes the proof.
 \end{proof}
 \begin{remark}
 	If we set $\gamma=1~(\mbox{or}~ 2)$, we obtain both necessary and sufficient condition for the bounded composition-differentiation operator $D_{\mfn,\psi,\varphi}$ to be self-adjoint on the Hardy and Bergman space on the polydisk. More precisely, we have the following corollary which characterizes self-adjoint property of $D_{\mfn,\psi,\varphi}$ on the Berman space over the polydisk.
 \end{remark}
 \begin{corollary}\label{Cor_2.8}
 	Let  $\mfn$ be a non-negative integer. Let $\psi: \mathbb{D}^d\longrightarrow \mathbb{C}$ be a non-zero analytic function and $\varphi: \mathbb{D}^d\longrightarrow \mathbb{D}^d$ with $\varphi(z_1,  \dots, z_d)=(\varphi_1(z_1), \dots, \varphi_d(z_d))$ be an analytic self map of $\mathbb{D}^d$ such that $D_{\mfn,\psi,\varphi}$ is bounded on $\mathcal{L}_a^2(\mathbb{D}^d)$.
 	Then $D_{\mfn,\psi,\varphi}$ is self-adjoint if, and only if,
 	\begin{gather*}
 	\psi(z_1, \dots, z_d)=a\prod_{j=1}^dz_j^{n_j}\big(1-z_j\overline{\varphi_j(0)}\big)^{-n_j-2},
 	\end{gather*}
 	and 
 	\begin{gather*}
 	\varphi_t(z_t)=\varphi_t(0)+\big(1-z_t\overline{\varphi_t(0)}\big)^{-1}z_t\overline{\varphi_t^{\prime}(0)},~ \mbox{for}~ t=1, \dots, d,
 	\end{gather*}
 	where $a=\overline{g(0, \dots, 0)}$.
 \end{corollary}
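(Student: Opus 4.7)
The plan is to obtain this corollary as a direct specialization of Theorem \ref{Thm_2} to the parameter value $\gamma=2$. As recalled in the one-variable introduction, the space $\mathcal{H}_\gamma(\mathbb{D})$ reduces to the classical Bergman space $\mathcal{L}_a^2(\mathbb{D})$ when $\gamma=2$, and by the tensor product structure of reproducing kernels this identification extends to the polydisk: the kernel $\prod_{j=1}^d(1-\overline{w_j}z_j)^{-2}$ of $\mathcal{H}_2(\mathbb{D}^d)$ is precisely the reproducing kernel of $\mathcal{L}_a^2(\mathbb{D}^d)$. Hence the hypothesis that $D_{\mfn,\psi,\varphi}$ is bounded on $\mathcal{L}_a^2(\mathbb{D}^d)$ is identical to the hypothesis of Theorem \ref{Thm_2} with $\gamma=2$.

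First I would invoke Theorem \ref{Thm_2} with $\gamma=2$ and then simply substitute into the two explicit formulas. The exponent $-\gamma-n_j$ becomes $-2-n_j=-n_j-2$, which matches the form of $\psi$ claimed in the corollary. The formula for $\varphi_t$ does not involve $\gamma$ at all, so it transcribes verbatim, and the constant $a=\overline{g(0,\dots,0)}$ is unchanged. Conversely, the sufficiency direction is also covered by the corresponding part of Theorem \ref{Thm_2}, since the algebraic identity \eqref{Them_2_Eqn_16} that drove the converse calculation is independent of the particular value of $\gamma$.

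There is no real obstacle here; the only point worth flagging is that the ambient assumption $\gamma\in\mathbb{N}^+$ of Theorem \ref{Thm_2} is satisfied at $\gamma=2$, which is essential because, as noted in the introduction, the identity $(ab)^\gamma=a^\gamma b^\gamma$ used implicitly when manipulating products of reproducing kernels holds in general only for integer $\gamma$. Thus the corollary follows immediately from Theorem \ref{Thm_2} without any additional computation.
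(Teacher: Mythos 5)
Your proposal is correct and coincides with the paper's own derivation: the corollary is stated there as an immediate consequence of Theorem \ref{Thm_2} with $\gamma=2$, under the standard identification of $\mathcal{H}_2(\mathbb{D}^d)$ with the Bergman space $\mathcal{L}_a^2(\mathbb{D}^d)$ via its reproducing kernel. The substitution $-\gamma-n_j\mapsto -n_j-2$ and the observation that the formula for $\varphi_t$ and the constant $a$ are $\gamma$-independent are exactly what is needed, so nothing further is required.
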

 \begin{remark}
 	\begin{enumerate}
 		\item[(i)] As a special case of our result we obtain the self-adjoint property for the composition differentiation operator on the Hardy space over the polydisk wich was recenty proved by Ahamed {\it et al.} \cite[Theorem 3.1]{Ahamed_Allu_Rahman_2023}.
 		\item[(ii)] If we put $d=1$ in Theorem \ref{Thm_2}, then we can easily derive self-adjoint property of $D_{\mfn,\psi,\varphi}$  on $\mathcal{H}_\gamma(\mathbb{D})$ on the unit disk (see the work of Lo and Loh \cite[Theorem 4.1]{Lo_Loh_JMAA_2023}).
 		\item[(iii)] From Corollary \ref{Cor_2.8}, we can easily derive self-adjoint property of $D_{\mfn,\psi,\varphi}$ on the Bergman space on the unit disk (see \cite[Theorem 2.4]{Allu_Haldar_Pal_2023}).
 	\end{enumerate} 	   
 \end{remark}

 If we put ${\bf n}=0$, then our composition differentiation operator $D_{\mfn,\psi,\varphi}$ reduces to the weighted composition operator $D_{\psi,\varphi}$. So our next corollary basically says about the self-adjoint property for the composition operator on the polydisk $\mathbb{D}^d$.

 \begin{corollary}\label{Cor_2.10}
	Let $\psi: \mathbb{D}^d\longrightarrow \mathbb{C}$ be a non-zero analytic function and $\varphi: \mathbb{D}^d\longrightarrow \mathbb{D}^d$ with $\varphi(z_1,  \dots, z_d)=(\varphi_1(z_1), \dots, \varphi_d(z_d))$ be an analytic self map of $\mathbb{D}^d$ such that $D_{\psi,\varphi}$ is bounded on $\mathcal{H}_\gamma(\mathbb{D}^d)$.
	Then $D_{\psi,\varphi}$ is self-adjoint if, and only if,
	\begin{gather*}
	\psi(z_1, \dots, z_d)=a\prod_{j=1}^d\big(1-z_j\overline{\varphi_j(0)}\big)^{-n_j-\gamma},
	\end{gather*}
	and 
	\begin{gather*}
	\varphi_t(z_t)=\varphi_t(0)+\big(1-z_t\overline{\varphi_t(0)}\big)^{-1}z_t\overline{\varphi_t^{\prime}(0)},~ \mbox{for}~ t=1, \dots, d,
	\end{gather*}
	where $a=\overline{g(0, \dots, 0)}$.
\end{corollary}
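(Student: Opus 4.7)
The plan is to derive Corollary \ref{Cor_2.10} as a direct reduction of Theorem \ref{Thm_2} to the weighted composition setting. Since the remark preceding the statement identifies $D_{\psi,\varphi} = D_{\mathbf{0},\psi,\varphi}$, one may apply Theorem \ref{Thm_2} with $\mathbf{n} = \mathbf{0}$; the exponent $-n_j - \gamma$ in the stated $\psi$ is the direct transcription of $-\gamma - n_j$ from Theorem \ref{Thm_2}, and the Möbius-type formula for each $\varphi_t$ is unchanged since it does not depend on $\mathbf{n}$.

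First, I would record the two kernel formulas used throughout: $D_{\psi,\varphi} K_w(z) = \psi(z) \prod_{j=1}^d (1-\overline{w_j}\varphi_j(z_j))^{-\gamma}$ (direct computation) and $D_{\psi,\varphi}^* K_w(z) = \overline{\psi(w)} \prod_{j=1}^d (1-z_j\overline{\varphi_j(w_j)})^{-\gamma}$ (from the lemma preceding Theorem \ref{Thm_2} at $\mathbf{n} = \mathbf{0}$). The self-adjointness condition $D_{\psi,\varphi} K_w = D_{\psi,\varphi}^* K_w$ for all $w \in \mathbb{D}^d$ then yields the functional equation
\begin{equation*}
\psi(z) \prod_{j=1}^d (1-\overline{w_j}\varphi_j(z_j))^{-\gamma} = \overline{\psi(w)} \prod_{j=1}^d (1-z_j\overline{\varphi_j(w_j)})^{-\gamma},
\end{equation*}
which is the $\mathbf{n} = \mathbf{0}$ instance of the identity driving the proof of Theorem \ref{Thm_2}.

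Next, I would carry out the same zero-substitution and partial-differentiation argument as in Theorem \ref{Thm_2}: factorize $\psi(z) = g(z) \prod_j z_j^{b_j}$ with $g$ nonvanishing on the coordinate hyperplanes, apply the Case I / Case II arguments to force each $b_j = n_j$, evaluate at $w_j = 0$ for all $j$ to extract the product form $g(z) = g(0,\ldots,0) \prod_j (1-z_j\overline{\varphi_j(0)})^{-\gamma - n_j}$, and then differentiate once in $z_t$ at $z = 0$ and $w_j = 0$ for $j \neq t$ to pin down $\varphi_t$ as the Möbius-type map $\varphi_t(z_t) = \varphi_t(0) + (1-z_t\overline{\varphi_t(0)})^{-1} z_t \overline{\varphi_t'(0)}$. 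The constant $a$ becomes $\overline{g(0,\ldots,0)}$, giving the $\psi$ claimed in the corollary with exponent $-n_j - \gamma$ (inherited from Theorem \ref{Thm_2}'s general-$\mathbf{n}$ expression).

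The converse direction is routine: substitute the explicit forms back into the two kernel expressions and verify equality via the algebraic identity
\begin{equation*}
1 - z_j \overline{\varphi_j(0)} - \overline{w_j}\bigl((1-z_j\overline{\varphi_j(0)})\varphi_j(0) + z_j \overline{\varphi_j'(0)}\bigr) = 1 - \overline{w_j}\varphi_j(0) - z_j\bigl((1-\overline{w_j}\varphi_j(0))\overline{\varphi_j(0)} + \overline{w_j}\overline{\varphi_j'(0)}\bigr),
\end{equation*}
which interchanges $z$ and $w$ in the denominator, exactly as at the end of the proof of Theorem \ref{Thm_2}. There is no substantive obstacle; the exposition is entirely a bookkeeping exercise tracking the specialization of Theorem \ref{Thm_2} to the weighted composition setting, with the $n_j$'s in the exponent serving as residual multi-index notation inherited from the general statement.
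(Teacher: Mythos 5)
Your proposal is correct and matches the paper's treatment: the paper offers no separate proof of Corollary \ref{Cor_2.10}, presenting it exactly as the specialization of Theorem \ref{Thm_2} to $\mathbf{n}=\mathbf{0}$, which is the reduction you carry out (including the observation that the residual $n_j$ in the exponent $-n_j-\gamma$ is just inherited notation, equal to $-\gamma$ here). The only cosmetic remark is that at $\mathbf{n}=\mathbf{0}$ the factorization and Case I/II steps are vacuous, since setting $z=0$ no longer forces $\psi(0)=0$ and one passes directly to the $w=0$ substitution; this does not affect the validity of your argument.
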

\begin{remark}
	\begin{enumerate}
		\item[(i)] By setting $d=1$ ({\it i.e.,} unit disk case) in Corollary \ref{Cor_2.10}, we obtain the self-adjoint property for the weighted composition operator $D_{\psi,\varphi}$ on $\mathcal{H}_\gamma(\mathbb{D})$, (see \cite[Theorem 2.9]{Lim_Khoi_JMAA_2018}).
		\item[(ii)]  By setting $\gamma=1~ \mbox{or}~ 2$ and $d=1$ in Corollary \ref{Cor_2.10}, we obtain the self-adjoint property for the weighted composition operator $D_{\psi,\varphi}$ on the Hardy space (see \cite[Theorem 3.3]{Han_Wang_JMAA_2021}) and for the Bergman space (see \cite[Theorem 2.3]{Liu_Ponnusamy_Xie_LAMA_2023}). 
	\end{enumerate}
\end{remark}

\subsection{ Generalized complex symmetric composition differentiation operators on $\mathcal{H}_\gamma(\mathbb{D})$}
In this subsection, we discuss complex symmetry, self-adjoint property of the generalized weighted composition differentiation operators on $\mathcal{H}_\gamma(\mathbb{D})$.
We denote $M_{n, \psi, \varphi}$ by generalized weighted composition differentiation  operators on $\mathcal{H}_\gamma(\mathbb{D})$ which is defined by 
$$M_{n, \psi, \varphi}=\displaystyle\sum_{j=1}^{n}a_jD_{j,\psi_j, \varphi},$$
where $a_j\in \mathbb{C}$ for $j=1, 2, \dots, n$.
\\ \\
Let $\mathcal{H}(\mathbb{D})$ be the space of all analytic functions in the unit disk $\mathbb{D}$.
For $\mu, \xi$ on the unit circle $z\in \mathbb{C}: |z|=1$, the conjugation $C_{\mu, \xi}$ is defined as $C_{\mu, \xi}f=\mu \overline{f(\overline{\xi z})}$, where $f$ belongs to the space of analytic functions. \\
{\bf Question-$2$: Is it possible to study the complex symmetric structure of the generalized weighted composition operators $M_{n, \psi, \varphi}$ on $\mathcal{H}_\gamma(\mathbb{D})$ concerning conjugations $C_{\mu, \xi}$?}\\

The following very basic lemma is essential for our analysis.
\begin{lemma}
 Let $\varphi$ be an analytic self-map of $\mathbb{D}$ and $\psi_j\in \mathcal{H}(\mathbb{D})~(j=1, 2,\dots, n)$ such that $M_{n, \psi, \varphi}=\displaystyle\sum_{j=1}^{n}a_jD_{j,\psi_j, \varphi} $ (where $a_j\in \mathbb{C}$ for $j=1, 2, \dots, n$) is bounded on $\mathcal{H}_\gamma(\mathbb{D})$.
  	Then 
  	$$ M_{n,\psi, \varphi}^*K_w=\sum_{j=1}^{n}\overline{a_j\psi_j(w)}K_{\varphi(w)}^{[j]}.$$
\end{lemma}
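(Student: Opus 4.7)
The plan is to mimic the adjoint calculation already carried out for a single operator $D_{\mathbf{n},\psi,\varphi}$ in the earlier lemma, then exploit linearity of the sum. Specifically, I would fix an arbitrary $f\in \mathcal{H}_\gamma(\mathbb{D})$ and compute $\langle f, M_{n,\psi,\varphi}^* K_w\rangle$ by pushing $M_{n,\psi,\varphi}$ onto the first slot, i.e.
\[
\langle f, M_{n,\psi,\varphi}^* K_w\rangle = \langle M_{n,\psi,\varphi}f, K_w\rangle = \Big\langle \sum_{j=1}^n a_j\, \psi_j\cdot f^{(j)}\!\circ\varphi,\ K_w\Big\rangle.
\]

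Next I would pull the (finite) sum and the scalars $a_j$ outside the inner product and apply the reproducing property at the point $w$ to each summand, obtaining $\sum_{j=1}^n a_j\,\psi_j(w)\, f^{(j)}(\varphi(w))$. At this stage the essential input is the identity $f^{(j)}(\zeta)=\langle f, K_\zeta^{[j]}\rangle$ with $\zeta=\varphi(w)$, which is precisely the reproducing kernel for point-evaluation of the $j$-th derivative recalled in Subsection 1.1. Substituting and collapsing the scalars back into the second slot (they come out conjugated because the inner product is conjugate-linear in the second variable) gives
\[
\langle f, M_{n,\psi,\varphi}^* K_w\rangle = \Big\langle f,\ \sum_{j=1}^n \overline{a_j\,\psi_j(w)}\, K_{\varphi(w)}^{[j]}\Big\rangle.
\]

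Since this equality holds for every $f\in\mathcal{H}_\gamma(\mathbb{D})$, the identification $M_{n,\psi,\varphi}^* K_w = \sum_{j=1}^n \overline{a_j\,\psi_j(w)}\, K_{\varphi(w)}^{[j]}$ follows from the uniqueness in the Riesz representation theorem. There is no real obstacle here: boundedness of $M_{n,\psi,\varphi}$ is already assumed, the sum is finite so interchanging summation with the inner product is automatic, and the derivative-reproducing identity has been recorded as part of the setup. In effect, this lemma is a one-line linearisation of the earlier adjoint formula $D_{j,\psi_j,\varphi}^* K_w = \overline{\psi_j(w)} K_{\varphi(w)}^{[j]}$, and the proof amounts to writing $M_{n,\psi,\varphi}^* = \sum_{j=1}^n \overline{a_j}\, D_{j,\psi_j,\varphi}^*$ and evaluating on $K_w$.
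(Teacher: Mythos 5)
Your proposal is correct and follows essentially the same route as the paper's own proof: testing against an arbitrary $f$, moving $M_{n,\psi,\varphi}$ to the first slot, applying the reproducing identities for $\psi_j$ at $w$ and for $f^{(j)}$ at $\varphi(w)$, and concluding by uniqueness of the Riesz representative. Your closing observation that the lemma is just conjugate-linearity of the adjoint applied to the single-operator formula $D_{j,\psi_j,\varphi}^*K_w=\overline{\psi_j(w)}K_{\varphi(w)}^{[j]}$ is an accurate summary of what the paper's computation does.
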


\begin{proof}
	Let $f\in \mathcal{H}_\gamma(\mathbb{D})$, then we have
	\begin{align*}
	\langle f,  M_{n,\psi, \varphi}^*K_w \rangle_{\mathcal{H}_\gamma(\mathbb{D})}&=  \langle  M_{n,\psi, \varphi}f, K_w \rangle_{\mathcal{H}_\gamma(\mathbb{D})}\\
	&=\bigg\langle \sum_{j=1}^{n}a_jD_{j,\psi_j, \varphi} , K_w \bigg\rangle_{\mathcal{H}_\gamma(\mathbb{D})}\\
	&=\sum_{j=1}^{n}\bigg\langle a_j\psi_j(f^{(j)}o \varphi) , K_w \bigg\rangle_{\mathcal{H}_\gamma(\mathbb{D})}\\
	&=\sum_{j=1}^{n}a_j\psi_j(w) f^{(j)}(\varphi(w)) \\
	&=\sum_{j=1}^{n}\bigg\langle  f, \overline{a_j\psi_j(w)}K_{\varphi(w)}^{[j]} \bigg\rangle_{\mathcal{H}_\gamma(\mathbb{D})}\\
		&=\bigg\langle  f, \sum_{j=1}^{n}\overline{a_j\psi_j(w)}K_{\varphi(w)}^{[j]} \bigg\rangle_{\mathcal{H}_\gamma(\mathbb{D})}.
	\end{align*}
	So, we have
	$$\bigg\langle f,  M_{n,\psi, \varphi}^*K_w \bigg\rangle_{\mathcal{H}_\gamma(\mathbb{D})}=\bigg\langle  f,  \sum_{j=1}^{n}\overline{a_j\psi_j(w)}K_{\varphi(w)}^{[j]} \bigg\rangle_{\mathcal{H}_\gamma(\mathbb{D})}~~\mbox{for~ all}~ f\in \mathcal{H}_\gamma(\mathbb{D}),$$
	which leads to the following
$$ M_{n,\psi, \varphi}^*K_w=\sum_{j=1}^{n}\overline{a_j\psi_j(w)}K_{\varphi(w)}^{[j]}.$$
\end{proof}
In the following theorem, we provide a necessary and sufficient condition for which of the generalized weighted  composition differentiation operator $M_{n,\psi, \varphi} $ is complex symmetric on $\mathcal{H}_\gamma(\mathbb{D})$ with the conjugation $C_{\mu, \xi}$.

\begin{theorem}\label{Thm_3}
	  Let $\varphi: \mathbb{D}\longrightarrow \mathbb{D}$ be an analytic self-map of $\mathbb{D}$ and $\psi_j\in \mathcal{H}(\mathbb{D})~(j=1, 2,\dots, n)$ with $\psi_j \neq 0$ such that $M_{n, \psi, \varphi}=\displaystyle\sum_{j=1}^{n}a_jD_{j,\psi_j, \varphi} $ (where $a_j\in \mathbb{C}$ for $j=1, 2, \dots, n$) is bounded on $\mathcal{H}_\gamma(\mathbb{D})$. Then $M_{n,\psi,\varphi}$ is complex symmetric with respect to the conjugation $\mathcal{C}_{\mu, \xi}$ if, and only if,
	\begin{gather*}
	\psi_j(z)=c_jz^{j}\big(1-\xi \varphi(0) z\big)^{-\gamma-j},
	\end{gather*}
	and 
	\begin{gather*}
	\varphi(z)=\varphi(0)+\big(1-\xi \varphi(0) z\big)^{-1}\varphi^{\prime}(0)z,~ \mbox{for}~ z\in \mathbb{D},
	\end{gather*}
	where $c_j\in \mathbb{C}$, $j=1, \dots, n$.
\end{theorem}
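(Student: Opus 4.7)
The plan is to adapt the kernel-function method of Theorem~\ref{Thm_1} to the one-variable setting with the weighted conjugation $\mathcal{C}_{\mu,\xi}$ and to the additive structure of $M_{n,\psi,\varphi}$. First I would use the standard criterion that $M_{n,\psi,\varphi}$ is complex symmetric with respect to $\mathcal{C}_{\mu,\xi}$ if and only if $M_{n,\psi,\varphi}\mathcal{C}_{\mu,\xi}K_w=\mathcal{C}_{\mu,\xi}M_{n,\psi,\varphi}^{*}K_w$ for every $w\in\mathbb{D}$. Since $\mathcal{C}_{\mu,\xi}K_w(z)=\mu(1-w\xi z)^{-\gamma}$ is holomorphic in $z$, its $j$-th derivative equals $\mu\frac{\Gamma(\gamma+j)}{\Gamma(\gamma)}(w\xi)^j(1-w\xi z)^{-\gamma-j}$, and I can combine this with the formula for $M_{n,\psi,\varphi}^{*}K_w$ from the preceding lemma. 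After cancelling $\mu$ the criterion reduces to the identity
\begin{equation*}
\sum_{j=1}^{n}a_j\psi_j(z)\frac{\Gamma(\gamma+j)}{\Gamma(\gamma)}(w\xi)^j\bigl(1-w\xi\varphi(z)\bigr)^{-\gamma-j}=\sum_{j=1}^{n}a_j\psi_j(w)\frac{\Gamma(\gamma+j)}{\Gamma(\gamma)}(\xi z)^j\bigl(1-\xi z\varphi(w)\bigr)^{-\gamma-j}
\end{equation*}
for all $z,w\in\mathbb{D}$.

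For the forward implication, I would first evaluate at $z=0$. The functions $w\mapsto w^j\bigl(1-w\xi\varphi(0)\bigr)^{-\gamma-j}$ for $j=1,\dots,n$ have distinct leading $w$-orders equal to $j$ and are therefore linearly independent, which forces $\psi_j(0)=0$ for each $j$ with $a_j\neq 0$. Writing $\psi_j(z)=g_j(z)z^{b_j}$ with $b_j\geq 1$ and $g_j(0)\neq 0$, I would run the same two-case argument as in Theorem~\ref{Thm_1} (the case $b_j>j$ leads to $g_j(0)=0$ after setting $z=0$, while $b_j<j$ leads to $g_j\equiv 0$ after setting $w=0$) to conclude $b_j=j$. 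With the orders matched, setting $w=0$ in the residual identity yields $g_j(z)=g_j(0)(1-\xi\varphi(0)z)^{-\gamma-j}$, hence $\psi_j(z)=c_jz^{j}(1-\xi\varphi(0)z)^{-\gamma-j}$ with $c_j=g_j(0)$. Finally, differentiating what remains with respect to $z$ and evaluating at carefully chosen points (the strategy of equations (2.12)--(2.13) in Theorem~\ref{Thm_1}) recovers the M\"obius-type formula $\varphi(z)=\varphi(0)+(1-\xi\varphi(0)z)^{-1}\varphi^{\prime}(0)z$.

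The principal obstacle is this separation step: since the sum couples operators of different differentiation orders, I must justify that the case analysis can be carried out summand by summand. I expect this to go through because the lowest $w$-order in the $j$-th summand on the left is exactly $w^j$ with prefactor $a_j\psi_j(z)\frac{\Gamma(\gamma+j)}{\Gamma(\gamma)}\xi^j$, and symmetrically the lowest $z$-order in the $j$-th summand on the right is $z^j$; matching the coefficient of $w^j$ on the left against that on the right for each $j$ independently reproduces a single-$j$ equation of the shape already analysed in Theorem~\ref{Thm_1}. For the converse, I would substitute the explicit forms of $\psi_j$ and $\varphi$ into both sides; the verification reduces to the algebraic identity $(1-\xi\varphi(0)z)(1-w\xi\varphi(z))=(1-\xi\varphi(0)w)(1-\xi z\varphi(w))$, which is immediate from the M\"obius form of $\varphi$ and which, raised to the power $\gamma+j$, gives term-by-term equality and hence complex symmetry.
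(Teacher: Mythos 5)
Your proposal follows the paper's proof of Theorem~\ref{Thm_3} essentially step for step: the same kernel criterion $M_{n,\psi,\varphi}\mathcal{C}_{\mu,\xi}K_w=\mathcal{C}_{\mu,\xi}M_{n,\psi,\varphi}^{*}K_w$, the same reduction (after cancelling $\mu$ and the unimodular $\xi$) to the single-index identity $\psi_j(z)w^{j}(1-\xi w\varphi(z))^{-\gamma-j}=\psi_j(w)z^{j}(1-\xi z\varphi(w))^{-\gamma-j}$, the same two-case order-matching argument giving $b_j=j$, the same evaluations at $w=0$ and the same differentiation step for $\varphi$, and the same algebraic identity in the converse. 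So the verdict is: same approach, and correct to the same extent the paper's proof is.

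The one place you go beyond the paper is the one place your sketch is shaky. The paper simply asserts that the summed identity implies the term-by-term identity for each $j$; you rightly flag this as the principal obstacle, but your proposed justification does not work as stated. Matching the coefficient of $w^{j}$ on the two sides does \emph{not} produce a single-$j$ equation: on the left the $j$-th summand indeed starts at order $w^{j}$, but on the right the $w$-dependence enters through $\psi_i(w)$ and $(1-\xi z\varphi(w))^{-\gamma-i}$, so every summand $i$ contributes to every power of $w$ (via $\psi_i^{(k)}(0)$ and the derivatives of $\varphi$ at $0$). What the $w^{0}$ coefficient does give cleanly is $\sum_i a_iC_i\psi_i(0)(\xi z)^{i}(1-\xi z\varphi(0))^{-\gamma-i}=0$, whence $a_i\psi_i(0)=0$ by linear independence of the $z^{i}(1-\xi z\varphi(0))^{-\gamma-i}$; but decoupling the full identity into independent equations for each $j$ requires more than this. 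Since the paper performs exactly this decoupling by fiat, your proposal is no weaker than the published argument, but you should not present the coefficient-of-$w^{j}$ heuristic as a proof of the separation step.
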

\begin{proof}
Let $M_{n, \psi, \varphi}$ be complex symmetric with respect to the conjugation $\mathcal{C}_{\mu, \xi}$. 
Then we have
\begin{align}\label{Them_Eqn_5_2}
M_{n,\psi, \varphi}\mathcal{C}_{\mu, \xi}K_w(z)= \mathcal{C}_{\mu, \xi}M_{n,\psi, \varphi}^*K_w(z)
\end{align}
for all $z,w\in \mathbb{D}$.
A simple computation in \eqref{Them_Eqn_5_2} gives
 \begin{align}\label{Them_Eqn_5_3}
M_{n,\psi, \varphi}\mathcal{C}_{\mu, \xi}K_w(z)&=M_{n,\psi, \varphi}\mathcal{C}_{\mu, \xi}\left(\frac{1}{(1-\overline{w}z)^{\gamma}}\right)\nonumber\\
&=M_{n,\psi, \varphi}\left(\frac{\mu}{(1-\xi wz)^{\gamma}}\right)\nonumber\\
&=\displaystyle\sum_{j=1}^{n}a_jD_{j,\psi_j, \varphi} \left(\frac{\mu}{(1-\xi wz)^{\gamma}}\right)\nonumber\\
&=\displaystyle\sum_{j=1}^{n}a_j(\gamma+j-1)!\mu \psi_j(z)(\xi w)^{j}(1-\xi w\varphi(z))^{-\gamma-j}.
\end{align}
and 

\begin{align}\label{Them_Eqn_5_4}
\mathcal{C}_{\mu, \xi} M_{n,\psi, \varphi}^*K_w(z)&=\mathcal{C}_{\mu, \xi}\displaystyle\sum_{j=1}^{n}\overline{a_j\psi_j(w)}K_{\varphi(w)}^{[j]}(z)\nonumber\\
&=\mathcal{C}_{\mu, \xi}\displaystyle\sum_{j=1}^{n}\overline{a_j\psi_j(w)}(\gamma+j-1)!z^{j}(1-\overline{\varphi(w)}z)^{-\gamma-j}\nonumber\\
&=\displaystyle\sum_{j=1}^{n}a_j(\gamma+j-1)!\mu\psi_j(w)(\xi z)^{j}(1-\varphi(w)\xi z)^{-\gamma-j}.
\end{align}
By using \eqref{Them_Eqn_5_3} and \eqref{Them_Eqn_5_4}, we obtain
\begin{align*}
\displaystyle\sum_{j=1}^{n}a_j(\gamma+j-1)!\mu \psi_j(z)(\xi w)^{j}&(1-\xi w\varphi(z))^{-\gamma-j}\\
&=\displaystyle\sum_{j=1}^{n}a_j(\gamma+j-1)!\mu\psi_j(w)(\xi z)^{j}(1-\varphi(w)\xi z)^{-\gamma-j}.
\end{align*}
Since $\mu, \xi\in \{z\in \mathbb{C}: |z|=1\}$, the above equation implies that 

\begin{equation}\label{Them_Eqn_5_5}
 \psi_j(z)w^{j}(1-\xi w\varphi(z))^{-\gamma-j}=\psi_j(w)z^{j}(1-\xi z\varphi(w))^{-\gamma-j}
\end{equation}
for all $z,w\in \mathbb{D}$ and $j=1,2, \dots, n$.\\

By setting $z=0$ in \eqref{Them_Eqn_5_5}, we get $\psi_j(0)=0$ for  $j=1,2, \dots, n$. Let $\psi_j(z)=z^mg_j(z)$, where $m\in \mathbb{N}$ and $g_j$ is analytic on $\mathbb{D}$ with $g_j(0)\neq 0$. Now our aim is to show that $m=j$.\\

 \underline{\bf Case-I:} If $m>j$, from \eqref{Them_Eqn_5_5}, it follows that
 
\begin{equation}\label{Them_Eqn_5_6}
z^{m-j}g_j(z)(1-\xi w\varphi(z))^{-\gamma-j}=w^{m-j}g_j(w)(1-\xi z\varphi(w))^{-\gamma-j}
\end{equation}
for all $z,w\in \mathbb{D}$ and $j=1,2, \dots, n$.
Putting $w=0$ in \eqref{Them_Eqn_5_6}, we get $g_j(z)=0$ on $\mathbb{D}$, which is a contradiction of the fact that $g_j(0)\neq 0$. \\

 \underline{\bf Case-II:} If $m<j$, from \eqref{Them_Eqn_5_5}, it follows that

\begin{equation}\label{Them_Eqn_5_7}
w^{j-m}g_j(z)(1-\xi w\varphi(z))^{-\gamma-j}=z^{j-m}g_j(w)(1-\xi z\varphi(w))^{-\gamma-j}
\end{equation}
for all $z,w\in \mathbb{D}$ and $j=1,2, \dots, n$. By setting $w=0$ in \eqref{Them_Eqn_5_7}, we have $g_j(0)=0$, which contradicts the fact that $g_j(0)\neq 0$. Therefore, we must have $m=j$ and now \eqref{Them_Eqn_5_5} is equivalent to 

  \begin{equation}\label{Them_Eqn_5_8}
  g_j(z)(1-\xi w\varphi(z))^{-\gamma-j}=g_j(w)(1-\xi z\varphi(w))^{-\gamma-j}
  \end{equation}
for all $z,w\in \mathbb{D}$.
 By setting $w=0$ in \eqref{Them_Eqn_5_8}, we obtain

 \begin{equation}\label{Them_Eqn_5_9}
g_j(z)=g_j(0)(1-\xi z\varphi(0))^{-\gamma-j}
\end{equation}
for $j=1,2, \dots, n$. Thus, we have
 \begin{align*}
\psi_j(z)&=z^jg_j(z)\\
&=z^jg_j(0)(1-\xi z\varphi(0))^{-\gamma-j}\\
&=c_jz^j(1-\xi z\varphi(0))^{-\gamma-j},
\end{align*}
where $c_j=g_j(0), j=1, \dots, n$.

By substituting the value of $\psi_j(z)$ in $\eqref{Them_Eqn_5_5}$, we obtain

\begin{equation}\label{Them_Eqn_5_10}
(1-\xi z\varphi(0))^{\gamma+j}(1-\xi w\varphi(z))^{\gamma+j}=(1-\xi w\varphi(0))^{\gamma+j}(1-\xi z\varphi(w))^{\gamma+j}
\end{equation}
for all $z,w\in \mathbb{D}$. By differentiating both sides of \eqref{Them_Eqn_5_10} with respect to $w$, we obtain
\begin{align}\label{Eq_3.10}
(\gamma+j)(1-\xi z\varphi(0))^{\gamma+j}&(1-\xi w\varphi(z))^{\gamma+j-1}(-\xi \varphi(z))\nonumber\\
&=(\gamma+j)(1-\xi w\varphi(0))^{\gamma+j-1}(1-\xi z\varphi(w))^{\gamma+j}(-\xi \varphi(0))\nonumber\\
&+(\gamma+j)(1-\xi w\varphi(0))^{\gamma+j}(1-\xi z\varphi(w))^{\gamma+j-1}(-\xi z\varphi^\prime(w)).
\end{align}

By substituting $w=0$ in \eqref{Eq_3.10}, we obtain
\begin{align*}
(\gamma+j)(1-\xi z\varphi(0))^{\gamma+j}(-\xi \varphi(z))
&=(\gamma+j)(1-\xi z\varphi(0))^{\gamma+j}(-\xi \varphi(0))\\
&+(\gamma+j)(1-\xi z\varphi(0))^{\gamma+j-1}(-\xi z\varphi^\prime(0)).
\end{align*} 
Thus,
\begin{align*}
	\varphi(z)&=\varphi(0)+\frac{z\varphi^\prime(0)}{1-\xi z\varphi(0)}.	
\end{align*}

Conversly, let $c_1, c_2, \dots, c_n \in \mathbb{C}$ be such that  
\begin{gather*}
\psi_j(z)=c_jz^{j}\big(1-\xi \varphi(0) z\big)^{-\gamma-j},
\end{gather*}
for $j=1, 2, \dots, n$, and
\begin{align*}
\varphi(z)&=\varphi(0)+\frac{z\varphi^\prime(0)}{1-\xi z\varphi(0)}.	
\end{align*}
for all $z\in \mathbb{D}$. Now, using \eqref{Them_Eqn_5_3} and \eqref{Them_Eqn_5_4}, we obtain

\begin{align}\label{Them_Eqn_5_11}
&M_{n,\psi, \varphi}\mathcal{C}_{\mu, \xi}K_w(z)
=\displaystyle\sum_{j=1}^{n}a_j(\gamma+j-1)!\mu \psi_j(z)(\xi w)^{j}(1-\xi w\varphi(z))^{-\gamma-j}\nonumber\\
&=\displaystyle\sum_{j=1}^{n}a_j(\gamma+j-1)!\mu c_jz^j(\xi w)^{j}(1-\xi w\varphi(z))^{-\gamma-j}(1-\xi z\varphi(0))^{-\gamma-j}\nonumber\\
&=\displaystyle\sum_{j=1}^{n}a_j(\gamma+j-1)!\mu c_j(\xi wz)^{j}\left(1-\xi w\left(\varphi(0)+\frac{z\varphi^\prime(0)}{1-\xi z\varphi(0)}\right)\right)^{-\gamma-j}(1-\xi z\varphi(0))^{-\gamma-j}\nonumber\\
&=\displaystyle\sum_{j=1}^{n}a_j(\gamma+j-1)!\mu c_j(\xi wz)^{j}\left(1-\xi z\varphi(0)-\xi w\varphi(0)+\xi^2wz\varphi^2(0)-\xi wz\varphi^\prime(0)\right)^{-\gamma-j}.
\end{align}

Similarly, we have

\begin{align}\label{Them_Eqn_5_12}
&\mathcal{C}_{\mu, \xi} M_{n,\psi, \varphi}^*K_w(z)
=\displaystyle\sum_{j=1}^{n}a_j(\gamma+j-1)!\mu \psi_j(w)(\xi z)^{j}(1-\xi z\varphi(w))^{-\gamma-j}\nonumber\\
&=\displaystyle\sum_{j=1}^{n}a_j(\gamma+j-1)!\mu c_jw^j(\xi z)^{j}(1-\xi z\varphi(w))^{-\gamma-j}(1-\xi w\varphi(0))^{-\gamma-j}\nonumber\\
&=\displaystyle\sum_{j=1}^{n}a_j(\gamma+j-1)!\mu c_j(\xi wz)^{j}\left(1-\xi z\left(\varphi(0)+\frac{w\varphi^\prime(0)}{1-\xi w\varphi(0)}\right)\right)^{-\gamma-j}(1-\xi w\varphi(0))^{-\gamma-j}\nonumber\\
&=\displaystyle\sum_{j=1}^{n}a_j(\gamma+j-1)!\mu c_j(\xi wz)^{j}\left(1-\xi w\varphi(0)-\xi z\varphi(0)+\xi^2wz\varphi^2(0)-\xi wz\varphi^\prime(0)\right)^{-\gamma-j}.
\end{align}
It follows from \eqref{Them_Eqn_5_11} and \eqref{Them_Eqn_5_12} that $M_{n,\psi, \varphi}\mathcal{C}_{\mu, \xi}K_w(z)
=\mathcal{C}_{\mu, \xi} M_{n,\psi, \varphi}^*K_w(z)$ for all $z\in \mathbb{D}$.
Since the span of the reproducing kernel functions is dense in $\mathcal{H}_\gamma(\mathbb{D})$, the operator $ M_{n,\psi, \varphi}$ is complex symmetric with conjugation $\mathcal{C}_{\mu, \xi}$. This complete the proof.

\end{proof}

\begin{remark}
	If we set $\gamma=1 (\mbox{or}~ 2)$, we obtain explicit forms of $\psi$ and $\varphi$ for which $M_{n,\psi,\varphi}$ is complex symmetric with respect to the conjugation $\mathcal{C}_{\mu, \xi}$ on the Hardy and Bergman space. More precisely, we have the following corollary, which deals with the complex symmetry of $M_{n,\psi,\varphi}$  with respect to the conjugation $\mathcal{C}_{\mu, \xi}$ on the Bergman space.
\end{remark}

\begin{corollary}
	 Let $\varphi: \mathbb{D}\longrightarrow \mathbb{D}$ be an analytic self-map of $\mathbb{D}$ and $\psi_j\in \mathcal{H}(\mathbb{D})~(j=1, 2,\dots, n)$ with $\psi_j \neq 0$ such that $M_{n, \psi, \varphi}=\displaystyle\sum_{j=1}^{n}a_jD_{j,\psi_j, \varphi} $ (where $a_j\in \mathbb{C}$ for $j=1, 2, \dots, n$) is bounded on $\mathcal{L}_a^2(\mathbb{D})$. Then $M_{n,\psi,\varphi}$ is complex symmetric with respect to the conjugation $\mathcal{C}_{\mu, \xi}$ if, and only if,
	\begin{gather*}
	\psi_j(z)=c_jz^{j}\big(1-\xi \varphi(0) z\big)^{-j-2},
	\end{gather*}
	and 
	\begin{gather*}
	\varphi(z)=\varphi(0)+\big(1-\xi \varphi(0) z\big)^{-1}\varphi^{\prime}(0)z,~ \mbox{for}~ z\in \mathbb{D},
	\end{gather*}
	where $c_j\in \mathbb{C}$, $j=1, \dots, n$.
\end{corollary}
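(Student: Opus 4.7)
The corollary is an immediate specialization of Theorem \ref{Thm_3} to the case $\gamma = 2$. The plan is to first verify that the Bergman space $\mathcal{L}_a^2(\mathbb{D})$ arises as $\mathcal{H}_\gamma(\mathbb{D})$ precisely when $\gamma = 2$, a fact already noted in the introduction: in that case the reproducing kernel $K_w(z) = (1 - \overline{w}z)^{-2}$ coincides with the standard Bergman kernel, and the norm structure agrees up to the usual identification. Since $\gamma = 2$ lies in $\mathbb{N}^+$, the hypothesis of Theorem \ref{Thm_3} is satisfied, and the boundedness of $M_{n,\psi,\varphi}$ on $\mathcal{L}_a^2(\mathbb{D})$ is exactly the boundedness on $\mathcal{H}_\gamma(\mathbb{D})$ used there.

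Having made this identification, I would simply substitute $\gamma = 2$ into the conclusion of Theorem \ref{Thm_3}. The exponent $-\gamma - j$ in the weight function becomes $-j - 2$, giving
\[
\psi_j(z) = c_j z^j (1 - \xi \varphi(0) z)^{-j-2},
\]
while the formula for $\varphi$ is independent of $\gamma$ and thus retains the form
\[
\varphi(z) = \varphi(0) + (1 - \xi \varphi(0) z)^{-1} \varphi'(0) z.
\]
Both the necessity and the sufficiency transfer verbatim from Theorem \ref{Thm_3}; one does not need to redo the reproducing-kernel computation, the case analysis on the vanishing order of $\psi_j$ at the origin, or the differentiation step that extracts the form of $\varphi$.

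There is no real technical obstacle in this proof: the corollary is a one-line consequence of Theorem \ref{Thm_3}. The only verification worth recording explicitly is that $\gamma = 2$ indeed recovers the Bergman-space reproducing kernel, which is already established in the paper, so nothing further is required.
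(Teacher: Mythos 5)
Your proposal is correct and is exactly how the paper obtains this result: the corollary is stated as the $\gamma=2$ specialization of Theorem \ref{Thm_3}, using the identification $\mathcal{H}_2(\mathbb{D})=\mathcal{L}_a^2(\mathbb{D})$ noted in the introduction, so the exponent $-\gamma-j$ becomes $-j-2$ while the formula for $\varphi$ is unchanged. No further argument is needed.
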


\begin{remark}
	\begin{enumerate}
		\item[(i)] 	As a special case of our result ({\it i.e.,} if we set $\gamma=\alpha+2$), we obtain the complex symmetric structure of the generalized composition differentiation operator on the weighted Bergman space on the unit disk, which was recently proved by Ahamed and Rahman \cite[Theorem 2.1]{Ahamed_Rahman_2023}.
	\item[(ii)] It is not difficult to see that, if $a_1=a_2=\dots=a_{n-1}=0$ and $a_n=1$, then $M_{n,\psi,\varphi}$ becomes $D_{n,\psi,\varphi}$. Moreover we have the following result which can be easily obtained.
	\end{enumerate}
	
\end{remark}

\begin{corollary}\label{Cor_3.6}
 Let $\varphi: \mathbb{D}\longrightarrow \mathbb{D}$ be an analytic self-map of $\mathbb{D}$ and $\psi\in \mathcal{H}(\mathbb{D})$ with $\psi \neq 0$ such that $D_{n, \psi, \varphi}$ is bounded on $\mathcal{H}_\gamma(\mathbb{D})$. Then $D_{n,\psi,\varphi}$ is complex symmetric with respect to the conjugation $\mathcal{C}_{\mu, \xi}$ if, and only if,
\begin{gather*}
\psi(z)=cz^{n}\big(1-\xi \varphi(0) z\big)^{-\gamma-n},
\end{gather*}
and 
\begin{gather*}
\varphi(z)=\varphi(0)+\big(1-\xi \varphi(0) z\big)^{-1}\varphi^{\prime}(0)z,~ \mbox{for}~ z\in \mathbb{D},
\end{gather*}
where $c\in \mathbb{C}$.
\end{corollary}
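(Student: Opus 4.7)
The plan is to derive this corollary as an immediate specialization of Theorem \ref{Thm_3}. The key observation, already flagged in the remark preceding the statement, is that setting $a_n = 1$ and $a_j = 0$ for $1 \leq j \leq n-1$ in the definition $M_{n,\psi,\varphi} = \sum_{j=1}^{n} a_j D_{j,\psi_j,\varphi}$ collapses the sum to the single term $D_{n,\psi_n,\varphi}$. After renaming $\psi_n$ as $\psi$ and $c_n$ as $c$, this is precisely the operator $D_{n,\psi,\varphi}$ in the corollary, and the boundedness hypothesis carries over.

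For necessity, I would invoke Theorem \ref{Thm_3}: if $D_{n,\psi,\varphi}$ is complex symmetric with respect to $\mathcal{C}_{\mu,\xi}$, then viewing it as an instance of $M_{n,\psi,\varphi}$ (with the only nontrivial coefficient at index $j=n$), the theorem produces the representation $\psi_j(z) = c_j z^j (1-\xi\varphi(0)z)^{-\gamma-j}$ for each $j$ together with the Möbius-type formula for $\varphi$. Only the $j=n$ identity is relevant, so reading it off gives $\psi(z) = c z^n (1-\xi\varphi(0)z)^{-\gamma-n}$ along with the required expression for $\varphi$.

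For sufficiency, I would substitute the stated forms of $\psi$ and $\varphi$ into the identity $D_{n,\psi,\varphi}\mathcal{C}_{\mu,\xi}K_w = \mathcal{C}_{\mu,\xi}D_{n,\psi,\varphi}^*K_w$ and verify it on reproducing kernels, mimicking the $j = n$ slice of the kernel computation done in the converse of Theorem \ref{Thm_3}. Since $\operatorname{span}\{K_w : w \in \mathbb{D}\}$ is dense in $\mathcal{H}_\gamma(\mathbb{D})$, the equality propagates to every element of the space, yielding $D_{n,\psi,\varphi} = \mathcal{C}_{\mu,\xi}D_{n,\psi,\varphi}^*\mathcal{C}_{\mu,\xi}$.

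There is essentially no obstacle, only a small bookkeeping point: Theorem \ref{Thm_3} assumes $\psi_j \neq 0$ for each $j$, whereas the corollary only requires $\psi = \psi_n \neq 0$. Because $a_j D_{j,\psi_j,\varphi} \equiv 0$ whenever $a_j = 0$ regardless of $\psi_j$, the inactive weights $\psi_1,\dots,\psi_{n-1}$ may be assigned any nonzero analytic values (for instance $\psi_j(z)= z^j(1-\xi\varphi(0)z)^{-\gamma-j}$) to meet the hypothesis of Theorem \ref{Thm_3} without altering the operator, so the reduction is fully legitimate.
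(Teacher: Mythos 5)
Your proposal is correct and follows exactly the route the paper intends: the corollary is obtained from Theorem \ref{Thm_3} by taking $a_n=1$ and $a_1=\dots=a_{n-1}=0$, which is precisely what the remark preceding the statement indicates (the paper gives no further proof). Your extra bookkeeping point --- that the inactive weights $\psi_1,\dots,\psi_{n-1}$ must be assigned nonzero analytic values of the form required by the theorem's conclusion so that its hypotheses and its ``only if'' assertion remain consistent when some $a_j=0$ --- is a genuine subtlety the paper glosses over, and you resolve it correctly.
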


\begin{remark}
	\begin{enumerate}
		\item [(i)] Complex symmetric structure of the weighted composition differentiation operator $D_{n,\psi,\varphi}$ on $\mathcal{H}_\gamma(\mathbb{D})$ with respect to other conjuations already been established in \cite{Lo_Loh_JMAA_2023}.
	\item[(ii)] Similarly, if we set $\gamma=1(\mbox{or}~ 2 ~\mbox{or}~ \alpha+2)$ in Corollary \ref{Cor_3.6}, we obtain explicit forms of $\psi$ and $\varphi$ for which $D_{n,\psi,\varphi}$ is complex symmetric with respect to the conjugation $\mathcal{C}_{\mu, \xi}$ on the Hardy and Bergman space, see \cite{Allu_Haldar_Pal_2023, Han_Wang_JMAA_2021,Han_Wang_CAOT_2021} for more details. 
\item[(iii)]  If we put ${n}=0$, then our composition differentiation operator $D_{n,\psi,\varphi}$ reduces to the weighted composition operfator $D_{\psi,\varphi}$. So in a similar way, we can obtain the complex symmetric structure of the composition operator on the unit disk $\mathbb{D}$, which has been proved by Lim and Khoi \cite{Lim_Khoi_JMAA_2018}.
	\end{enumerate}	
\end{remark}

 In Theorem \ref{Thm_4}, we obtain a condition that is both necessary and sufficient for the  bounded generalized composition differentiation operator $M_{n,\psi,\varphi}$ to satisfy self-adjoint properties on  $\mathcal{H}_\gamma(\mathbb{D})$.

\begin{theorem}\label{Thm_4}
	Let $\varphi: \mathbb{D}\longrightarrow \mathbb{D}$ be an analytic self-map of $\mathbb{D}$ and $\psi_j (j=1, 2, \dots, n)$ be non-zero analytic functions on the unit disk such that $M_{n, \psi, \varphi}=\displaystyle\sum_{j=1}^{n}a_jD_{j,\psi_j, \varphi} $ is bounded on $\mathcal{H}_\gamma(\mathbb{D})$ and $a_j\in \mathbb{R}$ for $j=1, 2, \dots, n$. Then $M_{n,\psi,\varphi}$ is Hermitian if, and only if,
	\begin{gather*}
	\psi_j(z)=c_jz^{j}\big(1- \overline{\varphi(0)} z\big)^{-\gamma-j},
	\end{gather*}
	and 
	\begin{gather*}
	\varphi(z)=\varphi(0)+\big(1- \overline{\varphi(0)} z\big)^{-1}\overline{\varphi^{\prime}(0)}z,~ \mbox{for}~ z\in \mathbb{D},
	\end{gather*}
	where $c_j, \varphi^\prime(0)\in \mathbb{R}$, $j=1, \dots, n$.
\end{theorem}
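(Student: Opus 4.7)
The plan is to mirror the arguments of Theorem 2.2 (the polydisk self-adjoint result) and Theorem 3.1 (the complex-symmetric characterization of $M_{n,\psi,\varphi}$). The key reduction is that, since the span of reproducing kernels is dense in $\mathcal{H}_\gamma(\mathbb{D})$, self-adjointness is equivalent to $M_{n,\psi,\varphi}K_w(z) = M_{n,\psi,\varphi}^{*}K_w(z)$ for all $z,w \in \mathbb{D}$. Using $K_w(z) = (1-\overline{w}z)^{-\gamma}$, the preceding lemma, and the hypothesis $a_j \in \mathbb{R}$, this identity reads
$$\sum_{j=1}^{n} a_j(\gamma)_j\,\psi_j(z)\,\overline{w}^{\,j}\bigl(1-\overline{w}\varphi(z)\bigr)^{-\gamma-j} \;=\; \sum_{j=1}^{n} a_j(\gamma)_j\,\overline{\psi_j(w)}\,z^{j}\bigl(1-z\overline{\varphi(w)}\bigr)^{-\gamma-j},$$
where $(\gamma)_j := \gamma(\gamma+1)\cdots(\gamma+j-1)$.

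My next step is to separate this identity into $n$ independent relations indexed by $j$. Expanded in powers of $\overline{w}$ near $0$, the right-hand side contributes $\overline{\psi_j(0)}$-terms at order $\overline{w}^{0}$, whereas the left-hand side starts at order $\overline{w}^{1}$; matching coefficients forces $\psi_j(0)=0$ for every $j$ with $a_j \neq 0$. Writing $\psi_j(z) = z^{m_j} g_j(z)$ with $g_j(0)\neq 0$ and running the case analysis $m_j > j$ versus $m_j < j$ as in the proof of Theorem 2.2, one concludes $m_j = j$, and is left with the clean separated relation
$$g_j(z)\bigl(1-\overline{w}\varphi(z)\bigr)^{-\gamma-j} = \overline{g_j(w)}\bigl(1-z\overline{\varphi(w)}\bigr)^{-\gamma-j}, \qquad j = 1,\ldots,n.$$

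Setting $w=0$ yields $g_j(z) = \overline{g_j(0)}(1-\overline{\varphi(0)}z)^{-\gamma-j}$, so with $c_j := \overline{g_j(0)}$ one obtains $\psi_j(z) = c_j z^{j}(1-\overline{\varphi(0)}z)^{-\gamma-j}$. Setting $z=0$ in the same separated relation (with $w$ free) then forces $c_j = \overline{c_j}$, i.e.\ $c_j \in \mathbb{R}$. For $\varphi$, I substitute this formula back, differentiate in $\overline{w}$, and evaluate at $z=0$, following the same differentiation scheme used in Theorem 2.2; this produces the Möbius-type identity $\varphi(z) = \varphi(0) + \overline{\varphi'(0)}z/(1-\overline{\varphi(0)}z)$, and differentiating this at $z=0$ gives $\varphi'(0) = \overline{\varphi'(0)} \in \mathbb{R}$. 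The converse is a direct verification: plug the derived $\psi_j$ and $\varphi$ back into both $M_{n,\psi,\varphi}K_w$ and $M_{n,\psi,\varphi}^{*}K_w$, and observe an algebraic symmetry identical in spirit to equation \eqref{Them_2_Eqn_16}, proving equality on the dense set of reproducing kernels.

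The main obstacle I anticipate is the term-by-term separation. The factors $\overline{w}^{j}$ on the left and $z^{j}$ on the right make the leading behaviors distinguishable, but a rigorous argument requires an induction on $j$: extract the relation for $\psi_1$ from the lowest-order coefficient in $\overline{w}$, subtract its contribution, and iterate to peel off $\psi_2, \psi_3, \ldots, \psi_n$ successively. This bookkeeping is the only genuine departure from the single-term arguments of Theorems 2.2 and 3.1; once the separation is established, the remainder is essentially a transcription of those proofs with the external conjugation $\mathcal{C}_{\mu,\xi}$ replaced by ordinary complex conjugation.
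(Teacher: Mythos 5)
Your outline reproduces the paper's proof of Theorem \ref{Thm_4} essentially step for step: the same reduction to the kernel identity, the same factorization $\psi_j(z)=z^{m}g_j(z)$ with the two cases $m>j$ and $m<j$, the same evaluation at $w=0$ to find $g_j$, the same differentiation in $\overline{w}$ at $w=0$ to obtain the M\"obius form of $\varphi$, and the same reality arguments for $c_j$ and $\varphi'(0)$; the converse is the same direct verification. The one place where you go beyond the paper is the term-by-term separation of
\begin{equation*}
\sum_{j=1}^{n}a_j(\gamma)_j\,\psi_j(z)\,\overline{w}^{\,j}\bigl(1-\overline{w}\varphi(z)\bigr)^{-\gamma-j}=\sum_{j=1}^{n}a_j(\gamma)_j\,\overline{\psi_j(w)}\,z^{j}\bigl(1-z\overline{\varphi(w)}\bigr)^{-\gamma-j}
\end{equation*}
into the $n$ single-index relations \eqref{Them_4_Eqn_5_5}: the paper simply writes ``which implies that'' and passes to the separated identity with no argument, whereas you correctly identify this as the crux of the whole proof.

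However, the peeling induction you propose does not work as described. The coefficient of $\overline{w}^{1}$ on the left-hand side is indeed $a_1\gamma\,\psi_1(z)$ (only the $j=1$ term contributes), but on the right-hand side, once you know $\psi_j(0)=0$ so that $\overline{\psi_j(w)}=\overline{\psi_j'(0)}\,\overline{w}+O(\overline{w}^{2})$, the coefficient of $\overline{w}^{1}$ is
\begin{equation*}
\sum_{j=1}^{n}a_j(\gamma)_j\,\overline{\psi_j'(0)}\,z^{j}\bigl(1-\overline{\varphi(0)}z\bigr)^{-\gamma-j},
\end{equation*}
which entangles \emph{all} the indices $j$ through the unknown constants $\overline{\psi_j'(0)}$. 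You therefore do not recover the $j=1$ relation at lowest order, and there is no well-defined ``contribution of $\psi_1$'' to subtract before iterating: every subsequent coefficient of $\overline{w}^{k}$ likewise mixes Taylor data of all the $\psi_j$ and of $\varphi$ at the origin. Note that the separation cannot be a formal consequence of Hermiticity of the sum alone --- a sum of operators can be Hermitian with no summand Hermitian (e.g.\ $A+A^{*}$) --- so a genuine structural argument exploiting the distinct differentiation orders is needed, and neither your sketch nor, for that matter, the paper supplies one. Everything downstream of the separated relation in your proposal is sound and matches the paper.
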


\begin{proof}
		Let $M_{n, \psi, \varphi}$ be Hermitian. 
	Then we have
	\begin{align}\label{Them_4_Eqn_5_2}
	M_{n,\psi, \varphi}^*K_w(z)= M_{n,\psi, \varphi}K_w(z)
	\end{align}
	for all $z,w\in \mathbb{D}$.
	A simplification in \eqref{Them_4_Eqn_5_2} gives
	\begin{align}\label{Them_4_Eqn_5_3}
	M_{n,\psi, \varphi}K_w(z)&=\displaystyle\sum_{j=1}^{n}a_jD_{j,\psi_j, \varphi} K_w(z)\nonumber\\
	&=\displaystyle\sum_{j=1}^{n}a_j(\gamma+j-1)! \psi_j(z)\overline{w}^{j}(1-\overline{w}\varphi(z))^{-\gamma-j}.
	\end{align}

	and 
	
	\begin{align}\label{Them_4_Eqn_5_4}
	 M_{n,\psi, \varphi}^*K_w(z)&=\displaystyle\sum_{j=1}^{n}\overline{a_j\psi_j(w)}K_{\varphi(w)}^{[j]}(z)\nonumber\\
	&=\displaystyle\sum_{j=1}^{n}\overline{a_j\psi_j(w)}(\gamma+j-1)!z^{j}(1-\overline{\varphi(w)}z)^{-\gamma-j}.
	\end{align}
	By using \eqref{Them_4_Eqn_5_3} and \eqref{Them_4_Eqn_5_4} in \eqref{Them_4_Eqn_5_2}, we obtain 
	\begin{align*}
	\displaystyle\sum_{j=1}^{n}\overline{a_j \psi_j(w)}(\gamma+j-1)!z^{j}&(1- \overline{\varphi(w)}z)^{-\gamma-j}\\
	&=\displaystyle\sum_{j=1}^{n}a_j\psi_j(z)(\gamma+j-1)!\overline{w}^{j}(1-\varphi(z)\overline{w})^{-\gamma-j},
	\end{align*}
which implies that 
	
	\begin{equation}\label{Them_4_Eqn_5_5}
	\overline{ \psi_j(w)}z^{j}(1-\overline{\varphi(w)}z)^{-\gamma-j}=\psi_j(z)\overline{w}^{j}(1-\varphi(z)\overline{w})^{-\gamma-j}
	\end{equation}
	for all $z,w\in \mathbb{D}$ and $j=1,2, \dots, n$.	
	By setting $w=0$ in \eqref{Them_4_Eqn_5_5}, we get $\psi_j(0)=0$ for  $j=1,2, \dots, n$. Let $\psi_j(z)=z^mg_j(z)$, where $m\in \mathbb{N}$ and $g_j$ is analytic on $\mathbb{D}$ with $g_j(0)\neq 0$. Now our aim is to show that $m=j$.
	\\ \\
	\underline{\bf Case-I:} If $m>j$, from \eqref{Them_4_Eqn_5_5}, it follows that
	
	\begin{equation}\label{Them_4_Eqn_5_6}
	z^{m-j}g_j(z)(1- \overline{w}\varphi(z))^{-\gamma-j}=\overline{w}^{m-j}\overline{g_j(w)}(1- z\overline{\varphi(w)})^{-\gamma-j}
	\end{equation}
	for all $z,w\in \mathbb{D}$ and $j=1,2, \dots, n$.
	Putting $w=0$ in \eqref{Them_4_Eqn_5_6}, we obtain $g_j(z)=0$ on $\mathbb{D}$, which contradicts to the fact that $g_j(0)\neq 0$. \\ \\	
		\underline{\bf Case-II:} If $m<j$, from \eqref{Them_4_Eqn_5_5}, it follows that
	
	\begin{equation}\label{Them_4_Eqn_5_7}
	\overline{w}^{j-m}g_j(z)(1-\overline{w}\varphi(z))^{-\gamma-j}=z^{j-m}\overline{g_j(w)}(1-z\overline{\varphi(w)})^{-\gamma-j}
	\end{equation}
	for all $z,w\in \mathbb{D}$ and $j=1,2, \dots, n$. By setting $w=0$ in \eqref{Them_4_Eqn_5_7}, we obtain $g_j(0)=0$, which contradicts the fact that $g_j(0)\neq 0$. Therefore, we must have $m=j$ and now \eqref{Them_4_Eqn_5_5} is reduces to 
	
	\begin{equation}\label{Them_4_Eqn_5_8}
	g_j(z)(1- \overline{w}\varphi(z))^{-\gamma-j}=\overline{g_j(w)}(1- z\overline{\varphi(w)})^{-\gamma-j}
	\end{equation}
	for all $z,w\in \mathbb{D}$.\\
	
	By setting $w=0$ in \eqref{Them_4_Eqn_5_8}, we obtain
	
	\begin{equation}\label{Them_4_Eqn_5_9}
	g_j(z)=\overline{g_j(0)}(1- z\overline{\varphi(0)})^{-\gamma-j}
	\end{equation}
	for $j=1,2, \dots, n$.	
	Thus, we obtain
	\begin{align*}
	\psi_j(z)&=z^jg_j(z)\\
	&=z^j\overline{g_j(0)}(1-z\overline{\varphi(0)})^{-\gamma-j}\\
	&=c_jz^j(1- z\overline{\varphi(0)})^{-\gamma-j},
	\end{align*}
	where $c_j=\overline{g_j(0)}, j=1, \dots, n$.	
	By substituting the value of $\psi_j(z)$ in $\eqref{Them_4_Eqn_5_5}$, we obtain
	
	\begin{equation}\label{Them_4_Eqn_5_10}
	\overline{c_j}(1- z\overline{\varphi(0)})^{\gamma+j}(1-\overline{ w}\varphi(z))^{\gamma+j}=c_j(1- \overline{w}\varphi(0))^{\gamma+j}(1- z\overline{\varphi(w)})^{\gamma+j}
	\end{equation}
	for all $z,w\in \mathbb{D}$.
	If we put $w=0$ in \eqref{Them_4_Eqn_5_10}, then it is not very difficult to see that $c_j=\overline{c_j}$ and hence $c_j\in \mathbb{R}$ for all $j=1, 2, \dots, n.$
	Differentiating both sides of \eqref{Them_4_Eqn_5_10} with respect to $\overline{w}$, we obtain
	\begin{align}\label{EQ_3.22}
	(\gamma+j)(1- z\overline{\varphi(0)})^{\gamma+j}&(1-\overline{ w}\varphi(z))^{\gamma+j-1}(\varphi(z))\nonumber\\
	&=(\gamma+j)(1-\overline{w}\varphi(0))^{\gamma+j-1}(1- z\overline{\varphi(w)})^{\gamma+j}(\varphi(0))\nonumber\\
	&+(\gamma+j)(1-\overline{w}\varphi(0))^{\gamma+j}(1- z\overline{\varphi(w)})^{\gamma+j-1}(z\overline{\varphi^\prime(w)}).
	\end{align}
	
	Putting $w=0$ in \eqref{EQ_3.22}, we obtain
	\begin{align*}
	(1-z\overline{\varphi(0)})^{\gamma+j}\varphi(z)
=(1- z\overline{\varphi(0)})^{\gamma+j}\times\varphi(0)
	+(1-z\overline{\varphi(0)})^{\gamma+j-1}\times(z\overline{\varphi^\prime(0)}).
	\end{align*} 
	Thus,
	\begin{align}\label{Them_4_Eqn_5_13}
	\varphi(z)&=\varphi(0)+\frac{z\overline{\varphi^\prime(0)}}{1-z\overline{\varphi(0)}}.	
	\end{align}
	Differentiating \eqref{Them_4_Eqn_5_13} with respect to $z$, we obtain
			\begin{align}\label{Them_4_Eqn_5_14}
	\varphi^\prime(z)&=\frac{\overline{\varphi^\prime(0)}}{(1-z\overline{\varphi(0)})^2}.	
	\end{align}
	Thus, $	\varphi^\prime(0)=\overline{\varphi^\prime(0)}$, which implies $\varphi^\prime(0)$ is a real number.\\

	Conversly, let us assume that  
	\begin{gather*}
	\psi_j(z)=c_jz^{j}\big(1-\overline{\varphi(0)} z\big)^{-\gamma-j},
	\end{gather*}
	where $c_1, c_2, \dots, c_n \in \mathbb{R}$ and for $j=1, 2, \dots, n$,

	and
	\begin{align*}
	\varphi(z)&=\varphi(0)+\frac{z\overline{\varphi^\prime(0)}}{1-z\overline{\varphi(0)}}.	
	\end{align*}

	Now, using \eqref{Them_4_Eqn_5_3} and \eqref{Them_4_Eqn_5_4}, we obtain
	\begin{align}\label{Them_4_Eqn_5_11}
	&M_{n,\psi, \varphi}K_w(z)
	=\displaystyle\sum_{j=1}^{n}a_j(\gamma+j-1)! \psi_j(z)\overline{w}^{j}(1- \overline{w}\varphi(z))^{-\gamma-j}\nonumber\\
		&=\displaystyle\sum_{j=1}^{n}a_j(\gamma+j-1)! c_j(z\overline{w})^{j}\left(1-\overline{w}\left(\varphi(0)+\frac{z\overline{\varphi^\prime(0)}}{1-z\overline{\varphi(0)}}\right)\right)^{-\gamma-j}(1- z\overline{\varphi(0)})^{-\gamma-j}\nonumber\\
	&=\displaystyle\sum_{j=1}^{n}a_jc_j(\gamma+j-1)! (z\overline{w})^{j}\left(1-\overline{\varphi(0)}z-\varphi(0)\overline{w}+\overline{w}\varphi(0)\overline{\varphi(0)}z-\overline{w}z\overline{\varphi^\prime(0)}\right)^{-\gamma-j}.
	\end{align}

	Similarly,
		
	\begin{align}\label{Them_4_Eqn_5_12}
	& M_{n,\psi, \varphi}^*K_w(z)
	=\displaystyle\sum_{j=1}^{n}\overline{a_j\psi_j(w)}(\gamma+j-1)! z^{j}(1-z\overline{\varphi(w)})^{-\gamma-j}\nonumber\\
	&=\displaystyle\sum_{j=1}^{n}\overline{a_jc_j}(\gamma+j-1)!\overline{w}^jz^{j}\left(1-z\left(\overline{\varphi(0)}+\frac{\overline{w}\varphi^\prime(0)}{1-\overline{w}\varphi(0)}\right)\right)^{-\gamma-j}(1- w\overline{\varphi(0)})^{-\gamma-j}\nonumber\\
	&=\displaystyle\sum_{j=1}^{n}\overline{a_jc_j}(\gamma+j-1)!(\overline{w}z)^{j}\left(1-\overline{w}\varphi(0)-z\overline{\varphi(0)}+z\overline{\varphi(0)}\varphi(0)\overline{w}-z\overline{w}\varphi^\prime(0)\right)^{-\gamma-j}.
	\end{align}

Since $a_j, c_j$ and $\varphi^\prime(0)$ are reals, hence it follows from \eqref{Them_4_Eqn_5_11} and \eqref{Them_4_Eqn_5_12} that $M_{n,\psi, \varphi}K_w(z)
	= M_{n,\psi, \varphi}^*K_w(z)$ for all $z\in \mathbb{D}$. Since the span of the reproducing kernel functions is dense in $\mathcal{H}_\gamma(\mathbb{D})$, so $M_{n,\psi, \varphi}^*= M_{n,\psi, \varphi}$ which gives that $M_{n,\psi, \varphi}$ is Hermitian. This completes the proof.
\begin{remark}
		\begin{enumerate}
			\item[(i)] 	As a special case of our result ({\it i.e.,} if we set $\gamma=\alpha+2$), we obtain the Hermitian property of the generalized composition differentiation operator on the weighted Bergman space on the unit disk, which was recently proved by Ahamed and Rahman \cite[Theorem 2.3]{Ahamed_Rahman_2023}.
			\item[(ii)] It is not difficult to see that, if $a_1=a_2=\dots=a_{n-1}=0$ and $a_n=1$, then $M_{n,\psi,\varphi}$ reduces $D_{n,\psi,\varphi}$. Moreover, we obtain the following result which can be easily derived.
		\end{enumerate}
		\end{remark}
	
	\begin{corollary}\label{Cor_3.6_1}\cite[Theorem 4.1]{Lo_Loh_JMAA_2023}
		Let $\varphi: \mathbb{D}\longrightarrow \mathbb{D}$ and $\psi:\mathbb{D}\longrightarrow \mathbb{C} $ be an analytic functions. Then $D_{n,\psi,\varphi}:\mathcal{H}_\gamma\rightarrow \mathcal{H}_\gamma$ is Hermitian if, and only if,
		\begin{gather*}
		\psi(z)=cz^{n}\big(1-\overline{\varphi(0)} z\big)^{-\gamma-n},
		\end{gather*}
		and 
		\begin{gather*}
		\varphi(z)=\varphi(0)+\big(1-\overline{\varphi(0)} z\big)^{-1}\overline{\varphi^{\prime}(0)}z,~ \mbox{for}~ z\in \mathbb{D},
		\end{gather*}
where	$c, \varphi^\prime(0)\in \mathbb{R}$.
	\end{corollary}

\end{proof}

\section{Composition operators and their Berezin range}

Recall that ${\mathcal{L}}(\mathcal{H})$ be the $C^*$-algebra of all bounded linear operators on a complex Hilbert space $\mathcal{H}$ with inner product $\langle \cdot,\cdot\rangle$ and the corresponding norm $\|.\|$. 
The {\it numerical range} of $S\in
\mathcal{L(H)}$ is defined as
$ W(S)=\{\langle Sx, x \rangle: x\in \mathcal{H}, \|x\|=1
\}.$ 
The {\it numerical radius} of $S$ is
defined as $ w(S)=\displaystyle\sup\{|z|: z\in W(S) \}.$
The numerical radius
$w(\cdot)$ defines a norm on $\mathcal{H}$, and is analogous to the standard operator norm $\|S\|=\displaystyle \sup  \{ \|Sx \|: x\in \mathcal{H}, \|x\|=1 \}.$ In fact, for
every $S \in \mathcal{L(H)}$, the operator norm and the numerical radius is always comparable by the following inequality
$\frac{1}{2}\|S\|\leq w(S)\leq \|S\|.$
The numerical range of a bounded linear operator is convex, which is one of the most fundamental property of numerical range, which is called the \textit{Toeplitz-Hausdorff} Theorem. Other important properties are  the closure of $W(S)$ contains the spectrum of the operator $S$.
A Hilbert space $\mathcal{H}=\mathcal{H}(\Omega)$ of complex-valued functions on a nonempty open set $\Omega\subset \mathbb{C}$ which has the property that point evaluations are continuous, is called a  {\it functional Hilbert space}. 
The reproducing kernel of a functional Hilbert space $\mathcal{H}$ with $\{e_n\}$ as an orthonormal basis is  $K_\lambda(z)=\displaystyle \sum_n \overline{e_n(\lambda)}e_n(z)$.
Let $\widehat{k}_\lambda=K_\lambda/\|K_\lambda\|$ be the nomalized reproducing kernel of $\mathcal{H}$, where  $\lambda \in \Omega$. The function $\widetilde{A}$ defined on $\Omega$ by $\widetilde{A}(\lambda)=\langle A\widehat{k}_\lambda, \widehat{k}_\lambda\rangle$ is the {\it Berezin symbol} of a bounded linear operator $A$ on $\mathcal{H}$.
{\it Berezin range} and  {\it Berezin number} of the operator $A$ are defined by
$ \mbox{Ber}(A)=\{\widetilde{A}(\lambda):\lambda\in \Omega\}~ \mbox{and} ~\mbox{ber}(A)=\mbox{sup}\{|\widetilde{A}(\lambda)|:\lambda\in \Omega\},$  respectively.
Clearly, the  Berezin symbol $\widetilde{A}$ is a bounded function on $\Omega$ whose values lie in the numerical range of the operator $A$, and hence $ \mbox{Ber}(A)\subseteq W(A) ~\mbox{and}~ \mbox{ber}(A)\leq w(A).$

\subsection{Inclusion of zero}
Bourdon and Shapiro \cite{Bourdon_Shapiro_IEOT_2002} have proved that for any composition operator $C_\varphi$ not the identity map, $0\in \overline{W(C_\varphi)}$. They also showed that origin lies in the interior of $W(C_\varphi)$, whenever $C_\varphi$ fails to have dense range; if $\varphi=0$, then $0$ is in the interior of $W(C_\varphi)$ unless $\varphi(z)=\lambda z$ for some $\lambda\in [-1, 1];$ if $\varphi$ fixes a nonzero point in $\mathbb{D}$ and is neither the identity map nor a positive conformal dilation, then origin is in the interior of $W(C_\varphi)$. Same results hold for weighted composition operator, (see \cite{Gunatillake_Jovovic_Smith_JMAA_2014}). \\ \\
In this subsection, we prove that  $0\in \overline{W\left(\displaystyle \sum_{j=1}^{n}C_{\psi_j,\varphi_j}\right)}$, where $\varphi_j$ are analytic self-map of $\mathbb{D}$ which are not the identity maps and $\psi_j\in \mathcal{H}(\mathbb{D})~(j=1, 2,\dots, n)$. Lemma \ref{Lem_4.1} deals with the action of the adjoint of the composition operator $M_{\psi, \varphi}$ on Hardy space $\mathcal{H}^2(\mathbb{D})$.
\begin{lemma}\label{Lem_4.1}
	Let $\varphi_{j{(1\leq j\leq n)}}$ be analytic self-map of $\mathbb{D}$ and $\psi_j\in \mathcal{H}(\mathbb{D})$ such that $M_{\psi, \varphi}=\displaystyle\sum_{j=1}^{n}C_{\psi_j, \varphi_j} $ is bounded on $\mathcal{H}^2(\mathbb{D})$.	
	Then, $$ M_{\psi, \varphi}^*K_w=\sum_{j=1}^{n}\overline{\psi_j(w)}K_{\varphi_j(w)}~\textnormal{for}~w\in \mathbb{D}.$$
\end{lemma}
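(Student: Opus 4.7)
The plan is to run the same reproducing-kernel computation already used earlier in the excerpt for $D_{\mathbf{n},\psi,\varphi}^*K_w$ and $M_{n,\psi,\varphi}^*K_w$, with the derivative slot removed and the single composition symbol $\varphi$ replaced by the $n$ distinct symbols $\varphi_j$. Since $M_{\psi,\varphi}$ is assumed bounded on $\mathcal{H}^2(\mathbb{D})$, its Hilbert-space adjoint exists, and I only need to identify which function in $\mathcal{H}^2(\mathbb{D})$ the adjoint sends $K_w$ to.

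First I would fix an arbitrary $f\in\mathcal{H}^2(\mathbb{D})$, move $M_{\psi,\varphi}$ across the inner product, and use linearity of the inner product across the finite sum to reduce $\langle M_{\psi,\varphi}f, K_w\rangle$ to $\sum_{j=1}^{n}\langle \psi_j\cdot(f\circ\varphi_j), K_w\rangle$. Two applications of the reproducing property — first at $w$ to convert each term into $\psi_j(w)\,f(\varphi_j(w))$, then at the point $\varphi_j(w)\in\mathbb{D}$ to rewrite $f(\varphi_j(w))$ as $\langle f, K_{\varphi_j(w)}\rangle$ — let me pull the scalars $\psi_j(w)$ into the conjugate slot and recognize the resulting sum as $\bigl\langle f,\; \sum_{j=1}^{n}\overline{\psi_j(w)}\,K_{\varphi_j(w)}\bigr\rangle$. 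Since $f$ was arbitrary, equating the two inner-product expressions gives the claimed formula for $M_{\psi,\varphi}^*K_w$.

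There is no substantive obstacle here: only the boundedness hypothesis (ensuring the adjoint is well defined), linearity across a finite sum, and the reproducing property of the Hardy-space kernel are invoked. In effect this lemma is the unweighted, multi-symbol analogue of the lemma already proved for $M_{n,\psi,\varphi}^*K_w$ on $\mathcal{H}_\gamma(\mathbb{D})$, and the write-up can follow that earlier proof almost verbatim.
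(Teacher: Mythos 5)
Your proposal is correct and follows essentially the same route as the paper's own proof: test against an arbitrary $f\in\mathcal{H}^2(\mathbb{D})$, use linearity over the finite sum, and apply the reproducing property twice (at $w$ and at $\varphi_j(w)$) to identify $M_{\psi,\varphi}^*K_w$. No gaps.
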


\begin{proof}
	Let $f\in \mathcal{H}^2(\mathbb{D})$, then we have
	\begin{align*}
	\langle f,  M_{\psi, \varphi}^*K_w \rangle_{\mathcal{H}^2(\mathbb{D})}&=  \langle  M_{\psi, \varphi}f, K_w \rangle_{\mathcal{H}^2(\mathbb{D})}\\
	&=\bigg\langle \sum_{j=1}^{n}C_{\psi_j, \varphi_j} , K_w \bigg\rangle_{\mathcal{H}^2(\mathbb{D})}\\
	&=\sum_{j=1}^{n}\bigg\langle \psi_j(fo \varphi_j) , K_w \bigg\rangle_{\mathcal{H}^2(\mathbb{D})}\\
	&=\sum_{j=1}^{n}\psi_j(w) f(\varphi_j(w)) \\
	&=\sum_{j=1}^{n}\bigg\langle  f, \overline{\psi_j(w)}K_{\varphi_j(w)} \bigg\rangle_{\mathcal{H}^2(\mathbb{D})}\\
	&=\bigg\langle  f, \sum_{j=1}^{n}\overline{\psi_j(w)}K_{\varphi_j(w)} \bigg\rangle_{\mathcal{H}^2(\mathbb{D})}.
	\end{align*}
	So, we have
	$$\bigg\langle f,  M_{\psi, \varphi}^*K_w \bigg\rangle_{\mathcal{H}^2(\mathbb{D})}=\bigg\langle  f,  \sum_{j=1}^{n}\overline{\psi_j(w)}K_{\varphi_j(w)} \bigg\rangle_{\mathcal{H}^2(\mathbb{D})}$$
	$\mbox{for~ all}~ f\in \mathcal{H}^2(\mathbb{D})$,	which leads the following
	$$ M_{\psi, \varphi}^*K_w=\sum_{j=1}^{n}\overline{\psi_j(w)}K_{\varphi_j(w)}.$$
\end{proof}
In the following theorem, we prove that the origin lies in the closure of the $W(M_{\psi, \varphi})$, where $M_{\psi, \varphi}=\displaystyle\sum_{j=1}^{n}C_{\psi_j, \varphi_j} $, $\varphi_{j{(1\leq j\leq n)}}$ are analytic self-maps of $\mathbb{D}$ which are not identities and $\psi_j\in \mathcal{H}(\mathbb{D})$. 
\begin{theorem}
		Let $\varphi_{j{(1\leq j\leq n)}}$ be analytic self-maps of $\mathbb{D}$ which are not identities and $\psi_j\in \mathcal{H}(\mathbb{D})$.
		Then $0\in \overline{W(M_{\psi, \varphi})}$.
		\end{theorem}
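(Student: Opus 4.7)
The approach is to compute the Berezin symbol
$\widetilde{M_{\psi,\varphi}}(w):=\langle M_{\psi,\varphi}\widehat{k}_w,\widehat{k}_w\rangle$
at the normalized reproducing kernels of $\mathcal{H}^{2}(\mathbb{D})$, and show that it tends to $0$ along a suitable sequence $w\to\partial\mathbb{D}$. Since $\mathrm{Ber}(M_{\psi,\varphi})\subseteq W(M_{\psi,\varphi})$, this will place $0$ in $\overline{W(M_{\psi,\varphi})}$.

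First I would apply Lemma~\ref{Lem_4.1}. Using $K_w(z)=(1-\overline{w}z)^{-1}$, $\|K_w\|^{2}=(1-|w|^{2})^{-1}$, and $\langle K_w,K_v\rangle=(1-\overline{w}v)^{-1}$, a short computation gives
\[
\widetilde{M_{\psi,\varphi}}(w)=\frac{\langle K_w,M_{\psi,\varphi}^{*}K_w\rangle}{\|K_w\|^{2}}=(1-|w|^{2})\sum_{j=1}^{n}\frac{\psi_j(w)}{1-\overline{w}\,\varphi_j(w)}.
\]

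Next, the plan is to exploit the hypothesis that each $\varphi_j$ differs from the identity. Since $\varphi_j\in H^{\infty}(\mathbb{D})$ and $\varphi_j\not\equiv\mathrm{id}$, the classical boundary uniqueness theorem for bounded analytic functions forces the coincidence set $E_j:=\{\zeta\in\partial\mathbb{D}\colon\varphi_j^{*}(\zeta)=\zeta\}$ to have Lebesgue measure zero; the union $E:=\bigcup_{j=1}^{n}E_j$ is then also null. Pick $\zeta\in\partial\mathbb{D}\setminus E$ at which every $\varphi_j$ and every $\psi_j$ admits a finite radial limit, and set $w_r=r\zeta$ for $r\in(0,1)$. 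As $r\to 1^{-}$, each denominator converges to $1-\overline{\zeta}\,\varphi_j^{*}(\zeta)\neq 0$ while $\psi_j(w_r)\to\psi_j^{*}(\zeta)\in\mathbb{C}$, so each summand $\psi_j(w_r)/(1-\overline{w_r}\,\varphi_j(w_r))$ stays bounded. The prefactor $(1-r^{2})\to 0$ then forces $\widetilde{M_{\psi,\varphi}}(w_r)\to 0$, which yields $0\in\overline{\mathrm{Ber}(M_{\psi,\varphi})}\subseteq\overline{W(M_{\psi,\varphi})}$.

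The main obstacle is guaranteeing that every individual $\psi_j$ admits finite radial limits on a set of full measure in $\partial\mathbb{D}$, since only $\sum_{j}\psi_j=M_{\psi,\varphi}\mathbf{1}$ is immediately in $\mathcal{H}^{2}$ from the boundedness hypothesis. After combining terms with repeated symbols (so that the $\varphi_j$ may be assumed distinct), testing $M_{\psi,\varphi}$ against the monomials $1,z,\dots,z^{n-1}$ gives a linear system $\bigl[\sum_j\varphi_j^{k}\psi_j\bigr]_{k=0}^{n-1}\in\mathcal{H}^{2}$, with a Vandermonde-type coefficient matrix whose generic invertibility in $\mathbb{D}$ extracts each $\psi_j$ as a finite combination of $\mathcal{H}^{2}$ functions divided by an analytic non-vanishing factor, thereby placing every $\psi_j$ in a Hardy-type class with almost-everywhere radial limits by Fatou's theorem. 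With this input, the computation in the previous paragraph produces the required null sequence and completes the proof.
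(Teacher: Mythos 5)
Your computation of the Berezin symbol at the normalized kernels and your use of the boundary uniqueness theorem to locate $\xi\in\partial\mathbb{D}$ with $\varphi_j^{*}(\xi)\neq\xi$ for every $j$ is exactly the route the paper takes (the paper's displayed formula carries a spurious conjugate on $\psi_j$; your version is the correct one, and the difference is immaterial for the limit). Where you genuinely diverge is in your final paragraph, and the divergence is valuable: the paper only observes that $M_{\psi,\varphi}1=\sum_{j}\psi_j\in\mathcal{H}^{2}(\mathbb{D})$, so that this \emph{sum} has finite radial limits almost everywhere, and then immediately concludes $\lambda_{r\xi}\to 0$. For $n>1$ this is a gap, because the Berezin symbol is a sum of terms $\psi_j(r\xi)\,(1-r^{2})/(1-\overline{r\xi}\,\varphi_j(r\xi))$ with \emph{different} denominators, and control of $\sum_j\psi_j(r\xi)$ does not control the individual summands. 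Your Vandermonde device --- merging repeated symbols, applying $M_{\psi,\varphi}$ to $1,z,\dots,z^{n-1}$, and solving for the weights --- closes this gap: by Cramer's rule each $\psi_j$ is a quotient of an $\mathcal{H}^{2}$ function by a not-identically-zero $H^{\infty}$ function, hence lies in the Nevanlinna class and admits finite radial limits almost everywhere. Two small points of hygiene: the boundary-value theorem you need here is the Nevanlinna-class version rather than Fatou's theorem for $H^{p}$, and the Vandermonde determinant may vanish on a discrete subset of $\mathbb{D}$, which is harmless precisely because the quotient is known a priori to be analytic (so the identity $\psi_j\det A=\det A_j$ propagates by analytic continuation). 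In short, your proof is the paper's proof plus a genuine repair of its weakest step.
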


\begin{proof}
Let $\widehat{k}_z=\frac{K_z}{\|K_z\|}$ where $z\in \mathbb{D}$ and $K_z$ is the reproducing kernel at $z$. Let $$\lambda_z=\langle M_{\psi, \varphi}\widehat{k}_z, \widehat{k}_z \rangle=\frac{\langle M_{\psi, \varphi} K_z,~ K_z\rangle }{\|K_z\|^2}$$ be a point of $W(M_{\psi, \varphi})$. For each point $z\in \mathbb{D}$, we have
\begin{align}\label{Eq_Numerical_range}
\lambda_z&=\frac{\langle M_{\psi, \varphi} K_z,~ K_z\rangle }{\|K_z\|^2}\nonumber\\
&=\frac{\langle  K_z,~ M_{\psi, \varphi}^*K_z\rangle }{\|K_z\|^2}\nonumber\\
&=(1-|z|^2)\langle K_z, \sum_{j=1}^{n}\overline{\psi_j(z)}K_{\varphi_j(z)}\rangle\nonumber\\
&=(1-|z|^2)\left(\sum_{j=1}^{n}\overline{\psi_j(z)}K_z(\varphi_j(z))\right)\nonumber\\
&=\sum_{j=1}^{n}\overline{\psi_j(z)}\frac{(1-|z|^2)}{(1-\overline{z}\varphi_j(z))},
\end{align}
for $z\in \mathbb{D}$. 
 Also, $M_{\psi, \varphi}(1)=\psi\in \mathcal{H}^2$, and $\lim\limits_{r\rightarrow 1^-}\psi(r\mu)$ exists for almost all $\mu \in \mathbb{D}$ and 
 since $\varphi_{j{(1\leq j\leq n)}}$ are not identity maps, its boundary function differs from the identity almost everywhere. Therefore there exists $\xi \in \partial\mathbb{D}$ so that $\lim\limits_{r\rightarrow 1^-}\psi(r\xi)$ exists finitely and $\lim\limits_{r\rightarrow 1^-}\varphi_j(r\xi)\neq \xi$. Hence, from \eqref{Eq_Numerical_range}, we have $\lambda_{r\xi}\rightarrow 0$ as $r\rightarrow 1^-$, which implies that $0\in \overline{W(M_{\psi, \varphi})}$. 
\end{proof}

\subsection{Convexity of the Berezin range of composition operator}
The Hardy space \index{Hardy space} $\mathcal{H}^2(\mathbb{D})$ consisting of the functions $f(z)=\sum_{n=0}^\infty \widehat{f}(n)z^n$ holomorphic in the open unit disk $\mathbb{D}$ such that $\|f\|^2=\sum_{n=0}^\infty|\widehat{f}(n)|^2<\infty$, where $\widehat{f}(n)$ denotes the $n$-th Taylor coefficient of $f$. Let $\varphi$ be a holomorphic self-map of $\mathbb{D}$, then the equation $C_\varphi(f)=f\circ \varphi$ defines a composition operator $C_\varphi$ with inducing map $\varphi$. 
These operators are appreciated by many and have a significant and deep roots in operator and function theory (see \cite{Cowen_MacCluer_1995, Shapiro_1993}). Berger and Coburn \cite{Berger_Coburn_1986} asked: {\it if the Berezin symbol of an operator on the Hardy or Bergman space vanishes on the boundary of the disk, must the operator be compact? }
Nordgren and Rosenthal \cite{Nordgren_Rosenthal_1992} have demonstrated, on a so-called standard reproducing kernel Hilbert space, that if the Berezin symbols of all unitary equivalents of an operator vanish on the boundary, then the operator is compact. The counterexamples presented come in the form of composition operators.
Another significant finding from Axler and Zheng \cite{Axler_Zheng_1998}, is that if $S$ is a finite sum of finite products of Toeplitz operators acting on the Bergman space of the unit disk, then $S$ is compact iff the Berezin symbol of $S$ vanishes as it approaches the boundary of the disk. 
One motivation for studying the Berezin range of these operators is that they often elude  Axler-Zheng type results; e.g. there are composition operators such that $\widetilde{C}_\varphi(z) \rightarrow 0$ as $ z \rightarrow \partial \mathbb{D}$, but $C_\varphi$ is not compact.
From \textit{Toeplitz-Hausdorff} theorem, the natural question arises in our mind: \\ \\
{\bf Question-$3$:} {\bf Given a bounded operator acting on reproducing kernel Hilbert space, is it true that Berezin range of operator convex?}\\

By \textit{Toeplitz-Hausdorff} Theorem, the numerical range of an operator is always convex \cite{Gustafson_PAMS_1970}. It is not very difficult to observe that the Berezin range of an operator always a subset of the numerical range. In general the Berezin range of an operator need not be convex. Karaev \cite{Karaev} have initiated the study of the geometry of the Berezin range. In fact Karaev \cite{Karaev} have proved that the Berezin range of the Model operator $M_{z^n}$ on the model space is convex.  Motivated by this, recently Cowen and Felder \cite{CowenFelder} explored the convexity of the Berezin range for matrices, multiplication operators on reproducing kernel Hilbert space, composition operators acting on Hardy space of the unit disc. In light of the Cowen-Felder's results, Augustine {\it et al.} \cite{Augustine_Garayev_Shankar_CAOT_2023} have characterized the convexity of the Berezin range of composition operators acting on Bergman space over the unit disk.  
The Berezin transform is most useful on any reproducing kernel Hilbert space (RKHS), and the Berezin range may naturally be more relevant in this context. Now, we propose the following problem.
\\ \\
{\bf Question-$4$:  What can be said about the convexity of the Berezin range of special class of operators acting on $\mathcal{H}_\gamma(\mathbb{D})$?}
\\
We start by investigating the convexity of the Berezin range of composition operator on $\mathcal{H}_\gamma(\mathbb{D})$.

Among the composition operator results presented in \cite{Augustine_Garayev_Shankar_CAOT_2023, CowenFelder}, the main obstacle to obtaining analogous results in the reproducing kernel Hilbert space $\mathcal{H}_\gamma(\mathbb{D})$ is the greater complexity of the reproducing kernel, which is given by $ K_w(z)=(1-\overline{w}z)^{-\gamma}.$
Here, we obtain the values of $\beta$ for which the Berezin range of $C_\varphi$ is convex.
\subsubsection{\bf Elliptic symbols}
Theorem \ref{Thm_Convexity_1} describes the convexity of the Berezin range of composition operators in terms of elliptic automorphisms. 
\begin{theorem}\label{Thm_Convexity_1}
	Let $\gamma \in \mathbb{N},~ \beta\in \overline{\mathbb{D}}$ and $\varphi(w)=\beta w$. Then Berezin range of $C_\varphi$ acting on $\mathcal{H}_\gamma(\mathbb{D})$ is convex if, and only if, $-1\leq \beta\leq 1$.
\end{theorem}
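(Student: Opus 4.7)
The plan is to reduce everything to a single real-analytic scalar function of $t=|w|^{2}\in[0,1)$, treating the two directions of the equivalence separately. First I would compute the Berezin symbol explicitly. Using the normalized kernel $\widehat{k}_{w}(z)=(1-|w|^{2})^{\gamma/2}(1-\overline{w}z)^{-\gamma}$ together with the identity $\langle f,\widehat{k}_{w}\rangle=(1-|w|^{2})^{\gamma/2}f(w)$, a direct calculation gives
\begin{equation*}
\widetilde{C_{\varphi}}(w)=(1-|w|^{2})^{\gamma/2}\bigl(C_{\varphi}\widehat{k}_{w}\bigr)(w)=\left(\frac{1-|w|^{2}}{1-\beta|w|^{2}}\right)^{\!\gamma}.
\end{equation*}
Since the right-hand side depends only on $t:=|w|^{2}\in[0,1)$, we have $\mathrm{Ber}(C_{\varphi})=h([0,1))$ where $h(t):=\bigl((1-t)/(1-\beta t)\bigr)^{\gamma}$.

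For sufficiency, suppose $\beta\in[-1,1]$. Then $h$ is real-valued: if $\beta=1$ then $h\equiv 1$ and the range is $\{1\}$; otherwise the inner function has derivative $(\beta-1)/(1-\beta t)^{2}<0$, so $h$ decreases strictly from $1$ down to $0$ and $\mathrm{Ber}(C_{\varphi})=(0,1]$. Both outcomes are convex subsets of $\mathbb{R}$.

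For the converse direction, assume $\beta\in\overline{\mathbb{D}}\setminus[-1,1]$; since $\overline{\mathbb{D}}\cap\mathbb{R}=[-1,1]$, this forces $\beta\notin\mathbb{R}$. The three key observations are $h(0)=1$, $h(t)\to 0$ as $t\to 1^{-}$, and $h'(0)=\gamma(\beta-1)\notin\mathbb{R}$. Suppose for contradiction that $\mathrm{Ber}(C_{\varphi})$ is convex; then so is its closure, which contains both $0$ and $1$. Because $h\colon[0,1)\to\mathbb{C}$ is smooth, its image has planar Lebesgue measure zero, so the closure of $\mathrm{Ber}(C_{\varphi})$ has empty interior in $\mathbb{R}^{2}$. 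A convex subset of $\mathbb{R}^{2}$ with empty interior is contained in an affine line, and the unique affine line passing through $0$ and $1$ is the real axis. Thus $h(t)\in\mathbb{R}$ for every $t\in[0,1)$, which forces $h'(0)\in\mathbb{R}$ and contradicts $h'(0)=\gamma(\beta-1)\notin\mathbb{R}$. The main obstacle is the structural step that a convex planar set with empty interior must lie in an affine line; once this is accepted, the proof collapses to the tangent-versus-endpoint test performed above, where the tangent $\gamma(\beta-1)$ at $h(0)=1$ is compared against the limiting endpoint $0$.
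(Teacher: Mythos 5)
Your proof is correct and follows essentially the same route as the paper: compute $\widetilde{C_\varphi}(w)=\bigl((1-|w|^2)/(1-\beta|w|^2)\bigr)^\gamma$, observe that the Berezin range is the image of a single curve in $t=|w|^2$, and deduce that convexity forces it to lie on the real segment joining the limit value $0$ to $h(0)=1$, whence $\beta$ must be real. Your converse is in fact tighter than the paper's: the measure-zero/empty-interior reduction of a convex planar set to an affine line, together with the explicit check $h'(0)=\gamma(\beta-1)\notin\mathbb{R}$, supplies exactly the justification the paper leaves implicit (its sentence ``which can happen if, and only if, the imaginary part of $\beta$ is zero'' is circular as written).
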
 
\begin{proof}
Let	$\beta\in \overline{\mathbb{D}}$ and $w\in \mathbb{D}$. Put $w=re^{i\theta}$, $0\leq r<1$, then we have
	\begin{align*}
\widetilde{C_\varphi}(w)&=\langle C_\varphi \widehat{k}_w, \widehat{k}_w \rangle\\
&=\frac{(1-|w|^2)^\gamma}{(1-|w|^2\beta)^\gamma}\\
&=\frac{(1-r^2)^\gamma}{(1-r^2\beta)^\gamma}.	
\end{align*}
If $\beta=1$, then $\widetilde{C_\varphi}(re^{i\theta})=\frac{(1-r^2)^\gamma}{(1-r^2\beta)^\gamma}=1$.
So, Ber$(C_\varphi)=\{1\}$, which is convex. Similarly for $\beta\in [-1, 1]$, we have
$\widetilde{C_\varphi}(re^{i\theta})=\frac{(1-r^2)^\gamma}{(1-r^2\beta)^\gamma}$ 
Therefore, $$Ber(C_\varphi)=\left\{\frac{(1-r^2)^\gamma}{(1-r^2\beta)^\gamma}: r\in [0, 1)\right\}=(0, 1],$$ which is also convex.
Conversly, suppose that Ber$(C_\varphi)$ is convex. So, we have  $\widetilde{C_\varphi}(re^{i\theta})=\frac{(1-r^2)^\gamma}{(1-r^2\beta)^\gamma}$, which is a function independent of $\theta$. So, if Ber$(C_\varphi)$ is convex, it must be either a point or a line segment. Therefore it gives that Ber$(C_\varphi)$ is a point if, and only if, $\beta=1$, so let us assume that Ber$(C_\varphi)$ is a line segment. Note that $\widetilde{C_\varphi}(0)=1$ and $\lim\limits_{r\rightarrow 1^-}\widetilde{C_\varphi}(re^{i\theta})=0$. This shows that Ber$(C_\varphi)$ must be a line segment passing through the point $1$ and approaching to the origin. Consequently, we must have the imaginary part of $\beta$ is zero, which can happen if, and only if, the imaginary part of $\beta$ is zero. Since $\beta\in \overline{\mathbb{D}}$, we have $-1\leq \beta\leq 1$. Therefore, if Ber$(C_\varphi)$ is convex then $-1\leq \beta \leq 1$. 
\end{proof}

\begin{remark}
	\begin{enumerate}
	\item[(i)]		If we choose $\gamma=2$, $\beta\in \overline{\mathbb{D}}$ and $\varphi(w)=\beta w$, then the Berezin range of $C_\varphi$ acting on $\mathcal{L}_a^2(\mathbb{D})$ is convex if, and only if, $-1\leq \beta\leq 1$, which has been proved in \cite{Augustine_Garayev_Shankar_CAOT_2023}.
		\item[(ii)] If we choose $\gamma=2$, $\beta\in \overline{\mathbb{D}}$ and $\varphi(w)=\beta w$. Then the Berezin range of $C_\varphi$ acting on $\mathcal{H}^2(\mathbb{D})$ is convex if, and only if, $-1\leq \beta\leq 1$, which has been proved in \cite{Augustine_Garayev_Shankar_CAOT_2023}.
		\end{enumerate}

\end{remark}

\begin{corollary}\label{Cor_4.5}
	Let $\beta\in \mathbb{T}$ and $\varphi(w)=\beta w$. Then Berezin range of $C_\varphi$ acting on $\mathcal{H}_\gamma(\mathbb{D})$ is convex if and only if $ \beta= 1$ or $ \beta=-1$.
\end{corollary}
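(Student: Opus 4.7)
The plan is to derive this immediately from Theorem \ref{Thm_Convexity_1}, since $\mathbb{T}\subset\overline{\mathbb{D}}$. First I would observe that the hypothesis $\beta\in\mathbb{T}$ places $\beta$ in $\overline{\mathbb{D}}$, so Theorem \ref{Thm_Convexity_1} applies verbatim. By that theorem, convexity of $\mathrm{Ber}(C_\varphi)$ on $\mathcal{H}_\gamma(\mathbb{D})$ is equivalent to $\beta\in[-1,1]$. The set-theoretic intersection $[-1,1]\cap\mathbb{T}=\{-1,1\}$ then forces $\beta=1$ or $\beta=-1$, which gives the forward implication.

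For the converse, I would verify explicitly that both admissible values produce a convex Berezin range, which amounts to reusing the computation inside the proof of Theorem \ref{Thm_Convexity_1}. Writing $w=re^{i\theta}$ with $0\le r<1$, one has
\begin{equation*}
\widetilde{C_\varphi}(w)=\frac{(1-r^2)^\gamma}{(1-r^2\beta)^\gamma}.
\end{equation*}
When $\beta=1$ this equals $1$ identically, so $\mathrm{Ber}(C_\varphi)=\{1\}$, which is trivially convex. When $\beta=-1$ the expression reduces to $(1-r^2)^\gamma/(1+r^2)^\gamma$, a continuous function of $r\in[0,1)$ that takes the value $1$ at $r=0$ and tends to $0$ as $r\to 1^-$; by the intermediate value theorem $\mathrm{Ber}(C_\varphi)=(0,1]$, again convex.

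There is essentially no obstacle here, as the corollary is a pure set-intersection consequence of the preceding theorem together with a two-line sanity check of the two boundary cases; the only thing to be careful about is to note that Theorem \ref{Thm_Convexity_1} was stated for $\beta\in\overline{\mathbb{D}}$ and therefore covers the unimodular case without modification.
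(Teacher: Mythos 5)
Your proof is correct and follows exactly the route the paper intends: the corollary is stated as an immediate consequence of Theorem \ref{Thm_Convexity_1}, obtained by intersecting the convexity condition $\beta\in[-1,1]$ with $\mathbb{T}$, and your explicit verification of the cases $\beta=\pm 1$ simply reuses the computation already inside that theorem's proof.
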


As a consequence of this Corollary \ref{Cor_4.5}, for Hardy and Bergman case, one can see the works of \cite[Theorem 4.1]{CowenFelder} and \cite[Corollary 5.2]{Augustine_Garayev_Shankar_CAOT_2023}.

Next, we move to investigate the convexity of the Berezin range of composition operator with another symbol.

\subsubsection{\bf Blaschke factor symbol}
Let the automorphism of the unit disk (known as the {\it Blaschke factor}) given by
\begin{equation}\label{eq:blaschke_factor}
    \varphi_\alpha (z)=\dfrac{z-\alpha}{1-\overline{\alpha}z}
\end{equation}
where $\alpha \in \mathbb{D}$. 
The composition operator $C_{\varphi_\alpha}$  acting on $\mathcal{H}_\gamma(\mathbb{D}),$ where $\gamma\in \mathbb{N}$ is $C_{\varphi_\alpha}f=f \circ\varphi_\alpha$.
The Berezin transform of the composition operator $C_{\varphi_\alpha}$ at a point $w\in \mathbb{D}$ is given by

\begin{equation}\label{eq:berezin_transform}
    \begin{aligned}[b]
        \widetilde{C}_{\varphi_\alpha}(w)&=\langle C_{\varphi_\alpha}\widehat{k}_w,\widehat{k}_w\rangle \\
        &=\dfrac{1}{\|k_w\|^2}\langle C_{\varphi_\alpha}k_w,k_w\rangle \\
        &=\left(1-|w|^2\right)^\g \left(C_{\varphi_\alpha}k_w\right)(w)\\
        &=\left(1-|w|^2\right)^\gamma k_w\left(\varphi_\alpha(w)\right)\\
        &=\frac{\left(1-|w|^2\right)^\gamma}{\left(1-\overline{w}\varphi_\alpha(w)\right)^\gamma}\\
        &=\left(\frac{1-|w|^2}{1-\overline{w}\left(\frac{w-\alpha}{1-\overline{\alpha}w}\right)}\right)^\gamma\\
        &=\left(\frac{\left(1-|w|^2\right)\left(1-\overline{\alpha}w\right)}{1-|w|^2+\alpha\overline{w}-\overline{\alpha}w}\right)^\gamma.
    \end{aligned}
\end{equation}
The Berezin range of $C_{\varphi_\alpha}$, Ber$(C_\varphi)=\left\{\widetilde{C}_{\varphi_\alpha}(w): w\in \mathbb{D}\right\}$ is geometrically more interesting than the elliptic case, not always convex(!).
Finally, we aim to characterize the convexity of the Berezin range in this case. In the following, we try to find the values of $\alpha$ for which Ber$(C_{\varphi_\alpha})$ is convex.\\

We can utilize this information to derive key insights into the geometry of Ber$(C_{\varphi_\alpha})$.
\begin{theorem}\label{th:symmetric}
    Let for $\alpha\in \mathbb{D},\;\varphi_\alpha$ be the Blaschke factor given by \eqref{eq:blaschke_factor}. Then the Berezin range of the composition operator $C_{\varphi_\alpha}$ acting on $\mathcal{H}_\gamma(\mathbb{D})$ is closed under complex conjugation and hence is symmetric about the real axis.
\end{theorem}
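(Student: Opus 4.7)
My plan is to exhibit, for each $w \in \mathbb{D}$, an explicit point $w' \in \mathbb{D}$ for which $\widetilde{C}_{\varphi_\alpha}(w') = \overline{\widetilde{C}_{\varphi_\alpha}(w)}$. Since the Berezin range of $C_{\varphi_\alpha}$ is the image of the Berezin transform as $w$ ranges over $\mathbb{D}$, producing such an involution $w \mapsto w'$ of $\mathbb{D}$ immediately yields that $\mathrm{Ber}(C_{\varphi_\alpha})$ is invariant under complex conjugation. Because $\gamma \in \mathbb{N}$, taking the $\gamma$-th power commutes with conjugation, so it suffices to work with the rational expression inside the $\gamma$-th power in equation \eqref{eq:berezin_transform}.

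The first step is to compute $\overline{\widetilde{C}_{\varphi_\alpha}(w)}$ directly from \eqref{eq:berezin_transform}: conjugating replaces $\overline{\alpha} w$ by $\alpha \overline{w}$ and $\alpha \overline{w}$ by $\overline{\alpha} w$ in both numerator and denominator. Comparing this with the formula for $\widetilde{C}_{\varphi_\alpha}(w')$, I want to choose $w'$ so that $\overline{\alpha} w' = \alpha \overline{w}$ and $\alpha \overline{w'} = \overline{\alpha} w$, while keeping $|w'| = |w|$. For $\alpha \neq 0$ the natural candidate is
\[
w' = \frac{\alpha}{\overline{\alpha}}\,\overline{w},
\]
which is a rotation of $\overline{w}$ by the argument of $\alpha^2$ and therefore an involution of $\mathbb{D}$ with $|w'| = |w| < 1$.

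The second step is a direct verification: substitute $w' = (\alpha/\overline{\alpha})\overline{w}$ into \eqref{eq:berezin_transform} and check that the three quantities $|w'|^2$, $\overline{\alpha} w'$, $\alpha \overline{w'}$ come out to $|w|^2$, $\alpha \overline{w}$, $\overline{\alpha} w$ respectively. This makes both the numerator and denominator of $\widetilde{C}_{\varphi_\alpha}(w')$ agree with those of $\overline{\widetilde{C}_{\varphi_\alpha}(w)}$, and then raising to the $\gamma$-th power preserves the equality. The degenerate case $\alpha = 0$ reduces $\varphi_\alpha$ to the identity, for which $\widetilde{C}_{\varphi_\alpha} \equiv 1$ and symmetry is trivial.

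I do not expect any substantive obstacle here; the only nontrivial point is guessing the right involution. Writing $\alpha = |\alpha| e^{i\phi}$ and $w = re^{i\theta}$ makes this transparent, since the transform depends on $\theta$ only through $\theta - \phi$ and the involution $\theta - \phi \mapsto -(\theta - \phi)$ implements conjugation on the transform — exactly the map $w \mapsto (\alpha/\overline{\alpha})\overline{w}$ in coordinate-free form.
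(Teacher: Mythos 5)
Your proposal is correct and is essentially the paper's own argument: the paper's auxiliary point $\lambda=\frac{r^2}{\rho^2}\cdot\frac{\alpha^2}{w}$ simplifies to exactly your $w'=\frac{\alpha}{\overline{\alpha}}\,\overline{w}$, and the verification proceeds by the same identities $|w'|=|w|$, $\overline{\alpha}w'=\alpha\overline{w}$, $\alpha\overline{w'}=\overline{\alpha}w$. The only cosmetic difference is that you treat the degenerate case $\alpha=0$ separately while the paper treats $w=0$ separately; both are immediate.
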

\begin{proof}
    Let $|\alpha|=\rho,\;|w|=r,$ where $w\in \mathbb{D}$. Assume that $w\neq 0,$ set $\lambda=\frac{r^2}{\rho^2}.\frac{\alpha^2}{w}$. We claim that $\overline{\widetilde{C}_{\varphi_\alpha}(w)}=\widetilde{C}_{\varphi_\alpha}(\lambda)$. By \eqref{eq:berezin_transform}, we have
    \begin{equation*}
        \widetilde{C}_{\varphi_\alpha}(\lambda)=\left(\frac{\left(1-|\lambda|^2\right)\left(1-\overline{\alpha}\lambda\right)}{1-|\lambda|^2+\alpha\overline{\lambda}-\overline{\alpha}\lambda}\right)^\gamma.
    \end{equation*}
Notice that $|\lambda|=\frac{r^2}{\rho^2}\times\frac{|\alpha|^2}{|w|}=r$, $\overline{\alpha}\lambda=\frac{|\alpha|^2}{\alpha}\times\frac{r^2}{\rho^2}\times\frac{\alpha^2}{w}=r^2\times\frac{\alpha}{w}$, and $\alpha\overline{\lambda}-\overline{\alpha}\lambda=-2i \Im\left\{\overline{\alpha}\lambda\right\}=-2i \Im\left\{\frac{r^2\alpha}{w}\right\}$.
   Therefore,
    \begin{equation*}
        \widetilde{C}_{\varphi_\alpha}(\lambda)=\left(\frac{(1-r^2)\left(1-\frac{r^2\alpha}{w}\right)}{1-r^2-2i \Im\left\{\frac{r^2\alpha}{w}\right\}}\right)^\gamma,
    \end{equation*}
and hence
    \begin{equation}\label{Eq_conju}      \overline{\widetilde{C}_{\varphi_\alpha}(\lambda)}=\left(\frac{(1-r^2)\left(1-\frac{r^2\overline{\alpha}}{\overline{w}}\right)}{1-r^2+2i \Im\left\{\frac{r^2\alpha}{w}\right\}}\right)^\g.
    \end{equation}
    Since, $1-\frac{r^2\overline{\alpha}}{\overline{w}}=1-\frac{|w|^2\overline{\alpha}}{\overline{w}}=1-w\overline{\alpha}$ and $2i \Im\left\{\frac{r^2\alpha}{w}\right\}=2i \Im\left\{\frac{|w|^2\alpha}{w}\right\}=2i \Im\left\{\alpha\overline{w}\right\}=\alpha\overline{w}-\overline{\alpha}w,$ and \eqref{Eq_conju} reduces to
    \begin{equation*}
        \overline{\widetilde{C}_{\varphi_\alpha}(\lambda)}=\left(\frac{(1-r^2)\left(1-\overline{\alpha}w\right)}{1-r^2+\alpha\overline{w}-\overline{\alpha}w}\right)^\gamma=\widetilde{C}_{\varphi_\alpha}(w),
    \end{equation*}
    which gives that 
    \begin{equation*}
        \overline{\widetilde{C}_{\varphi_\alpha}(w)}=\widetilde{C}_{\varphi_\alpha}(\lambda).
    \end{equation*}
    Thus, our claim is proved. As $|\lambda|=\frac{r^2}{\rho^2}\times\frac{|\alpha|^2}{|w|}=|w|<1,\;\lambda\in \mathbb{D},$ and hence $\widetilde{C}_{\varphi_\alpha}(\lambda)\in \mbox{Ber}(C_{\varphi_\alpha})$, i.e., $\overline{\widetilde{C}_{\varphi_\alpha}(w)}$. If, however, $w=0,$ then $\widetilde{C}_{\varphi_\alpha}(w)=\widetilde{C}_{\varphi_\a}(0)=1$, a real number, implying that $\overline{\widetilde{C}_{\varphi_\alpha}(0)}=\widetilde{C}_{\varphi_\alpha}(0)$.
    This completes the proof. 
\end{proof}
We highlight a corollary of this result that plays a key role in establishing the characterization of convexity.
\begin{corollary}\label{cor:real_in_Berezin_range}
        If the Berezin range of $C_{\varphi_\alpha}$ acting on $\mathcal{H}_\gamma(\mathbb{D})$ be convex, then $\Re\left\{\widetilde{C}_{\varphi_\alpha}(z)\right\}\in \textnormal{Ber}(C_{\varphi_\alpha})$ for every $z\in \mathbb{D}$.
    \end{corollary}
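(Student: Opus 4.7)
The plan is to combine the symmetry result of Theorem \ref{th:symmetric} with the convexity hypothesis, exploiting the elementary identity $\Re\{\zeta\} = \tfrac{1}{2}(\zeta + \overline{\zeta})$.

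First I would fix an arbitrary $z \in \mathbb{D}$ and set $\zeta = \widetilde{C}_{\varphi_\alpha}(z)$, which lies in $\mathrm{Ber}(C_{\varphi_\alpha})$ by definition. Next, I would invoke Theorem \ref{th:symmetric}, which asserts that $\mathrm{Ber}(C_{\varphi_\alpha})$ is closed under complex conjugation; consequently $\overline{\zeta} = \overline{\widetilde{C}_{\varphi_\alpha}(z)}$ also belongs to $\mathrm{Ber}(C_{\varphi_\alpha})$. (Concretely, the theorem exhibits the explicit point $\lambda = \tfrac{|z|^2}{|\alpha|^2}\cdot\tfrac{\alpha^2}{z} \in \mathbb{D}$ with $\widetilde{C}_{\varphi_\alpha}(\lambda) = \overline{\zeta}$, handling $z = 0$ separately where $\zeta = 1 \in \mathbb{R}$ is self-conjugate.)

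Then I would use the convexity assumption on $\mathrm{Ber}(C_{\varphi_\alpha})$: since both $\zeta$ and $\overline{\zeta}$ lie in this convex set, so does the midpoint
\begin{equation*}
\tfrac{1}{2}\zeta + \tfrac{1}{2}\overline{\zeta} \;=\; \Re\{\zeta\} \;=\; \Re\bigl\{\widetilde{C}_{\varphi_\alpha}(z)\bigr\}.
\end{equation*}
Since $z \in \mathbb{D}$ was arbitrary, the conclusion follows.

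There is no real obstacle here; the corollary is a direct consequence of the conjugation-symmetry established in Theorem \ref{th:symmetric} together with the definition of convexity. The only mild care needed is in observing that the point $\lambda$ produced by Theorem \ref{th:symmetric} actually lies in $\mathbb{D}$ (which the proof of that theorem already confirms via $|\lambda| = |z| < 1$), so that $\overline{\zeta}$ is genuinely a value of the Berezin transform rather than merely a limit point.
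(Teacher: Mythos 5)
Your argument is correct and coincides with the paper's own proof: both take $\widetilde{C}_{\varphi_\alpha}(z)$ and its conjugate (supplied by Theorem \ref{th:symmetric}) and apply convexity to their midpoint to obtain $\Re\{\widetilde{C}_{\varphi_\alpha}(z)\}$. The extra remark about the witness $\lambda$ lying in $\mathbb{D}$ is a sensible but already-covered detail from the proof of Theorem \ref{th:symmetric}.
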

    \begin{proof}
    Since $z\in \mathbb{D},\;\widetilde{C}_{\varphi_\alpha}(z)\in \textnormal{Ber}(C_{\varphi_\alpha})$. Then by Theorem \ref{th:symmetric}, $\overline{\widetilde{C}_{\varphi_\alpha}(z)}\in \mbox{Ber}(C_{\varphi_\alpha})$. Suppose $\mbox{Ber}(C_{\varphi_\alpha})$ be convex, then we must have
    \begin{equation*}
        \frac{1}{2}\widetilde{C}_{\varphi_\alpha}(z)+\frac{1}{2}\overline{\widetilde{C}_{\varphi_\alpha}(z)}=\Re\left\{\widetilde{C}_{\varphi_\alpha}(z)\right\}\in\mbox{Ber}(C_{\varphi_\alpha}).
    \end{equation*}
     \end{proof}
    
Now, we are in a position to provide a characterization of the convexity of the composition operator acting on $\mathcal{H}_\gamma(\mathbb{D})$. Here, we highlight that we have identified a mistake in a recently published article, specifically in \cite[Lemma 5.4 and Theorem 5.7]{Augustine_Garayev_Shankar_CAOT_2023}. Moreover, the factor $$K_{\alpha,z}=\dfrac{1-|z|^2}{(1-|z|^2+2i\Im\left\{\alpha\overline{z}\right\})}$$ should be $$K_{\alpha,z}=\dfrac{1-|z|^2}{\left(1-|z|^2\right)^2+4\left(\Im\left\{\alpha\overline{z}\right\}\right)^2}$$ in \cite[Lemma 5.4]{Augustine_Garayev_Shankar_CAOT_2023} and $\widetilde{C}_{\varphi_\alpha}(w)=\left(1-r|\alpha|^2\right)^4$ should be $\widetilde{C}_{\varphi_\alpha}(w)=\left(1-r|\alpha|^2\right)^2$ in \cite[Theorem 5.7]{Augustine_Garayev_Shankar_CAOT_2023}.
The authors of \cite{CowenFelder, Augustine_Garayev_Shankar_CAOT_2023} characterized explicitly both real and imaginary parts of the Berezin transform of composition operators on Hardy and Bergman space.  
The main obstacle to obtaining both real and imaginary parts of the Berezin transform of composition operators in reproducing kernel Hilbert space $\mathcal{H}_\gamma(\mathbb{D})$ is the increased complexity of the reproducing kernel, $ K_w(z)=(1-\overline{w}z)^{-\gamma}.$ We have adopted a slightly different method to characterize the convexity of the composition operator in reproducing kernel Hilbert space $\mathcal{H}_\gamma(\mathbb{D})$.

\begin{theorem}\label{th:value_of_alpha}
         The Berezin range of $C_{\varphi_\alpha}$ acting on $\mathcal{H}_\gamma(\mathbb{D})$, $\gamma\in \mathbb{N}$ is convex if, and only if, $\alpha=0$.
     \end{theorem}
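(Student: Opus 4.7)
The sufficiency is immediate: if $\alpha=0$ then $\varphi_0=\mathrm{id}_{\mathbb{D}}$, hence $C_{\varphi_0}=I$ and $\widetilde{C}_{\varphi_0}\equiv 1$, so $\mathrm{Ber}(C_{\varphi_0})=\{1\}$ is trivially convex. For the necessity, I argue by contradiction: assume $\alpha\neq 0$, set $\rho=|\alpha|\in(0,1)$, and suppose $\mathrm{Ber}(C_{\varphi_\alpha})$ is convex. Writing $w=re^{i\theta}$, $\alpha=\rho e^{i\phi}$, and $\delta=\phi-\theta$, formula \eqref{eq:berezin_transform} factors as $\widetilde{C}_{\varphi_\alpha}(w)=u(r,\delta)^\gamma$, where
\[
u(r,\delta)=\frac{(1-r^2)\bigl(1-\rho r e^{-i\delta}\bigr)}{(1-r^2)+2i\rho r\sin\delta}.
\]
Rationalizing the denominator shows $\Re u(r,\delta)>0$ everywhere on $\mathbb{D}$, so $\arg u\in(-\pi/2,\pi/2)$. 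Consequently $\widetilde{C}_{\varphi_\alpha}(w)\in\mathbb{R}$ forces $\arg u\in\{k\pi/\gamma:|k|<\gamma/2\}$.

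The decisive step is to pin down $\mathrm{Ber}(C_{\varphi_\alpha})\cap\mathbb{R}$. The principal branch $k=0$ is equivalent to $\sin\delta=0$ and yields, along the line through $0$ and $\alpha$, the values $(1\mp\rho r)^\gamma$ with $r\in[0,1)$, sweeping out the open interval $\bigl((1-\rho)^\gamma,(1+\rho)^\gamma\bigr)$, bounded below by $(1-\rho)^\gamma>0$. For the auxiliary branches $0<|k|<\gamma/2$, an analysis of the boundary limits
\[
\lim_{r\to 1^-}\arg u(r,\delta)\in\bigl(-\tfrac{\pi}{2},\,-\tfrac{\pi}{2}+\arcsin\rho\bigr]\cup\bigl[\tfrac{\pi}{2}-\arcsin\rho,\,\tfrac{\pi}{2}\bigr)
\]
shows that each level set $\{\arg u=k\pi/\gamma\}\subset\mathbb{D}$ is either empty or contained in a compact subset of $\mathbb{D}$, so $|u|^\gamma$ admits a uniform positive lower bound along it. Combining the principal and auxiliary contributions produces a constant $c=c(\rho,\gamma)>0$ such that every real element of $\mathrm{Ber}(C_{\varphi_\alpha})$ has modulus at least $c$.

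To finish, I invoke the conjugation symmetry. Along the path $w=ire^{i\phi}$ (so $\delta=-\pi/2$), a direct computation gives $\widetilde{C}_{\varphi_\alpha}(ire^{i\phi})\to 0$ as $r\to 1^-$, so $\Re\widetilde{C}_{\varphi_\alpha}(ire^{i\phi})\to 0$ as well. Since $\mathrm{Ber}(C_{\varphi_\alpha})$ is convex by assumption and conjugation-symmetric by Theorem~\ref{th:symmetric}, Corollary~\ref{cor:real_in_Berezin_range} forces each such real part to belong to $\mathrm{Ber}(C_{\varphi_\alpha})$; thus $\mathrm{Ber}(C_{\varphi_\alpha})$ contains real values of modulus strictly less than $c$, contradicting the bound established above. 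The main obstacle is the auxiliary-branch estimate when $\gamma\ge 3$: the level sets $\{\arg u=k\pi/\gamma\}$ are defined only implicitly and, for $\rho$ close to $1$, might a priori reach the unit circle where $|u|\to 0$. If the global boundary-angle argument proves unwieldy, an alternative is to bypass the branch analysis by selecting a conjugate pair $\widetilde{C}_{\varphi_\alpha}(w),\,\overline{\widetilde{C}_{\varphi_\alpha}(w)}=\widetilde{C}_{\varphi_\alpha}(\lambda)$ as in Theorem~\ref{th:symmetric} and verifying by direct calculation that their midpoint lies outside the image of $\widetilde{C}_{\varphi_\alpha}$, a strategy that mirrors the corrected Hardy/Bergman proofs in \cite{CowenFelder,Augustine_Garayev_Shankar_CAOT_2023}.
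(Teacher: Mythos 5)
Your overall strategy is the one the paper uses: sufficiency is immediate, and for necessity you combine Corollary~\ref{cor:real_in_Berezin_range} with a description of the real values of $\widetilde{C}_{\varphi_\alpha}$ and the fact that $\Re\{\widetilde{C}_{\varphi_\alpha}\}$ tends to $0$ along a path to $\partial\mathbb{D}$. You are in fact \emph{more} careful than the paper at one point: since $\widetilde{C}_{\varphi_\alpha}(w)=u(w)^\gamma$ with $\Re u>0$, the condition $\Im\{\widetilde{C}_{\varphi_\alpha}(w)\}=0$ only forces $\arg u\in\{k\pi/\gamma:\ |k|<\gamma/2\}$, whereas the paper's computation silently replaces $(Re^{i\Theta})^\gamma$ by $R^\gamma e^{i\Theta}$ and so only ever treats the principal branch $k=0$. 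Identifying the auxiliary branches is a genuine contribution.

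The gap is that your ``decisive step'' — a uniform constant $c=c(\rho,\gamma)>0$ bounding from below the modulus of every real element of $\mathrm{Ber}(C_{\varphi_\alpha})$ — is asserted, not proved, and the claim it rests on (that each level set $\{\arg u=k\pi/\gamma\}$ with $k\neq0$ is empty or compactly contained in $\mathbb{D}$) fails in general. Writing $u=\dfrac{(1-|w|^2)(1-\overline{\alpha}w)}{1-|w|^2+2i\Im\{\alpha\overline{w}\}}$, the boundary limits of $\arg u$ at points $w_0\in\partial\mathbb{D}$ with $\Im\{\overline{\alpha}w_0\}\neq0$ are $\arg(1-\overline{\alpha}w_0)\pm\pi/2$ and sweep the full intervals $[\pi/2-\arcsin\rho,\pi/2)$ and $(-\pi/2,-\pi/2+\arcsin\rho]$. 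Hence as soon as $\arcsin\rho\ge\pi/(2\gamma)$ (e.g.\ $\gamma=3$, $\rho\ge\tfrac12$), the extreme branch value $k\pi/\gamma$ with $|k|=\lfloor(\gamma-1)/2\rfloor$ is itself a boundary limit of $\arg u$, attained at a point where the denominator of $u$ stays bounded away from $0$ while $1-|w|^2\to0$, so $|u|\to0$ there. Nothing in the proposal excludes the level set accumulating at such a boundary point, in which case $\mathrm{Ber}(C_{\varphi_\alpha})$ contains real values (of the form $\pm|u|^\gamma$) arbitrarily close to $0$ and the constant $c$ does not exist; the concluding contradiction then collapses for $\gamma\ge3$. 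You flag exactly this as the ``main obstacle'' and offer a fallback (checking directly that the midpoint of a conjugate pair is omitted by $\widetilde{C}_{\varphi_\alpha}$), but that fallback is not carried out, so the necessity direction is not established for $\gamma\ge3$. (For $\gamma=1,2$ only the branch $k=0$ can occur, the real values are exactly $\bigl((1-\rho)^\gamma,(1+\rho)^\gamma\bigr)$, and your argument is complete; note also that the published proof shares the $\gamma\ge3$ deficiency, since it never leaves the principal branch.)
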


\begin{proof}
    When $\alpha=0,\;\varphi_\alpha(z)=z.$ In this case, for $w\in \mathbb{D}$, it follows from \eqref{eq:berezin_transform} that 
         \begin{equation*}
             \widetilde{C}_{\varphi_\alpha}(w)=\left(\frac{1-|w|^2}{1-|w|^2}\right)^\gamma=1.
         \end{equation*}
         So, $\mbox{Ber}(C_{\varphi_\alpha})=\{1\}$, which being a singleton set is convex. Conversely, assume that $\mbox{Ber}(C_{\varphi_\alpha})$ be convex. Then from Corollary \ref{cor:real_in_Berezin_range}, $\Re\left\{\widetilde{C}_{\varphi_\alpha}(z)\right\} \in \mbox{Ber} (C_{\varphi_\alpha})$. This means that for every $z\in \mathbb{D}$, there exists $w\in \mathbb{D}$ such that
         \begin{equation*}
             \widetilde{C}_{\varphi_\alpha}(w)=\Re\left\{\widetilde{C}_{\varphi_\alpha}(z)\right\},
         \end{equation*}
         and hence

\begin{equation}\label{eq:imaginary_vanishes}
             \Im\left\{\widetilde{C}_{\varphi_\alpha}(w)\right\}=0.
         \end{equation}

Now,
         \begin{equation*}
            \begin{split}
                \widetilde{C}_{\varphi_\alpha}(w)&=\left(\frac{\left(1-|w|^2\right)\left(1-\overline{\alpha}w\right)}{1-|w|^2+\alpha\overline{w}-\overline{\alpha}w}\right)^\gamma\\
                &=\left(\frac{\left(1-|w|^2\right)\left(1-\overline{\alpha}w\right)}{1-|w|^2+2i \Im\left\{\alpha\overline{w}\right\}}\right)^\gamma \\
                &=\left(\frac{\left(1-|w|^2\right)\left(1-\overline{\alpha}w\right)\left(1-|w|^2-2i \Im\left\{\alpha\overline{w}\right\}\right)}{\left(1-|w|^2\right)^2+4\left(\Im\left\{\alpha\overline{w}\right\}\right)^2}\right)^\gamma \\
                &=K_{\alpha,w}^\gamma\left(1-|w|^2-2i\Im\left\{\alpha\overline{w}\right\}-\overline{\alpha}w\left(1-|w|^2\right)+2i \Im\left\{\alpha\overline{w}\right\}\overline{a}w\right)^\gamma\\ 
                &=K_{\alpha,w}^\gamma\left(1-|w|^2+2i\Im\left\{\overline{\alpha}w\right\}-\overline{\alpha}w\left(1-|w|^2\right)+2i \Im\left\{\alpha\overline{w}\right\}\overline{a}w\right)^\gamma,
        \end{split}
         \end{equation*}
       where  $K_{\alpha,w}=\dfrac{1-|w|^2}{\left(1-|w|^2\right)^2+4\left(\Im\left\{\alpha\overline{w}\right\}\right)^2}$.
                  Replacing $\overline{\alpha}w$ by $\left(\Re\left\{\overline{\alpha}w\right\}+i \Im\left\{\overline{\alpha}w\right\}\right)$ above, becomes
          \begin{align*}
  \widetilde{C}_{\varphi_\alpha}(w)=K_{\alpha,w}^\gamma&\left(1-|w|^2+2i \Im\left\{\overline{\alpha}w\right\}-\left(\Re\left\{\overline{\alpha}w\right\}+i \Im\left\{\overline{\alpha}w\right\}\right)\left(1-|w|^2\right)\right.\\
  &\left. \quad-2i \Im\left\{\overline{\alpha}w\right\}\left(\Re\left\{\overline{\alpha}w\right\}+i \Im\left\{\overline{\alpha}w\right\}\right)\right)^\gamma\\
  =K_{\alpha,w}^\gamma &\left(1-|w|^2-\left(1-|w|^2\right)\Re\left\{\overline{\alpha}w\right\}+2\left(\Im\left\{\overline{\alpha}w\right\}\right)^2\right.\\
  &\left. \quad+i\left(2\Im\left\{\overline{\alpha}w\right\}-\left(1-|w|^2\right)\Im\left\{\overline{\alpha}w\right\}-2\Im\left\{\overline{\alpha}w\right\}\Re\left\{\overline{\alpha}w\right\}\right)\right)^\gamma\\
  =K_{\alpha,w}^\gamma&\left(\left(1-|w|^2\right)\left(1-\Re\left\{\overline{\alpha}w\right\}\right)+2\left(\Im\left\{\overline{\alpha}w\right\}\right)^2+ i \Im\left\{\overline{\alpha}w\right\}\left(1+|w|^2-2 \Re\left\{\overline{\alpha}w\right\}\right)\right)^\gamma.
           \end{align*}

        We put
        \begin{equation*}
        \begin{split}
            &R\cos{\Theta}=\left(1-|w|^2\right)\left(1-\Re\left\{\overline{\alpha}w\right\}\right)+2\left(\Im\left\{\overline{\alpha}w\right\}\right)^2\\
            &R\sin{\Theta}=\Im\left\{\overline{\alpha}w\right\}\left(1+|w|^2-2 \Re\left\{\overline{\alpha}w\right\}\right).
            \end{split}
        \end{equation*}
        Notice that $R=0$ when $\widetilde{C}_{\varphi_\alpha}(w)=0,$ i.e., when $K_{\alpha,w}=0$, i.e., when $|w|^2=1$; which cannot happen, since $w\in \mathbb{D}$. Switching to polar coordinates and recalling the definition of complex exponents $a^z=\exp\left({z\mbox{Log}a}\right)$, we have
        \begin{equation*}
            \begin{split}
                \widetilde{C}_{\varphi_\alpha}(w)&=K_{\alpha,w}^\gamma\left(R(\cos{\Theta}+i\sin{\Theta})\right)^\gamma\\
                &=K_{\alpha,w}^\gamma \exp{\left(\gamma\mbox{Log}\left(R(\cos{\Theta}+i\sin{\Theta})\right)\right)}\\
                &=K_{\alpha,w}^\gamma \exp{\left(\gamma(\mbox{log}R+i\Theta)\right)};\;\;\mbox{considering the principal value only}\\
                &=K_{\alpha,w}^\gamma e^{\gamma\mbox{Log}R}e^{i\Theta}\\
                &=K_{\alpha,w}^\gamma e^{\gamma\mbox{Log}R}(\cos{\Theta}+i\sin{\Theta}).
            \end{split}
        \end{equation*}
       By (\ref{eq:imaginary_vanishes}),
       \begin{equation}\label{Eq_0010}
          R\sin{\Theta}=0,\hspace{1 cm}\mbox{i.e.,}\hspace{1 cm}\Im\left\{\overline{\alpha}w\right\}\left(1+|w|^2-2 \Re\left\{\overline{\alpha}w\right\}\right)=0.
       \end{equation}
       Since for any $w,\alpha\in \mathbb{D},\;\left(1+|w|^2-2 \Re\left\{\overline{\alpha}w\right\}\right)>0,$ \eqref{Eq_0010} will be true iff $\Im\left\{\overline{\alpha}w\right\}=0.$ This says that $\alpha$ and $w$ lie on a straight line passing through the origin. So, we put $w=r\alpha$ for some $r\in \left(-\frac{1}{|\alpha|},\frac{1}{|\alpha|}\right),$ and then we find that
       \begin{equation*}
           \begin{split}
               \widetilde{C}_{\varphi_\alpha}(w)&=K_{\alpha,w}^\gamma\left((1-r^2|\alpha|^2)\left(1-\Re\left\{r|\alpha|^2\right\}\right)\right)^\gamma\\
               &=K_{\alpha,w}^\gamma\left((1-r^2|\alpha|^2)\left(1-r|\alpha|^2\right)\right)^\gamma\\
               &=\left(\dfrac{1-|w|^2}{\left(1-|w|^2\right)^2+4\left(\Im\left\{\alpha\overline{w}\right\}\right)^2}\right)^\gamma\left((1-r^2|\alpha|^2)\left(1-r|\alpha|^2\right)\right)^\gamma\\
               &=\left(\frac{1-r^2|\alpha|^2}{\left(1-r^2|\alpha|^2\right)^2}\right)^\gamma\left((1-r^2|\alpha|^2)\left(1-r|\alpha|^2\right)\right)^\gamma\\
               &=\left(1-r|\alpha|^2\right)^\gamma.
           \end{split}
       \end{equation*}
       Consequently, $\left\{\widetilde{C}_{\varphi_\alpha}(r\alpha):r\in \left(-\frac{1}{|\alpha|},\frac{1}{|\alpha|}\right)\right\}=\left((1-|\alpha|)^\gamma,(1+|\alpha|)^\gamma\right)$.\\ Now,
       \begin{equation*}
           \widetilde{C}_{\varphi_\alpha}(z)=\left(\dfrac{\left(1-|z|^2\right)\left(1-\overline{\alpha}z\right)}{1-|z|^2+\alpha\overline{z}-\overline{\alpha}z}\right)^\gamma,
       \end{equation*}
       and hence,
       \begin{equation*}
           \widetilde{C}_{\varphi_\alpha}(\rho e^{i\theta})=\left(\frac{\left(1-\rho^2\right)\left(1-\overline{\alpha}\rho e^{i\theta}\right)}{1-\rho^2+\alpha\rho e^{-i\theta}-\overline{\alpha}\rho e^{i\theta}}\right)^\gamma.
       \end{equation*}
       Letting $\rho \rightarrow -1,$ we have
       \begin{equation*}
          \lim_{\rho \rightarrow -1}{\widetilde{C}_{\varphi_\alpha}(\rho e^{i\theta})}=\begin{cases}
              1,\;\mbox{when}\;\alpha=0\\
              0,\;\mbox{when}\;\alpha\neq 0
          \end{cases}.
       \end{equation*}
     This shows that when $\alpha\neq0,$ given $\epsilon$ with $0<\epsilon<(1-|\alpha|)^\gamma$, there exists a point $z$ such that $\Re\left\{\widetilde{C}_{\varphi_\alpha}(z)\right\}<\epsilon.$ But if $\widetilde{C}_{\p_a}(w)=\Re\left\{\widetilde{C}_{\varphi_\alpha}(z)\right\},$ this is a contradiction because $$\widetilde{C}_{\p_a}(w)\in \left\{\widetilde{C}_{\varphi_\alpha}(r\alpha):r\in \left(-\frac{1}{|\alpha|},\frac{1}{|\alpha|}\right)\right\}=\left((1-|\alpha|)^\gamma,(1+|\alpha|)^\gamma\right).$$ Thus, $\mbox{Ber}(C_{\varphi_\alpha})$ cannot be convex unless $\alpha=0$.      
\end{proof}
It should be remarked here that the restriction of $\gamma \in \mathbb{N}$ is essential, as in general, for any $a, b \in \mathbb{C}$ and $\gamma>0$, $(ab)^\gamma\neq a^\gamma b^\gamma$, while the equality holds when $\gamma \in \mathbb{N}$, is used in the proof.\\

\subsubsection{\bf Geometric view of the Berezin range of composition operator.}

    In the following, with the aid of the computer, we plot some Berezin ranges of some composition operators induced by the Blaschke factor given by (\ref{eq:blaschke_factor}).       
       In Figure \ref{fig:comparison_a=0.5_g=1,2,3}, we see that the Berezin range of $C_{\phi_\alpha},\,Ber\left(C_{\phi_\alpha}\right)$ looks like a circular disc centered on $(1,0)$ with a hole in it in Hardy space while it looks like a cardioid in Bergman space. Also, for a fixed value of $\alpha$, the roughness of the outer boundary of the Berezin range of $\left(C_{\phi_\alpha}\right)$ increases with increasing $\gamma.$\\
           \par
        In Hardy space $(\g=1)$, if we vary $\alpha$ along the positive real axis, it can be seen that the size of the circular disc gradually gets bigger but the hole in it gets smaller when $\alpha$ approaches $1,$ and when $\alpha$ approaches the origin,  the disc gradually squeezes but the hole inside it expands. Finally, when $\alpha=0,$ this disc ultimately squeezes to its center. Similar happens if $\alpha$ varies along the negative real axis (see Figure \ref{fig:comparison_g=1_a=real}). Indeed, not only along the real axis, this happens if $\alpha$ varies along any straight line through the origin (see Figure \ref{fig:comparison_g=1_line_y=x}). Also, as $\alpha$ varies away from the origin, the smoothness of the boundary of the Berezin range gradually vanishes. \\  
    \par
    
    Likewise in the Bergman space $(\g=2).$ To be explicit, if we let $\alpha$ to vary along any straight line through the origin, away from the origin to the boundary of the unit disc $\D,$ the cardioid begins to expand with more rough boundary whereas the hole in it becomes smaller, and very tiny when $\alpha$ is near the boundary of $\D$ (see Figure \ref{fig:comparison_g=2_line_y=2x}).\\   
    \par
    
    In Figure \ref{fig:comparison_g=1_a=real} we see that in Hardy space, $Ber\left(C_{\phi_\alpha}\right)=Ber\left(C_{\phi_{-\alpha}}\right),$ when $\alpha$ is on the real axis. Indeed, this is true in any space and for any $\alpha \in \D,$ which we prove in the following theorem.
    \begin{theorem}\label{positive=negative}
    For any value of $\g$ and for any $\alpha \in \D,$
    \begin{equation}        Ber\left(C_{\phi_\alpha}\right)=Ber\left(C_{\phi_{-\alpha}}\right),
    \end{equation}
    in the space $\mathcal{H}_\g(\D).$
    \end{theorem}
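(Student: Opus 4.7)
The plan is to prove the equality of the two Berezin ranges by exhibiting a bijection of $\mathbb{D}$ onto itself (namely $w\mapsto -w$) under which the Berezin transform of $C_{\varphi_\alpha}$ becomes the Berezin transform of $C_{\varphi_{-\alpha}}$. Once this identity of functions is in place, the equality of ranges is immediate.

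First I would recall the closed-form expression for the Berezin transform already derived in \eqref{eq:berezin_transform}, namely
\begin{equation*}
\widetilde{C}_{\varphi_\alpha}(w)=\left(\dfrac{(1-|w|^2)(1-\overline{\alpha}w)}{1-|w|^2+\alpha\overline{w}-\overline{\alpha}w}\right)^{\gamma},\qquad w\in\mathbb{D}.
\end{equation*}
Replacing $\alpha$ by $-\alpha$ in this formula gives
\begin{equation*}
\widetilde{C}_{\varphi_{-\alpha}}(w)=\left(\dfrac{(1-|w|^2)(1+\overline{\alpha}w)}{1-|w|^2-\alpha\overline{w}+\overline{\alpha}w}\right)^{\gamma}.
\end{equation*}

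Next, I would substitute $w\mapsto -w$ directly into the first expression. Since $|-w|^2=|w|^2$ and $\overline{-w}=-\overline{w}$, a one-line computation yields
\begin{equation*}
\widetilde{C}_{\varphi_\alpha}(-w)=\left(\dfrac{(1-|w|^2)(1+\overline{\alpha}w)}{1-|w|^2-\alpha\overline{w}+\overline{\alpha}w}\right)^{\gamma}=\widetilde{C}_{\varphi_{-\alpha}}(w),
\end{equation*}
valid for every $w\in\mathbb{D}$ and every $\gamma\in\mathbb{N}$ (the assumption $\gamma\in\mathbb{N}$ ensures the multiplicativity of the $\gamma$-th power so no branch-cut issues arise, consistent with the remark following Theorem \ref{th:value_of_alpha}).

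Finally, because the map $w\mapsto -w$ is a bijection of $\mathbb{D}$ onto itself, the identity above gives
\begin{equation*}
\mathrm{Ber}(C_{\varphi_\alpha})=\{\widetilde{C}_{\varphi_\alpha}(-w):w\in\mathbb{D}\}=\{\widetilde{C}_{\varphi_{-\alpha}}(w):w\in\mathbb{D}\}=\mathrm{Ber}(C_{\varphi_{-\alpha}}),
\end{equation*}
which is the desired equality. There is really no serious obstacle here; the only thing that requires a moment of care is keeping track of the signs in numerator and denominator so that the substitution $w\mapsto -w$ lines up correctly with the $\alpha\mapsto -\alpha$ replacement, and the use of $\gamma\in\mathbb{N}$ so that raising to the $\gamma$-th power commutes with multiplication.
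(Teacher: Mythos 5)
Your proposal is correct and follows essentially the same route as the paper: both establish the pointwise identity $\widetilde{C}_{\varphi_\alpha}(-w)=\widetilde{C}_{\varphi_{-\alpha}}(w)$ from the closed-form Berezin transform and then use the bijection $w\mapsto -w$ of $\mathbb{D}$ to conclude equality of the ranges. The sign bookkeeping and the remark on $\gamma\in\mathbb{N}$ are accurate, so no changes are needed.
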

    \begin{proof}
        First, we prove that $Ber\left(C_{\phi_\alpha}\right)\subset Ber\left(C_{\phi_{-\alpha}}\right),$ {\it i.e.}, every member of $Ber\left(C_{\phi_\alpha}\right)$ is equal to some element in $Ber\left(C_{\phi_{-\alpha}}\right).$ Let $\omega \in \D$ be arbitrary. Our intention is to find some $\omega '\in \D$ such that $\widetilde{C}_{\phi_\alpha}(\omega)=\widetilde{C}_{\phi_{-\alpha}}(\omega').$ Now,
        \begin{equation*}
            \begin{split}
                \widetilde{C}_{\phi_\alpha}(\omega)&=\left[\dfrac{\left(1-|\o|^2\right)\left(1-\overline{\a}\o\right)}{1-|\o|^2+\a\overline{\o}-\overline{\a}\o}\right]^\g\;\;\;[\mbox{c.f.}\;(\ref{eq:berezin_transform})]\\
                &=\left[\dfrac{\left(1-|-\o|^2\right)\left(1-\overline{(-\a)}(-\o)\right)}{1-|-\o|^2+(-\a)\overline{(-\o)}-\overline{(-\a)}(-\o)}\right]^\g \\
                &=\widetilde{C}_{\phi_{-\alpha}}(-\omega).
            \end{split}
        \end{equation*}
        Since $-\omega \in \D,\;Ber\left(C_{\phi_\alpha}\right)\subset Ber\left(C_{\phi_{-\alpha}}\right).$ Likewise, $Ber\left(C_{\phi_{-\alpha}}\right)\subset Ber\left(C_{\phi_{\alpha}}\right).$ This completes the proof.
    \end{proof}




\begin{figure}[h!]
    \centering
        \begin{subfigure}[b]{0.3\textwidth}
        \includegraphics[width=\textwidth]{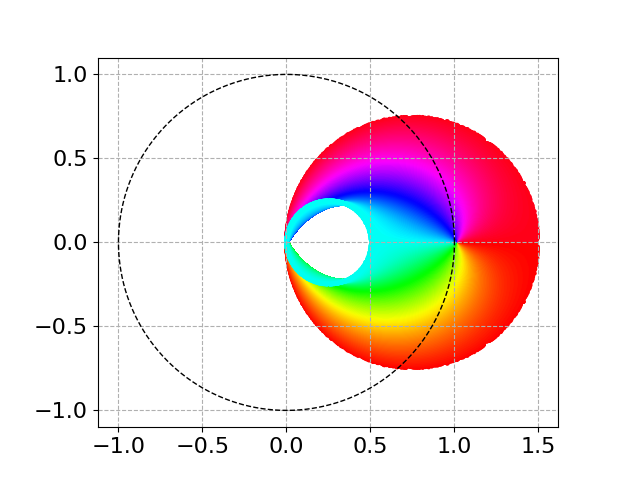}
        \caption{$\alpha=0.5,\;\g=1$}
        \label{fig:bf_g=1_a=0.5}
        \end{subfigure}
    \hfill
        \begin{subfigure}[b]{0.3\textwidth}
        \centering
        \includegraphics[width=\textwidth]{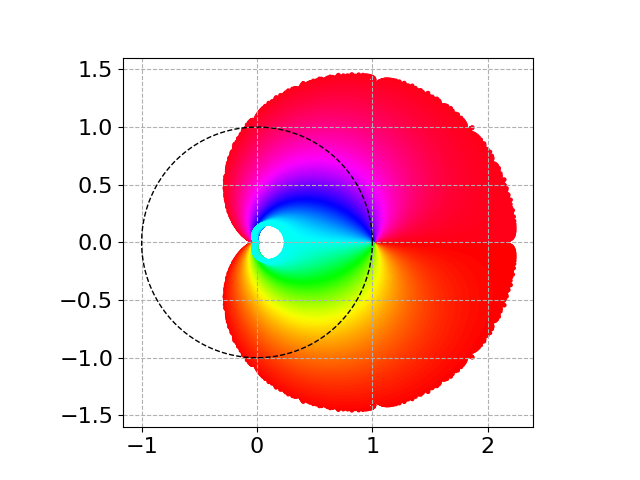}
        \caption{$\alpha=0.5,\;\g=2$}
        \label{fig:bf_g=2_a=0.5}
        \end{subfigure}
    \hfill
    \begin{subfigure}[b]{0.3\textwidth}
        \centering
        \includegraphics[width=\textwidth]{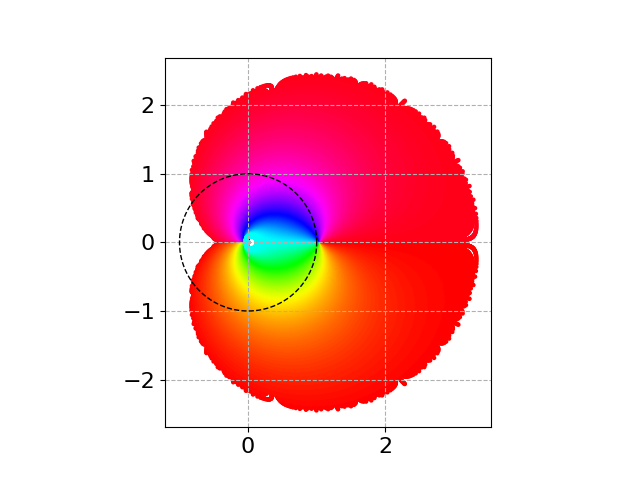}
        \caption{$\alpha=0.5,\;\g=3$}
        \label{fig:bf_g=3_a=0.5}
        \end{subfigure}
        \hfill
         \begin{subfigure}[b]{0.3\textwidth}
        \centering
        \includegraphics[width=\textwidth]{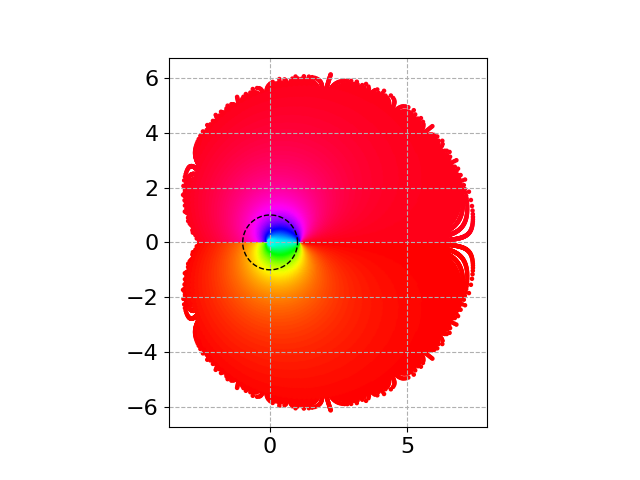}
        \caption{$\alpha=0.5,\;\g=5$}
        \label{fig:bf_g=5_a=0.5}
        \end{subfigure}
        \hfill
        \begin{subfigure}[b]{0.3\textwidth}
        \centering
        \includegraphics[width=\textwidth]{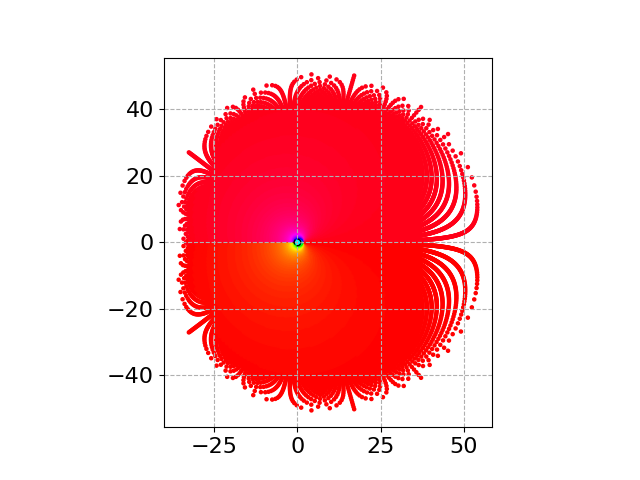}
        \caption{$\alpha=0.5,\;\g=10$}
        \label{fig:bf_g=10_a=0.5}
        \end{subfigure}
        \hfill
        \begin{subfigure}[b]{0.3\textwidth}
        \centering
        \includegraphics[width=\textwidth]{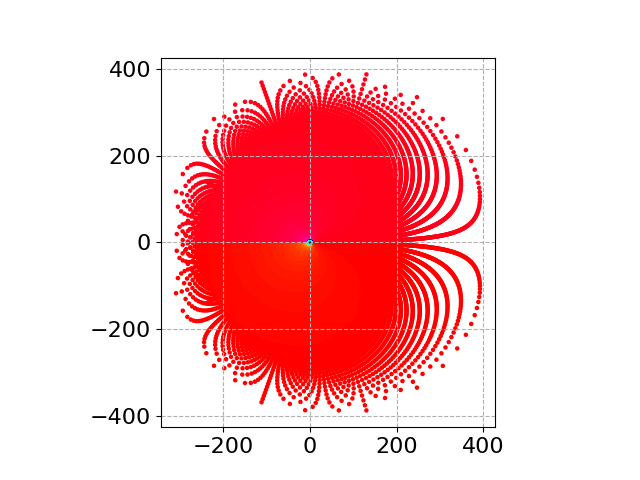}
        \caption{$\alpha=0.5,\;\g=15$}
        \label{fig:bf_g=15_a=0.5}
        \end{subfigure}
    \caption{Comparison of Berezin ranges of $C_{\phi_\alpha}$ for $\alpha=0.5$ in different spaces, viz. respectively in Hardy space, Bergman space and the other reproducing kernel Hilbert spaces.}
        \label{fig:comparison_a=0.5_g=1,2,3}
    \end{figure} 
    \begin{figure}[]
    \centering
        \begin{subfigure}{0.3\textwidth}
            \centering
            \includegraphics[width=\textwidth]{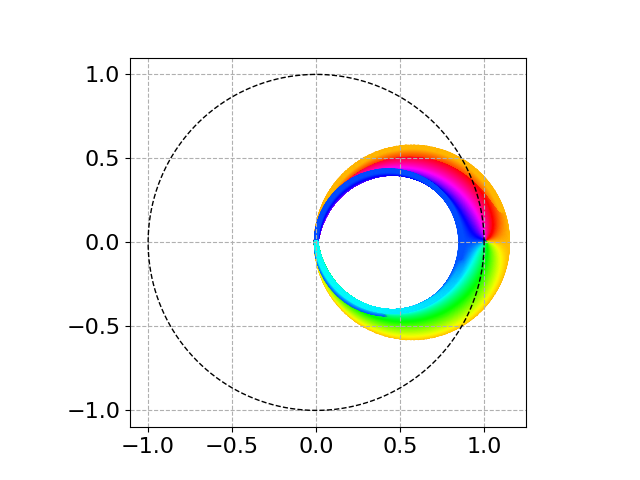}
            \caption{$\alpha=0.1+0.1i,\,\g=1$}
            \label{fig:bf_g=1_a=0.1+0.1i}
        \end{subfigure}
        \hfill
        \begin{subfigure}{0.3\textwidth}
            \centering
            \includegraphics[width=\textwidth]{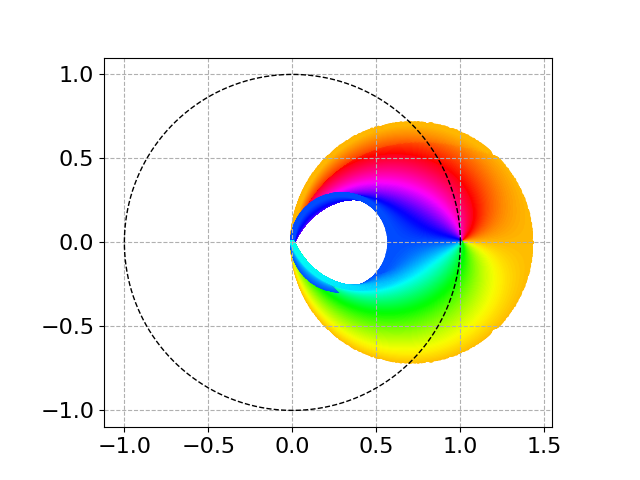}
            \caption{$\alpha=0.3+0.3i,\;\g=1$}
            \label{fig:bf_g=1_a=0.3+0.3i}
        \end{subfigure}
        \hfill
        \begin{subfigure}{0.3\textwidth}
            \centering
            \includegraphics[width=\textwidth]{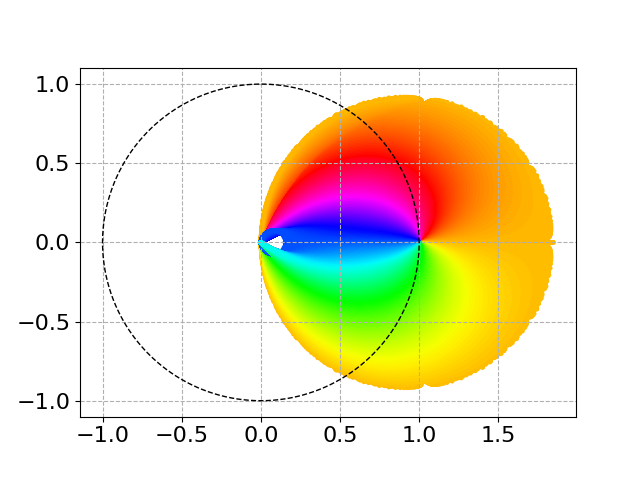}
            \caption{$\alpha=0.6+0.6i,\;\g=1$}
            \label{fig:bf_g=1_a=0.6+0.6i}
        \end{subfigure}      
    \caption{Comparison of Berezin ranges of $C_{\phi_\alpha}$ in Hardy space for different values of $\alpha$ when $\alpha$ varies along the line $y=x.$}
    \label{fig:comparison_g=1_line_y=x}
    \end{figure}

     \begin{figure}[]
    \centering
        \begin{subfigure}{0.3\textwidth}
              \centering
              \includegraphics[width=\textwidth]{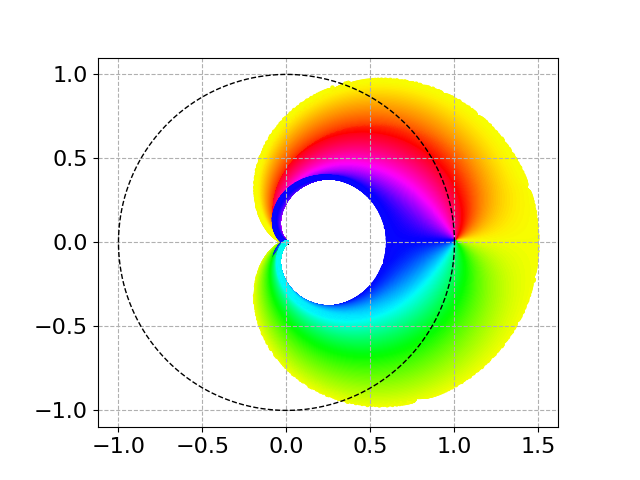}
              \caption{$\alpha=0.1+0.2i,\;\g=2$}
              \label{fig:bf_g=2_a=0.1+0.2i}
          \end{subfigure}  
          \hfill
          \begin{subfigure}{0.3\textwidth}
              \centering
              \includegraphics[width=\textwidth]{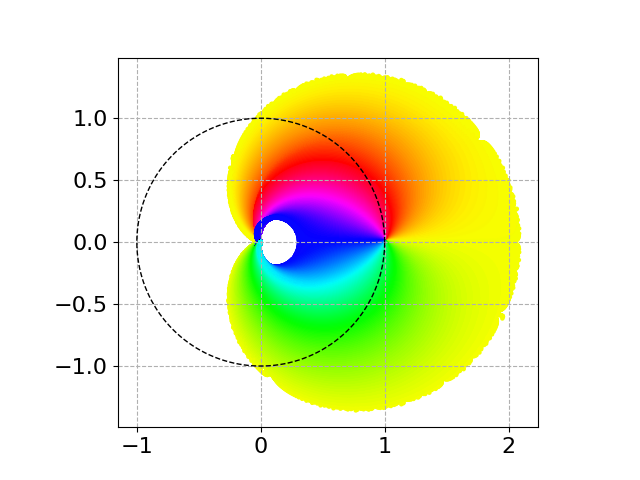}
              \caption{$\alpha=0.2+0.4i,\;\g=2$}
              \label{fig:bf_g=2_a=0.2+0.4i}
          \end{subfigure}
          \hfill
          \begin{subfigure}{0.3\textwidth}
              \centering
            \includegraphics[width=\textwidth]{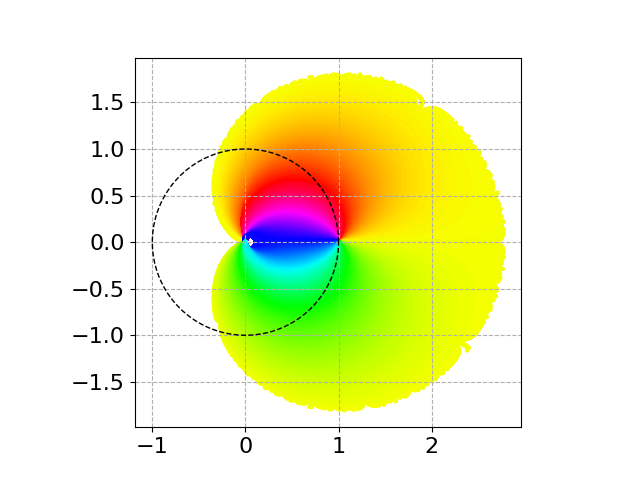}
            \caption{$\alpha=0.3+0.6i,\g=2$}
              \label{fig:bf_g=2_a=0.3+0.6i}
          \end{subfigure}
    \caption{Comparison of Berezin ranges of $C_{\phi_\alpha}$ in Bergman space for different values of $\alpha$ when $\alpha$ varies along the line $y=2x.$}
    \label{fig:comparison_g=2_line_y=2x}
    \end{figure}

\begin{figure}[]
\centering
\subfloat[$\alpha=-0.7,\;\gamma=1$]{%
\resizebox*{5cm}{!}{\includegraphics{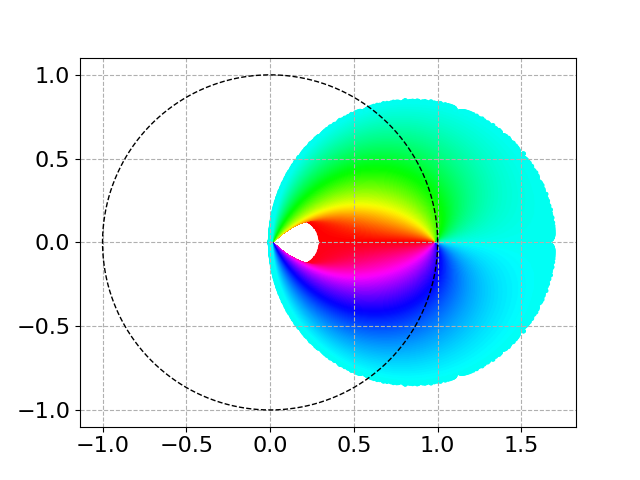}}}\hspace{5pt}
\subfloat[$\alpha=-0.3,\;\gamma=1$]{%
\resizebox*{5cm}{!}{\includegraphics{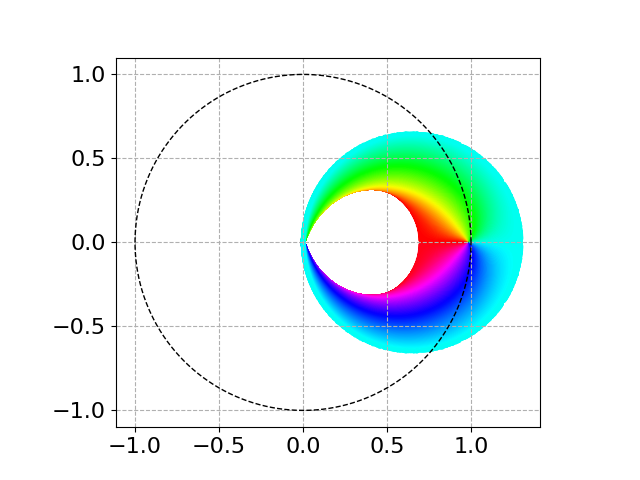}}}
\subfloat[$\alpha=-0.1,\;\gamma=1$]{%
\resizebox*{5cm}{!}{\includegraphics{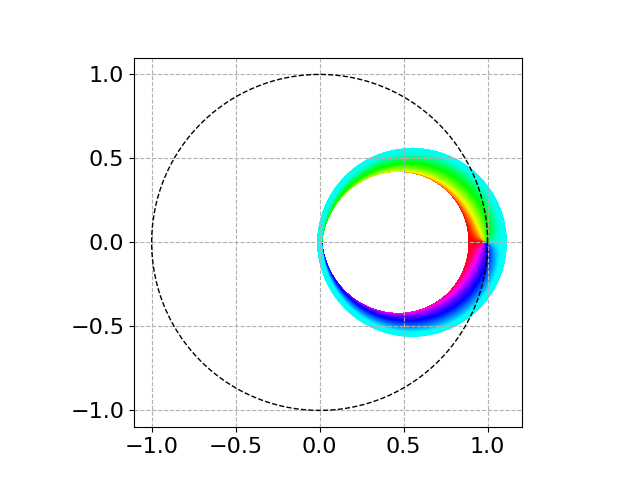}}}\hspace{5pt}

\subfloat[$\alpha=-10^{-3},\;\g=1$]{%
\resizebox*{5cm}{!}{\includegraphics{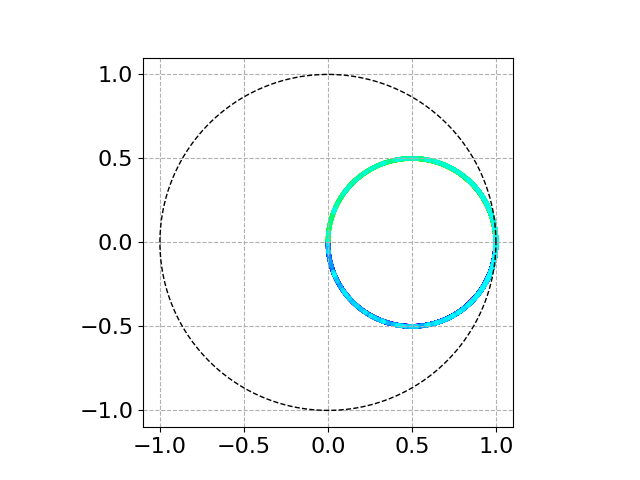}}}\hspace{5pt}
\subfloat[$\alpha=-10^{-5},\;\g=1$]{%
\resizebox*{5cm}{!}{\includegraphics{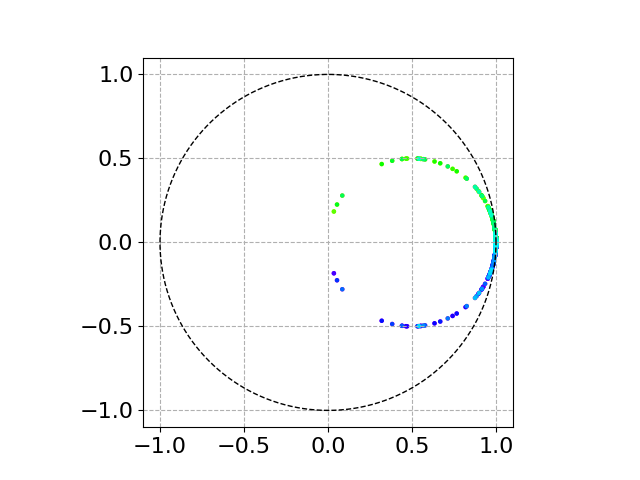}}}\hspace{5pt}
\subfloat[$\alpha=-10^{-6},\,\g=1$]{%
\resizebox*{5cm}{!}{\includegraphics{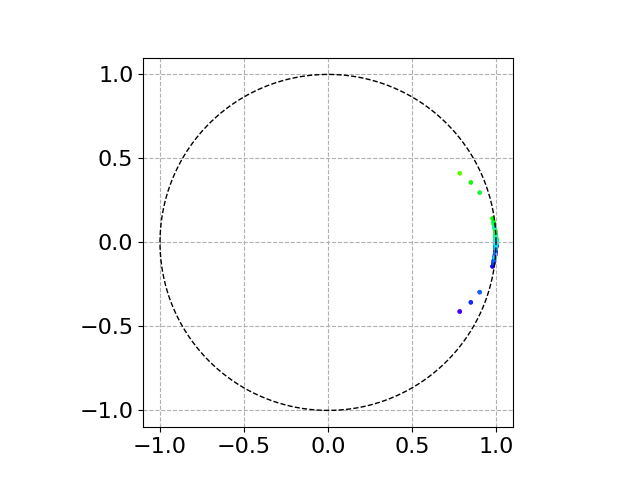}}}\hspace{5pt}

\subfloat[$\alpha=0,\;\g=1$]{%
\resizebox*{5cm}{!}{\includegraphics{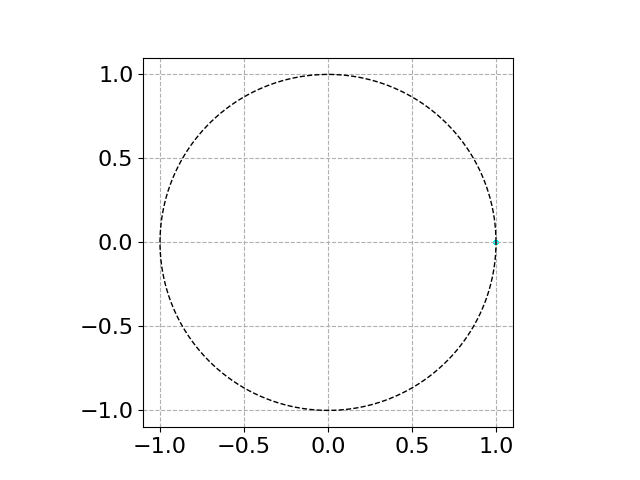}}}\hspace{5pt}

\subfloat[$\alpha=10^{-6},\;\g=1$]{%
\resizebox*{5cm}{!}{\includegraphics{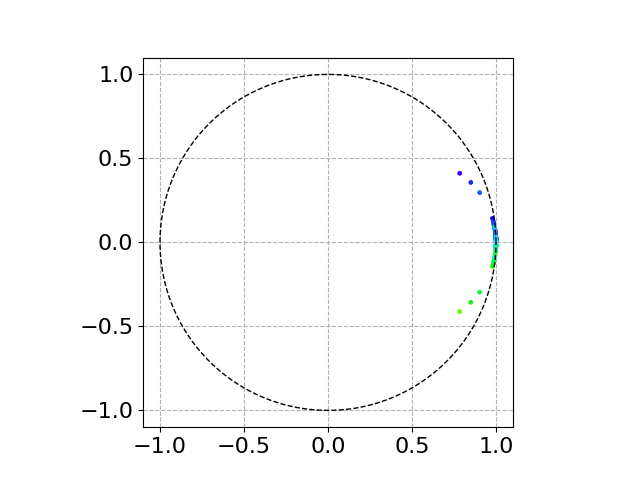}}}\hspace{5pt}
\subfloat[$\alpha=10^{-5},\;\g=1$]{%
\resizebox*{5cm}{!}{\includegraphics{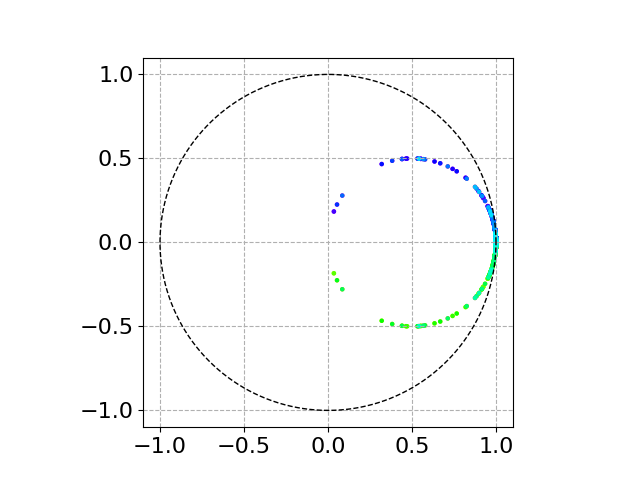}}}\hspace{5pt}
\subfloat[$\alpha=10^{-3},\;\g=1$]{%
\resizebox*{5cm}{!}{\includegraphics{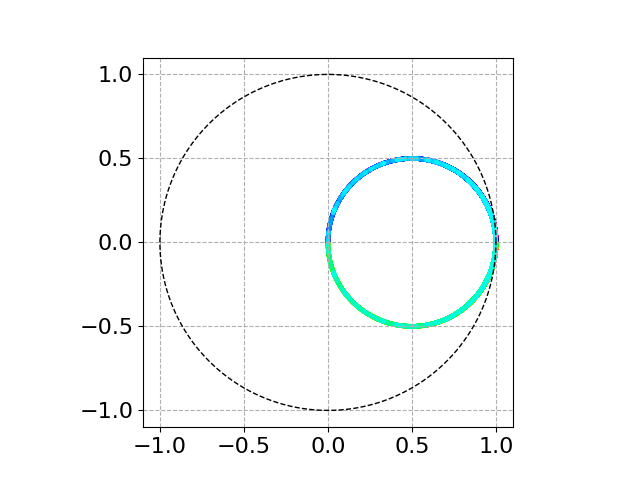}}}\hspace{5pt}

\subfloat[$\alpha=0.1,\;\g=1$]{%
\resizebox*{5cm}{!}{\includegraphics{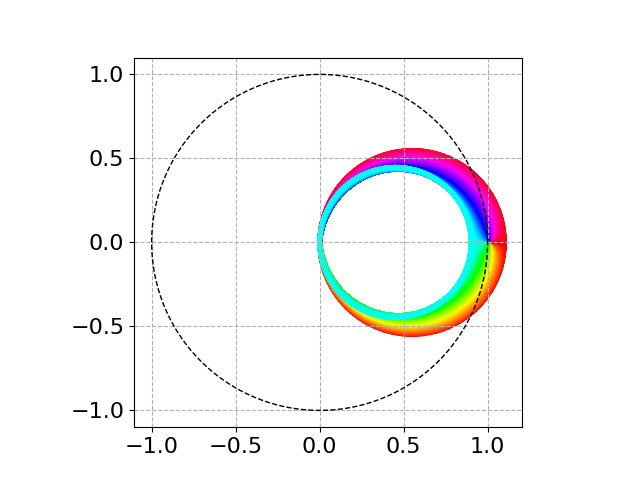}}}\hspace{5pt}
\subfloat[$\alpha=0.3,\;\g=1$]{%
\resizebox*{5cm}{!}{\includegraphics{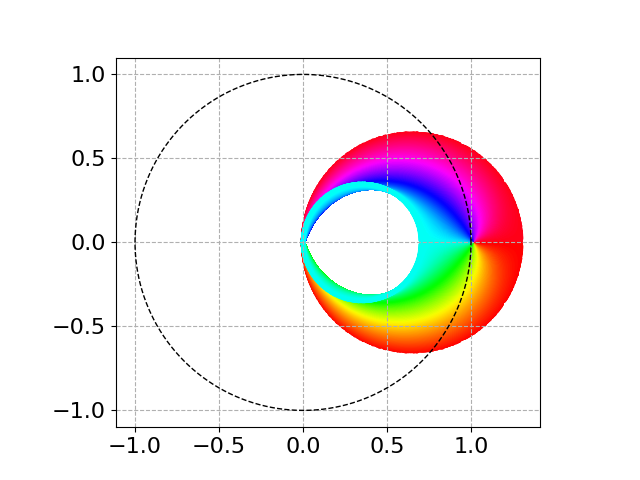}}}\hspace{5pt}
\subfloat[$\alpha=0.7,\;\g=1$]{%
\resizebox*{5cm}{!}{\includegraphics{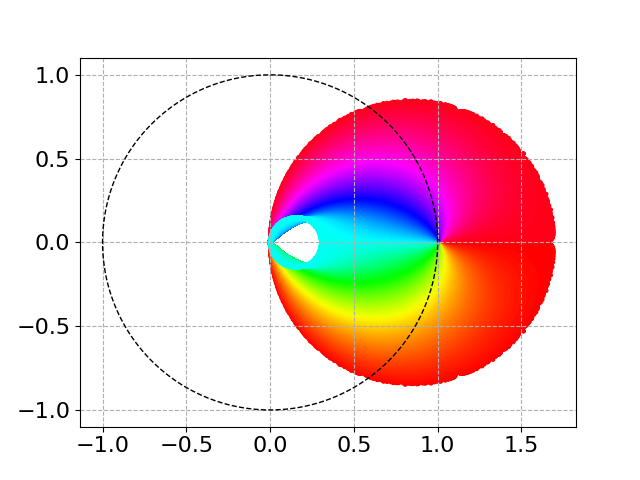}}}\hspace{5pt}

\caption{Comparison of Berezin ranges of $C_{\phi_\alpha}$ in Hardy space for different values of $\alpha$ when $\alpha$ varies along the real axis.}
\label{fig:comparison_g=1_a=real}
\end{figure}


\newpage
A brief summary of the complex symmetric structure of $D_{\mfn,\psi,\varphi}$ are described in {Table} 1.
\begin{table}[h]
    \centering
    \begin{tabular}{ |p{1.60cm}|p{6.00cm}|p{7.500cm}|}          
        \hline
        \textbf{Hilbert spaces} & \begin{center}
            \textbf{Unit disc} \( \mathbb{D} \)
        \end{center}   & \begin{center}
            \textbf{Unit polydisc} \( \mathbb{D}^d \)
        \end{center} \\
        \hline        
        \textcolor{purple}{Hardy space $\mathcal{H}^2(\mathbb{D})$} &$\psi(z)=\frac{1}{n!}az^{n}\big(1-z\varphi(0)\big)^{-n-1},$       $\varphi(z)=\varphi(0)+\frac{z\varphi^{\prime}(0)}{1-z\varphi(0)},$~~ 
where $a=\psi^n(0)$.   & $\psi(z)=a\prod_{j=1}^dz_j^{n_j}\big(1-z_j\varphi_j(0)\big)^{-n_j-1},$       $\varphi_t(z_t)=\varphi_t(0)+\frac{z_t\varphi_t^{\prime}(0)}{1-z_t\varphi_t(0)},~ \mbox{for}~ t=1, \dots, d,$ 
where $a=\partial^n\psi(0, \dots, 0)$.  \\
         \hline
        \textcolor{purple}{Bergman space $\mathcal{L}_a^2(\mathbb{D})$} &$\psi(z)=\frac{1}{n!}az^{n}\big(1-z\varphi(0)\big)^{-n-2},$       $\varphi(z)=\varphi(0)+\frac{z\varphi^{\prime}(0)}{1-z\varphi(0)},$~~~~  where $a=\psi^n(0)$.  &$\psi(z)=a\prod_{j=1}^dz_j^{n_j}\big(1-z_j\varphi_j(0)\big)^{-n_j-2},$       $\varphi_t(z_t)=\varphi_t(0)+\frac{z_t\varphi_t^{\prime}(0)}{1-z_t\varphi_t(0)},~ \mbox{for}~ t=1, \dots, d,$ 
where $a=\partial^n\psi(0, \dots, 0)$.
\\
         \hline
        \textcolor{purple}{Weighted Hardy space $\mathcal{H}_\gamma(\mathbb{D})$} &$ \psi(z)=az^{n}\big(1-z\varphi(0)\big)^{-\gamma-n},$ 
        $\varphi(z)=\varphi(0)+\frac{z\varphi^{\prime}(0)}{1-z\varphi(0)},$ 
        where $a=\frac{\psi^n(0)}{n!}$, $\gamma\in \mathbb{N}$.   & $ \psi(z)=a\prod_{j=1}^dz_j^{n_j}\big(1-z_j\varphi_j(0)\big)^{-\gamma-n_j},$ 
        $\varphi_t(z_t)=\varphi_t(0)+\frac{z_t\varphi_t^{\prime}(0)}{1-z_t\varphi_t(0)},~ \mbox{for}~ t=1, \dots, d,$ 
        where $a=\partial^n\psi(0, \dots, 0)$, $\gamma\in \mathbb{N}$.
 \\
        \hline
    \end{tabular}
    \caption{Comparison of complex symmetric structure of composition-differentiation operator with respect to
the conjugation $\mathcal{J}$ acting over the unit disc $\mathbb{D}$ and the polydisc $\mathbb{D}^d$.}
\end{table}

\section{Concluding remarks and future directions}

We conclude by remarking that the complex symmetric structure and self-adjoint property of generalized composition-differentiation operators over the unit polydisc and the unit disc with respect to different conjugations are investigated in this article. By employing similar analysis to different special classes of operators, it is possible to further study the complex symmetric structure of operators on any reproducing kernel Hilbert space (RKHS) of holomorphic functions. Moreover, one can explore the convexity property in other classes of operators with suitable reproducing kernel Hilbert spaces.

\vspace{.3cm}
	\noindent
	{\small {\bf Acknowledgments.}\\
	 The second author is supported by the Institute Postdoctoral Fellowship, IIT Bhubaneswar (F. 15-12/2024-Acad/SBS5-PDF-01).
	}

\subsection*{Declarations}
\begin{itemize}
\item {\bf{Data availability statements}}: Data sharing not applicable to this article as no datasets were generated or analysed during the current study.
\item {\bf{Competing interests}}: The authors have no relevant financial or non-financial interests to disclose.
\item {\bf{Funding}}: Not applicable.
\item {\bf{Authors' contributions}}: Authors declare they have contributed equally to this paper. Both the authors have read and approved this version.
\end{itemize}





\begin{thebibliography}{widestlabel}
	
	\bibitem{Allu_Haldar_Pal_2023}  {\sc V. Allu, H. Haldar} and {\sc S. Pal}, Composition-differentiation operator on weighted
	Bergman spaces, (2023) arXiv:2301.08575v1.
	
	\bibitem{Ahamed_Allu_Rahman_2023} {\sc M. B. Ahamed, V. Allu} and {\sc T. Rahman}, Generalized weighted composition operators on the Hardy space $ \mathcal{H}^2(\mathbb {D}^n)$, (2023) arXiv preprint arXiv:2312.01615
	
	\bibitem{Ahamed_Rahman_2023} {\sc M. B. Ahamed} and {\sc T. Rahman}, Generalized weighted composition-differentiation operators on weighted Bergman spaces, (2023) arXiv preprint arXiv:2308.13197
	
	
	\bibitem{Augustine_Garayev_Shankar_CAOT_2023} {\sc A. Augustine, M. Garayev} and {\sc P. Shankar}, Composition operators, convexity of their Berezin range and related questions, \textit{Complex Anal. Oper. Theory} {\bf 17} (8) (2023) Paper No. 126, 22 pp.
	
	
	\bibitem{Axler_Zheng_1998}  {\sc S. Axler} and {\sc D. Zheng}, Compact operators via the Berezin transform, \textit{Indiana Univ. Math. J.} {\bf 47} (2) (1998), 387--400.
	
	
	\bibitem{Berger_Coburn_1986}  {\sc C. A. Berger} and {\sc L. A. Coburn}, A symbol calculus for Toeplitz operators, \textit{Proc. Nat. Acad. Sci. U. S. A.}, {\bf 83} (10) (1986), 3072--3073.
	
\bibitem{Bhuia_Pradhan_Sarkar_2025} {\sc S. Bhuia, D. Pradhan}, and {\sc J. Sarkar}, Characterizations of complex symmetric Toeplitz operators, \textit{Bull. Sci. Math.} {\bf 199} (2025), 103578.	
    
    \bibitem{Bourdon_Shapiro_IEOT_2002} {\sc P. S. Bourdon} and {\sc J. H. Shapiro}, When is zero in the numerical range of a composition operator?, \textit{Integr. Equ. Oper. Theory} {\bf 44} (4) (2002),  410--441.
	
	\bibitem{Cowen_MacCluer_1995} {\sc C. C. Cowen} and {\sc B. D. MacCluer}, \textit{Composition operators on spaces of analytic functions}, Studies in Advanced Mathematics, CRC Press, Boca Raton, FL, 1995.
	
\bibitem{CowenFelder}  {\sc C. C. Cowen} and {\sc C. Felder}, Convexity of the Berezin range, \textit{Linear Algebra Appl.} {\bf647} (2022), 47--63.


 

\bibitem{Datt_Jain_Ohri_Filomat_2020} {\sc G. Datt, M. Jain} and {\sc N. Ohri}, On weighted generalized composition operators on weighted Hardy spaces, \textit{Filomat} {\bf34} (2020), 1689--1700. 





\bibitem{Fatehi_Hammond_CAOT_2021} {\sc M. Fatehi} and {\sc C. N. B. Hammond}, Normality and self-adjointness of weighted composition-differentiation operators, \textit{Complex Anal. Oper. Theory} {\bf 15} (1) (2021), Paper No. 9, 13 pp.

\bibitem{Fatehi_Hammond_PAMS_2020} {\sc M. Fatehi} and {\sc C. N. B. Hammond}, Composition-differentiation operators on the Hardy space, \textit{Proc. Amer. Math. Soc.} {\bf 148} (7) (2020), 2893--2900. 





\bibitem{Garcia_Prodan_Putinar_2014}  {\sc S. R. Garcia, E. Prodan} and {\sc M. Putinar}, Mathematical and physical aspects of complex symmetric operators, \textit{J. Phys. A} {\bf 47} (2014), 353001, 54 pp.


 \bibitem{Garcia_CM_2006} {\sc S. R. Garcia}, Conjugation and Clark Operators, \textit{Contemp. Math.} {\bf 393} (2006), 67--111.


\bibitem{Garcia_Putinar_2006} {\sc S. R. Garcia} and {\sc M. Putinar}, Complex symmetric operators and applications, \textit{Trans. Amer. Math. Soc.} {\bf 358} (2006), 1285--1315.

\bibitem{Garcia_Putinar_2007} {\sc S. R. Garcia} and {\sc M. Putinar}, Complex symmetric operators and applications II, \textit{Trans. Amer. Math. Soc.} {\bf 359} (2007), 3913--3931.

\bibitem{Garcia_Hammond} {\sc S. R. Garcia} and {\sc C. Hammond}, Which weighted composition operators are complex symmetric?,  \textit{Oper. Theory Adv. Appl.} {\bf236} (2014), 171--179.

\bibitem{Gunatillake_Jovovic_Smith_JMAA_2014} {\sc G. Gunatillake, M. Jovovic} and {\sc W. Smith}, Numerical ranges of weighted composition operators, \textit{J. Math. Anal. Appl.} {\bf 413} (1) (2014), 458--475.


\bibitem{Gustafson_PAMS_1970} {\sc K. Gustafson}, The Toeplitz-Hausdorff theorem for linear operators, \textit{Proc. Amer. Math. Soc.} {\bf 25} (1970), 203--204.

\bibitem{Hai_PA_2020}{\sc P. V. Hai}, Unbounded weighted composition operators on Fock space, \textit{ Potential Analysis}, {\bf 53} (1) (2020), 1--21.

\bibitem{Hai_Khoi_JMAA_2016} {\sc P. V. Hai} and {\sc L. H. Khoi}, Complex symmetry of weighted composition operators on the Fock space, \textit{J. Math. Anal. Appl.} {\bf 433} (2016), 1757--1771.
\bibitem{Hai_Khoi_CVE_2018} {\sc P. V. Hai} and {\sc L. H. Khoi}, Complex symmetric weighted composition operators on the Fock space in several variables, \textit{Complex Var. Elliptic Equ.} {\bf 63} (2018), 391--405.


\bibitem{Hai_Putinar_JDE_2018} {\sc P. V. Hai} and {\sc M. Putinar}, Complex symmetric differential operators on Fock space, \textit{ J. Differ. Equ.} {\bf 265} (9) (2018), 4213--4250.


\bibitem{Han_Wang_JMAA_2021} {\sc K. Han} and {\sc M. Wang}, Weighted composition-differentiation operators on the Hardy space, \textit{Banach J. Math. Anal.} {\bf 15} (2021), 44.


\bibitem{Han_Wang_CAOT_2021} {\sc K. Han} and {\sc M. Wang}, Weighted composition-differentiation operators on the Bergman space, \textit{Complex Anal. Oper. Theory} {\bf 15} (5) (2021), 89. 




\bibitem{Hibschweiler_Portnoy_Rocky_2005} {\sc R. A. Hibschweiler} and {\sc N. Portnoy}, Composition followed by differentiation between Bergman and Hardy spaces, \textit{Rocky Mountain J. Math.} {\bf 35} (3) (2005), 843--855. 



\bibitem{Jung_Kim_Ko_Lee_JFAA_2014} {\sc S. Jung, Y. Kim, E. Ko} and {\sc J. E. Lee}, Complex symmetric weighted composition operators on $\mathcal{H}^2(\mathbb{D})$, \textit{J. Funct. Anal.} {\bf 267} (2014), 323--351.

\bibitem{Karaev}  {\sc M. T. Karaev}, Reproducing kernels and Berezin symbols techniques in various questions of operator theory, \textit{Complex Anal. Oper. Theory} {\bf 7}(4) (2013), 983--1018.


\bibitem{Le_JMAA_2012} {\sc T. Le},  Self-adjoint, unitary, and normal weighted composition operators in several variables, \textit{J. Math. Anal. Appl.} {\bf 395} (2012), 596--607. 

\bibitem{Lim_Khoi_JMAA_2018} {\sc R. Lim} and {\sc L. H. Khoi}, Complex symmetric weighted composition operators on $\mathcal{H}_\gamma(\mathbb{D})$, \textit{J. Math. Anal. Appl.} {\bf 464} (2018), 101--118.

\bibitem{Liu_Ponnusamy_Xie_LAMA_2023} {\sc J. Liu, S. Ponnusamy} and {\sc H. Xie}, Complex symmetric weighted composition-differentiation operators, \textit{Linear Multilinear Algebra} {\bf 71} (5) (2023),  737--755.



\bibitem{Lo_Loh_JMAA_2023} {\sc C. Lo} and {\sc A. W. Loh}, Complex symmetric generalized weighted composition operators on Hilbert spaces of analytic functions, \textit{J. Math. Anal. Appl.} {\bf 523} (2) (2023), 127141,  25 pp.

\bibitem{Mleczko_Szwedek_2018} {\sc P. Mleczko} and {\sc R. Szwedek}, Complex symmetric operators and interpolation, \textit{J. Math. Anal. Appl.} {\bf 462} (2018), 210--215.


\bibitem{Nordgren_Rosenthal_1992} {\sc E. Nordgren} and {\sc P. Rosenthal}, Boundary values of Berezin symbols. Nonselfadjoint operators and related topics (Beersheva, 1992),  \textit{Oper. Theory Adv. Appl.} {\bf 73} Birkhäuser Verlag, Basel (1994), 362--368.



\bibitem{Ohno_BAMS_2006} {\sc S. Ohno}, Products of composition and differentiation between Hardy spaces,  \textit{Bull. Aust. Math. Soc.} {\bf 73} (2006), 235--243.


\bibitem{Paulsen_Raghupathi_2016} {\sc V. I. Paulsen} and {\sc M. Raghupathi},  \textit{An introduction to the theory of reproducing kernel Hilbert spaces}, Cambridge Stud. Adv. Math., Cambridge University Press, 2016.

\bibitem{Shapiro_1993} {\sc J. H. Shapiro}, Composition operators and classical function theory, Springer-Verlag, New York, 1993.

\bibitem{Stević_BBMS_2009} {\sc S. Stevi$\acute{\mbox{c}}$}, Products of composition and differentiation operators on the weighted Bergman space, \textit{Bull. Belg. Math. Soc. Simon Stevin} {\bf 16} (2009), 623--635. 


\bibitem{Zhu_NFAO_2009} {\sc X. Zhu}, Generalized weighted composition operators on weighted Bergman spaces, \textit{Numer. Funct. Anal. Optim.} {\bf 30} (2009), 881--893. 
\end{thebibliography}
\end{document}